\documentclass[12pt,reqno]{amsart}
\usepackage{amsmath,amssymb,amsthm,mathrsfs}
\usepackage{setspace}
\usepackage{indentfirst}
\usepackage{enumerate}
\usepackage{float}
\usepackage{cite}
\usepackage{xcolor}
\usepackage{array}
\usepackage{booktabs}
\usepackage{tikz}
\usepackage{pgfplots}
\usepackage[bookmarksnumbered,colorlinks,plainpages]{hyperref}
\usepackage[most]{tcolorbox}
\usetikzlibrary{positioning,calc,fit,backgrounds,decorations.pathreplacing,arrows.meta}

\newtheorem*{theoA}{Theorem A}
\newtheorem*{theoB}{Theorem B}
\newtheorem*{theoC}{Theorem C}
\newtheorem*{theoD}{Theorem D}
\newtheorem*{theoE}{Theorem E}
\newtheorem*{theoF}{Theorem F}
\newtheorem*{theoG}{Theorem G}

\newtheorem*{cor A}{Corollary A}
\newtheorem*{cor B}{Corollary B}

\newtheorem{theo}{Theorem}[section]
\newtheorem{lem}{Lemma}[section]

\newtheorem{defi}{Definition}[section]

\newcommand{\ol}{\overline}
\newcommand{\be}{\begin{equation}}
	\newcommand{\ee}{\end{equation}}
\newcommand{\beas}{\begin{eqnarray*}}
	\newcommand{\eeas}{\end{eqnarray*}}
\newcommand{\bea}{\begin{eqnarray}}
	\newcommand{\eea}{\end{eqnarray}}

\numberwithin{equation}{section}

\begin{document}
\title[O\MakeLowercase {n a class of Pluriharmonic}......]{\LARGE O\Large\MakeLowercase {n a class of Pluriharmonic Mappings in the unit polydisk}}

\date{}
\author[S. M\MakeLowercase{ajumder}, G. H\MakeLowercase{aldar}, A. B\MakeLowercase{anerjee} \MakeLowercase{and} S. P\MakeLowercase{anja}]{S\MakeLowercase{ujoy} M\MakeLowercase{ajumder}$^{1}$, G\MakeLowercase{outam} H\MakeLowercase{aldar}$^2$ $^*$, A\MakeLowercase{bhijit} B\MakeLowercase{anerjee}$^3$ \MakeLowercase{and} S\MakeLowercase{hantanu} P\MakeLowercase{anja}$^4$}

\address{$^{1}$ Sujoy Majumder, Department of Mathematics, Raiganj University, Raiganj, West Bengal-733134, India.}
\email{sm05math@gmail.com, sjm@raiganjuniversity.ac.in}

\address{$^2$ Goutam Haldar, Department of Mathematics, Ghani Khan Choudhury Institute of Engineering and Technology, Narayanpur, Malda-732141, West Bengal, India.}
\email{goutamiitm@gmail.com, goutamiit1986@gmail.com}

\address{$^{3}$ Department of Mathematics, University of Kalyani, West Bengal 741235, India.}
\email{abanerjee\_kal@yahoo.co.in, abhijitbanerjee@klyuniv.ac.in}

\address{$^4$ Shantanu Panja, Department of Mathematics, University of Kalyani, West Bengal 741235, India.}
\email{panjasantu07@gmail.com}

\renewcommand{\thefootnote}{}
\footnote{2010 \emph{Mathematics Subject Classification}: 32A10, 30C45, 30C62, 30C75.}
\footnote{\emph{Key words and phrases}: Pluriharmonic mappings, polydisk, Several complex variables, sharp coefficient bounds and growth estimates.}
\footnote{*\emph{Corresponding Author}: Goutam Halder.}

\renewcommand{\thefootnote}{\arabic{footnote}}
\setcounter{footnote}{0}

\begin{abstract}
	In this paper, we introduce and study the class
	$\mathcal{W}_{\mathcal{H}_n^0}(\alpha)$
	of normalized pluriharmonic mappings, characterized by a suitable bound on their second-order partial derivatives. We establish a one-to-one correspondence between this pluriharmonic class and an associated class of holomorphic functions, thereby extending a result of Ghosh and Vasudevarao \cite{Ghosh-Allu-2019} to the setting of several complex variables. Furthermore, we obtain sharp coefficient bounds, growth estimates and a convex combination theorem for functions in
	$\mathcal{W}_{\mathcal{H}_n^0}(\alpha)$.
	Finally, we introduce sections (partial sums) of pluriharmonic mappings and investigate their properties for functions belonging to
	$\mathcal{W}_{\mathcal{H}_n^0}(\alpha)$.
\end{abstract}
\thanks{Typeset by \AmS -\LaTeX}
\maketitle

\section{\bf Introduction}
The study of pluriharmonic mappings lies at the intersection of complex analysis, complex geometry and geometric function theory. These mappings preserve many of the important geometric properties of holomorphic functions while offering greater flexibility through their co-holomorphic part. This makes them a useful tool for extending many classical results, such as coefficient estimates, distortion theorems, univalence criteria and Landau--Bloch type theorems, to the setting of several complex variables. In addition, pluriharmonic mappings have important applications in the study of minimal surfaces, complex manifolds, elliptic partial differential equations and quasiconformal mappings. Because of their rich geometric structure and wide range of applications, they continue to attract considerable attention in modern complex analysis.\vspace{1.2mm}

In modern theoretical physics (especially string theory, general relativity, and quantum field theory), physical configurations are often modeled as geometric maps between manifolds. When these manifolds carry complex structures, pluriharmonic mappings and their geometric counterpart, pluriharmonic maps, serve as the foundational mathematical language. Pluriharmonic maps act as critical points of the energy functional on these complex domains. Because pluriharmonic functions can be locally split into the sum of a holomorphic and an anti-holomorphic function ($f = h + \bar{g}$), they allow physicists to classify stable, minimum-energy field configurations (solitons or instantons) in higher dimensions.\vspace{1.2mm}

\subsection{\bf {Basic ideas of pluriharmonic mapping}}
We write $z_j = x_j + i y_j\;(i^2 = -1,\; j = 1,\dots,n)$,
where $x_j$ and $y_j$ are real numbers. We set
$f(z) = u(x,y) + i v(x,y)$,
where $u(x,y)$ and $v(x,y)$ are the real and imaginary parts of $f(z)$; $x = (x_1,\dots,x_n)$ and $y = (y_1,\dots,y_n)$.
The Cauchy--Riemann equations for each $z_j\;(j=1,\dots,n)$ are
\begin{align}\label{Eq 1.1}
	\frac{\partial u}{\partial x_j}
	=
	\frac{\partial v}{\partial y_j}\quad \text{and}\quad
	\frac{\partial u}{\partial y_j}
	=
	-\,\frac{\partial v}{\partial x_j}
	\qquad (j=1,\dots,n).
\end{align}

By differentiating (\ref{Eq 1.1}) with respect to $x_k$ and $y_k$, we see that both $u$ and $v$ satisfy the
following system of partial differential equations of second order:
\begin{align}\label{Eq 1.2}
	\frac{\partial^2}{\partial x_j \partial x_k}
	+
	\frac{\partial^2}{\partial y_j \partial y_k}
	= 0
	\quad \text{and} \quad
	\frac{\partial^2}{\partial x_j \partial y_k}
	-
	\frac{\partial^2}{\partial x_k \partial y_j}
	= 0
	\qquad (j,k = 1,\dots,n).
\end{align}

For a complex variable $z_j = x_j + i y_j$, we define
\begin{align}\label{Eq 1.3}
	\frac{\partial}{\partial z_j}
	= \frac{1}{2}\left( \frac{\partial}{\partial x_j}
	- i \frac{\partial}{\partial y_j} \right),\quad
	\frac{\partial}{\partial \bar z_j}
	= \frac{1}{2}\left( \frac{\partial}{\partial x_j}
	+ i \frac{\partial}{\partial y_j} \right).
\end{align}
\begin{align*}
	\partial =\sum\limits_{j=1}^n \frac{\partial }{\partial z_j} d z_j,\quad \ol{\partial} =\sum\limits_{j=1}^n \frac{\partial }{\partial \ol z_j} d \ol {z}_j \quad \text{and} \quad d=\partial+\ol{\partial}.
\end{align*}

We know that a function $f$ defined on an open subset $U\subset \mathbb{R}^n$ is said to be of $C^k$-class if $f$ is $k$-times continuously differentiable.

Let $f(z) = u(x,y) + i v(x,y)\;(x = (x_1,\dots,x_n), y = (y_1,\dots,y_n))$, where both $u$ and $v$ are of $C^2$-class. A direct calculation on (\ref{Eq 1.3}) shows that 
\begin{align}\label{Eq 1.5}
	4\frac{\partial^2 f(z)}{\partial \bar z_j \partial z_k}=&\frac{\partial^2 u(x,y)}{\partial x_j x_k}+\frac{\partial^2 u(x,y)}{\partial y_j y_k}+i\left(\frac{\partial^2 v(x,y)}{\partial x_j x_k}+\frac{\partial^2 v(x,y)}{\partial y_j y_k}\right)\\&-i\left(\frac{\partial^2 u(x,y)}{\partial x_j y_k}-\frac{\partial^2 u(x,y)}{\partial x_k y_j}\right)+\left(\frac{\partial^2 v(x,y)}{\partial x_j y_k}-\frac{\partial^2 v(x,y)}{\partial x_k y_j}\right).\nonumber
\end{align}

A function $f:\Omega \to \mathbb{C}$ defined on an open set $\Omega\subset \mathbb{C}^n$ is said to be holomorphic if $f$ is of $C^1$-class and satisfies
\begin{align}\label{Eq 1.4}
\ol{\partial} f=0,\;\; \text{i.e.},\;\; \frac{\partial f(z)}{\partial \bar z_j}=0\quad \text{on}\;\;\Omega\;\;\text{ for all j}.
\end{align}

\subsection*{{\bf Pluriharmonic mapping}}
A real-valued function $\phi(x,y)$, where $x = (x_1,\dots,x_n)$ and $y = (y_1,\dots,y_n)$ is \emph{pluriharmonic} if it satisfies the conditions  (\ref{Eq 1.2}). Thus a continuous complex-valued function $f(z)=u(x,y)+iv(x,y)$, where
$x = (x_1,\dots,x_n)$ and $y = (y_1,\dots,y_n)$ is a complex-valued \emph{pluriharmonic} function in a domain $\Omega\subset \mathbb{C}^n$, if both $u(x,y)$ and $v(x,y)$ are real-valued \emph{pluriharmonic} functions in $\Omega$.
If $u(x,y)$ and $v(x,y)$, where $x = (x_1,\dots,x_n)$ and $y = (y_1,\dots,y_n)$ satisfy (\ref{Eq 1.1}), then we call $v(x,y)$, a \emph{pluriharmonic conjugate} of $u(x,y)$.

\par Thus for functions $f(z)=u(x,y)+iv(x,y)$, where
$x = (x_1,\dots,x_n)$ and $y = (y_1,\dots,y_n)$ with continuous second order partial derivatives, it is clear from (\ref{Eq 1.5}) and (\ref{Eq 1.4}) that $\frac{\partial f(z)}{\partial z_j}$ is holomorphic on $\Omega$ for all $j$ if  $f(z)$ is pluriharmonic function.
\vspace{1.5mm}
\par In a simply connected domain $\Omega\subset \mathbb{C}^n$, let $f(z)$ be a complex-valued pluriharmonic function. We recall that $\frac{\partial f(z)}{\partial z_j}$ is holomorphic on $\Omega$ for all $j=1,2,\ldots,n$ if $f(z)$ is pluriharmonic and let $\frac{\partial h(z)}{\partial z_j}=\frac{\partial f(z)}{\partial z_j}\;(j=1,2,\ldots,n)$, where $h(z)$ is holomorphic in $\Omega$. Now let $g(z)=\ol {f(z)}-\ol {h(z)}$ and we observe that
\[\frac{\partial g(z)}{\partial \ol{z_j}}=\ol{\frac{\partial f(z)}{\partial z_j}}-\ol{\frac{\partial h(z)}{\partial z_j}}=0\quad \text{in}\;\Omega,\quad j=1,2,\ldots,n\]
by the definition of $h$. Thus $g(z)$ is holomorphic in $\Omega$. Therefore the pluriharmonic function $f(z)$ has the representation $f(z)=h(z)+\ol{g(z)}$, where $h(z)$ and $g(z)$ are holomorphic in $\Omega$. \vspace{1.2mm}
\begin{figure}[t]
	\centering
	
	\begin{tikzpicture}[
		scale=0.9,          
		transform shape,    
		every node/.style={font=\small},
		>=Stealth
		]

		\node[
		circle,
		draw,
		thick,
		fill=blue!12,
		minimum size=2.8cm,
		align=center,
		font=\bfseries
		] (P) {Pluriharmonic\\Mappings};
		
		\node[
		draw,
		rounded corners,
		fill=green!8,
		minimum width=3.3cm,
		minimum height=0.9cm
		] (H) at (90:4.2) {Classical Harmonic Theory};
		
		\node[
		draw,
		rounded corners,
		fill=orange!10,
		minimum width=3.2cm,
		minimum height=0.9cm
		] (A) at (25:4.2) {Holomorphic Mappings};
		
		\node[
		draw,
		rounded corners,
		fill=yellow!12,
		minimum width=3.3cm,
		minimum height=0.9cm
		] (G) at (-35:4.2) {Complex Geometry};
		
		\node[
		draw,
		rounded corners,
		fill=cyan!10,
		minimum width=3.6cm,
		minimum height=0.9cm
		] (U) at (-90:4.2) {Univalence \& Distortion};
		
		\node[
		draw,
		rounded corners,
		fill=red!8,
		minimum width=3.7cm,
		minimum height=0.9cm
		] (L) at (-145:4.2) {Landau--Bloch Theory};
		
		\node[
		draw,
		rounded corners,
		fill=purple!8,
		minimum width=3.6cm,
		minimum height=0.9cm
		] (F) at (155:4.2) {Geometric Function Theory};

		\draw[->,thick] (H)--(P);
		
		\draw[->,thick] (A)--(P);
		
		\draw[->,thick] (G)--(P);
		
		\draw[->,thick] (U)--(P);
		
		\draw[->,thick] (L)--(P);
		
		\draw[->,thick] (F)--(P);
		
		
		\draw[dashed,gray]
		(0,0) circle (4.2);
		
	\end{tikzpicture}
	
	\caption{
		Position of pluriharmonic mappings within several complex variables.
	}
	
	\label{Fig:ConceptMap}
	
\end{figure}\par 
In one complex variable, harmonic functions are closely related to analytic functions, since every harmonic function can locally be expressed as the real part of an analytic function. However, when one moves to higher dimensions, ordinary harmonicity is no longer sufficient to capture the rich complex structure of $\mathbb{C}^n$. This limitation motivates the introduction of pluriharmonic functions, namely functions whose restrictions to every complex line are harmonic. Pluriharmonicity thus preserves the intrinsic geometry of complex spaces and provides a natural framework for extending many classical results of geometric function theory to several complex variables.

Figure~\ref{Fig:ConceptMap} highlights the central role of pluriharmonic mappings in several complex variables. They provide a natural bridge between classical harmonic analysis, holomorphic mappings, complex geometry and geometric function theory, thereby serving as an effective framework for extending many fundamental results from one complex variable to higher dimensions.\par 
From the perspective of geometric function theory, pluriharmonic functions constitute an important class of mappings that preserve many of the geometric features of holomorphic functions while offering greater flexibility. In particular, mappings of the form $f(z)=h(z)+\ol{g(z)}$,
where $h$ and $g$ are holomorphic functions of several complex variables, are pluriharmonic. Such mappings have attracted significant attention in recent years due to their wide-ranging applications in the study of univalence, distortion and covering theorems, Schwarz-type lemmas, and Landau-Bloch-type results in higher-dimensional complex spaces.

\smallskip
Pluriharmonic functions are also closely related to pseudoconvex domains, which are fundamental objects in the theory of several complex variables. Since every pluriharmonic function is both plurisubharmonic and plurisuperharmonic, it represents the \emph{flat} case of complex potential theory. This connection provides valuable geometric and analytic tools for investigating the structure of complex domains and the behavior of holomorphic and harmonic mappings.

\smallskip
Consequently, pluriharmonic functions occupy a central position in modern geometric function theory. They extend the notion of harmonicity to several complex variables, bridge the analytic and geometric aspects of complex mappings, and furnish a natural framework for generalizing many classical results from the complex plane to higher-dimensional complex spaces. Their study remains an active area of research, with significant developments in the theory of univalent mappings, invariant metrics, complex geometry, and multidimensional harmonic analysis.

\subsection{{\bf Basic Notations in several complex variables}}
Let $\mathbb{P}\Delta(a;r)=\lbrace z\in\mathbb{C}^n: |z_i-a_i|<r_i,\;i=1,2,\ldots,n\rbrace$ denote the open polydisc in $\mathbb{C}^n$. Hrere $a=(a_1,\ldots,a_n)\in\mathbb{C}^n$ is called the centre of the polydisc and $r=(r_1,r_2,\ldots,r_n)\in\mathbb{R}^n\;(r_i>0)$ is called the polyradius. In particular $\mathbb{P}\Delta(0;1)=\mathbb{P}\Delta(0_n;1_n)$, where $0_n=(0,0,\ldots,0)$ and $1_n=(1,1,\ldots,1)$. This polydisc is called the unit polydisc in $\mathbb{C}^n$. The unit disk in the complex plane is denoted by $\mathbb{D}$.

The absolute value of a complex number $z_1$ is denoted by $|z_1|$ and for $z=(z_1,\ldots,z_n)\in\mathbb{C}^n$, we define
\begin{align*}
	||z||^2=\sum\limits_{k=1}^n|z_k|^2\quad \text{and}\quad\|z\|_{\infty}=\max\limits_{1\leq i\leq n}|z_i|.
\end{align*}

\smallskip
A multi-index $\beta=(\beta_1,\ldots,\beta_n)$ of dimension $n$ consists of n non-negative integers $\beta_j,\;1\leq j\leq n$; the degree of a multi-index $\beta$ is the sum $|\beta|=\sum_{j=1}^n \beta_j$ and we denote $\beta!=\beta_1!\ldots \beta_n!$. We denote by $\mathbb{Z}[1,n]$ the set $\{1,2,\ldots,n\}$.  For $z=(z_1,\ldots,z_n)\in\mathbb{C}^n$ and a multi-index $\beta=(\beta_1,\ldots,\beta_n)$, we define 
\begin{align*}
	z^{\beta}=\prod\limits_{j=1}^n z_j^{\beta_j}\;\;\text{and}\;\;|z|^{\beta}=\prod\limits_{j=1}^n |z_j|^{\beta_j}.
	\end{align*}

Let $f(z)$ be a holomorphic function in a domain $\Omega\subset \mathbb{C}^n$. Then in the polydisk $\mathbb{P}\Delta(0;r)\subset \Omega$, $f(z)$ has a power series expansion in $z_1,\ldots,z_n$,
\begin{align*}
f(z)=\sum\limits_{\beta_1,\beta_2,\ldots,\beta_n=0}^{\infty} a_{\beta_1,\beta_2,\ldots,\beta_n}z_1^{\beta_1}z_2^{\beta_2}\ldots z_n^{\beta_n}=
\sum\limits_{|\beta|=0}^{\infty} a_{\beta}z^{\beta}=\sum\limits_{|\beta|=0}^{\infty} P_{|\beta|}(z),
\end{align*}
which is absolutely convergent in $\mathbb{P}\Delta(0;r)$, where the term $P_k(z)$ is a homogeneous polynomial of degree $k$. 

\subsection{{\bf Different classes of pluriharmonic mappings}}
Let $\mathcal{H}_n$ denote the class of complex-valued pluriharmonic functions $f$ in $\mathbb{P} \Delta(0;1)$, normalized by 
\begin{align*}
f(0)= 0\; \mbox{and}\;\nabla f(0):=\left(\frac{\partial f(0)}{\partial z_1},\ldots,\frac{\partial f(0)}{\partial z_n}\right)=(1,1,\ldots,1).
\end{align*} 

Obviously when $\dim(\mathbb{C}^n)=1$, $\mathcal{H}(=\mathcal{H}_1)$ is the class of complex-valued harmonic functions $f$ in the unit disk $\mathbb{D}$, normalized by $f(0)=0$, $\frac{\partial f(0)}{\partial z}=1$.
Each function $f$ in $\mathcal{H}_n$ can be expressed as $f=h+\ol g$, where $h$ and $g$ are holomorphic functions in $\mathbb{P} \Delta(0;1)$. Here $h$ and $g$ are called the holomorphic and co-holomorphic parts of $f$ respectively, and have power series representations
\begin{align*}
h(z)=\sum\limits_{j=1}^n z_j+\sum\limits_{k=2}^{\infty}\sum\limits_{|\beta|=k} a_{\beta} z^{\beta}\quad {and}\quad g(z)=\sum\limits_{k=1}^{\infty}\sum\limits_{|\beta|=k} b_{\beta} z^{\beta}.
\end{align*}

If $g(z)\equiv 0$, then $\mathcal{H}_n$ reduces to the class $\mathcal{A}_n$ of holomorphic functions in the unit polydisk $\mathbb{P} \Delta(0;1)$ with $f(0)= 0$ and $\nabla f(0)= (1, 1, \ldots, 1).$ Clearly when $\dim(\mathbb{C}^n)=1$, $\mathcal{A}(=\mathcal{A}_1)$ is the class of holomorphic functions $f$ in the unit disk $\mathbb{D}$, normalized by $f(0)=0=\frac{\partial f(0)}{\partial z}-1$.
Let
\begin{align*}
\mathcal{H}^0_n=\left\lbrace f\in\mathcal{H}_n: \ol{\partial} f(0)=0\right\rbrace.
\end{align*}

Hence for any function $f=h+\ol g$ in $\mathcal{H}^0_n$, its holomorphic and co-holomorphic parts can be represented by
\begin{align}\label{Eq 1.7}
h(z)=\sum\limits_{j=1}^n z_j+\sum\limits_{k=2}^{\infty}\sum\limits_{|\beta|=k} a_{\beta} z^{\beta}\quad {and} \quad g(z)=\sum\limits_{k=2}^{\infty}\sum\limits_{|\beta|=k} b_{\beta} z^{\beta}
\end{align}
respectively. 
Let $\mathcal{S}_{\mathcal{H}}$ be the subclass of $\mathcal{H}$ consisting of univalent and sense preserving harmonic mappings.

\begin{figure}[t]
	\centering
	
	\begin{tikzpicture}[ scale=0.85,
		transform shape,
		>=Stealth,
		node distance=1.6cm,
		every node/.style={
			font=\small,
			align=center
		},
		box/.style={
			draw,
			rounded corners=4pt,
			text width=3.3cm,
			minimum height=.9cm,
			align=center
		},
		main/.style={
			box,
			fill=blue!12,
			font=\bfseries
		},
		analytic/.style={
			box,
			fill=green!10
		},
		interaction/.style={
			box,
			fill=cyan!12,
			minimum width=4.8cm
		},
		geometry/.style={
			box,
			fill=orange!12,
			font=\bfseries,
			minimum width=4cm
		},
		result/.style={
			box,
			fill=yellow!12,
			minimum width=3.3cm
		},
		class/.style={
			box,
			fill=red!10,
			minimum width=2.9cm
		}
		]

		\node[main] (F)
		{$f=h+\overline{g}$\\Normalized Pluriharmonic Mapping};

		\node[analytic,below left=1.0cm and 1.8cm of F] (H)
		{Holomorphic Part\\$h(z)$};
		
		\node[analytic,below right=1.0cm and 1.8cm of F] (G)
		{Co-holomorphic Part\\$\overline{g(z)}$};

		\node[interaction,below=1.5cm of F] (I)
		{Interaction between $h$ and $g$\\Produces Pluriharmonic Geometry};

		\node[geometry,below=1.5cm of I] (P)
		{Geometric Properties};

		\node[result,below left=1.3cm and 1.7cm of P] (C)
		{Coefficient\\Estimates};
		
		\node[result,below=1.5cm of P] (D)
		{Distortion\\Growth};
		
		\node[result,below right=1.3cm and 1.7cm of P] (U)
		{Univalence};

		\node[class,below=1.5cm of D] (S)
		{Starlikeness};
		
		\node[class,left=1.8cm of S] (CV)
		{Convexity};
		
		\node[class,right=1.8cm of S] (CC)
		{Close-to-\\Convexity};

		\draw[->,thick] (H)--(F);
		\draw[->,thick] (G)--(F);
		
		\draw[->,thick] (F)--(I);
		
		\draw[->,thick] (I)--(P);
		
		\draw[->,thick] (P)--(C);
		\draw[->,thick] (P)--(D);
		\draw[->,thick] (P)--(U);
		
		\draw[->,thick] (D)--(S);
		\draw[->,thick] (D)--(CV);
		\draw[->,thick] (D)--(CC);
		
	\end{tikzpicture}
	
	\caption{
		Conceptual architecture of the theory developed in this paper.
			}
	
	\label{fig:conceptual_architecture}
	
\end{figure}

A complex-valued pluriharmonic mapping $f\in \mathcal{H}_n$ is said to be starlike if $f(\mathbb{P} \Delta(0;1))$ is a starlike
domain with respect to the origin. We denote the class of pluriharmonic starlike functions in $\mathbb{P} \Delta(0;1)$ by $\mathcal{S}^*_{\mathcal{H}_n}$. A function $f$ in $\mathcal{H}_n$ is said to be convex if $f(\mathbb{P} \Delta(0;1))$ is convex. We denote the class of pluriharmonic convex mappings in $\mathbb{P} \Delta(0;1)$ by $\mathcal{K}_{\mathcal{H}_n}$. A function $f\in\mathcal{H}_n$ is said to be close-to-convex if $f(\mathbb{P} \Delta(0;1))$ is a close-to-convex domain. We denote the class of pluriharmonic close-to-convex mappings in $\mathbb{P} \Delta(0;1)$ by $\mathcal{C}_{\mathcal{H}_n}$. Finally, we denote by ${\mathcal{S}^*_{\mathcal{H}_n}}^0$, $\mathcal{K}_{\mathcal{H}_n}^0$ and $\mathcal{C}_{\mathcal{H}_n}^0$, the
subclasses of $\mathcal{S}^*_{\mathcal{H}_n}$, $\mathcal{K}_{\mathcal{H}_n}$ and $\mathcal{C}_{\mathcal{H}_n}$ with $\ol{\partial}f(0)=(0,0,\ldots,0)$ respectively.\par 
In 1984, Clunie and Sheil-Small \cite{Clunie-Sheil-Small-1984} investigated the class $\mathcal{S}_{\mathcal{H}}$, together with some geometric subclasses. Subsequently, the class $\mathcal{S}_{\mathcal{H}}$ and its subclasses ${\mathcal{S}^*_{\mathcal{H}_n}}^0$, $\mathcal{K}_{\mathcal{H}_n}^0$, $\mathcal{C}_{\mathcal{H}_n}^0$, $\mathcal{S}^*_{\mathcal{H}_n}$, $\mathcal{K}_{\mathcal{H}_n}$ and $\mathcal{C}_{\mathcal{H}_n}$, have been extensively studied by several authors (see \cite{Aizenberg-Aytuna-Djakov-JMAA-2001}-\cite{Ghosh-Allu-2019}, \cite{Hernandez-Martin-2013}-\cite{Liu-Yang-2019},  \cite{Muhanna-CVEE-2010}, \cite{Ponnusamy-Allu-Vuorinen-2009}-\cite{Ponnusamy-Kaliraj-Starkov-2017}).
\begin{figure}[ht]
	\centering
	\begin{tikzpicture}[
		>=Stealth,
		every node/.style={font=\small},
		cls/.style={
			draw, rounded corners=4pt, thick,
			minimum width=3.2cm, minimum height=0.8cm,
			align=center
		}
		]

		\node[cls, fill=green!10, draw=green!55] (Kz)
		{$\mathcal{K}^0_{\mathcal{H}_n}$};
		\node[cls, fill=orange!10, draw=orange!55,
		below=0.7cm of Kz] (Sz)
		{${\mathcal{S}^*_{\mathcal{H}_n}}^{\!0}$};
		\node[cls, fill=red!8, draw=red!45,
		below=0.7cm of Sz] (Cz)
		{$\mathcal{C}^0_{\mathcal{H}_n}$};

		\node[cls, fill=green!10, draw=green!55,
		right=2.4cm of Kz] (K)
		{$\mathcal{K}_{\mathcal{H}_n}$};
		\node[cls, fill=orange!10, draw=orange!55,
		right=2.4cm of Sz] (S)
		{$\mathcal{S}^*_{\mathcal{H}_n}$};
		\node[cls, fill=red!8, draw=red!45,
		right=2.4cm of Cz] (C)
		{$\mathcal{C}_{\mathcal{H}_n}$};

		\draw[->, thick, green!50!black]
		(Kz) -- node[left,font=\footnotesize]{$\subset$} (Sz);
		\draw[->, thick, orange!60!black]
		(Sz) -- node[left,font=\footnotesize]{$\subset$} (Cz);
		\draw[->, thick, green!50!black]
		(K)  -- node[right,font=\footnotesize]{$\subset$} (S);
		\draw[->, thick, orange!60!black]
		(S)  -- node[right,font=\footnotesize]{$\subset$} (C);
		
		\draw[->, thick, dashed, gray]
		(Kz) -- node[above,font=\footnotesize]{$\subset$} (K);
		\draw[->, thick, dashed, gray]
		(Sz) -- node[above,font=\footnotesize]{$\subset$} (S);
		\draw[->, thick, dashed, gray]
		(Cz) -- node[above,font=\footnotesize]{$\subset$} (C);

		\node[font=\footnotesize, text=gray,
		above=0.15cm of Kz]{$\bar\partial f(0)=\mathbf{0}$};
		\node[font=\footnotesize, text=gray,
		above=0.15cm of K]{General};
		
	\end{tikzpicture}
	\caption{Inclusion relations among the geometric subclasses.
		Solid arrows denote containment within a column; dashed arrows
		indicate that the superscript-$0$ class is a proper subclass of
		the corresponding general class.
		}
	\label{fig:inclusions}
\end{figure}

In 2013, Li and Ponnusamy \cite{Li-Ponnusamy-2013} investigated the properties of functions in $\mathcal{P}_{\mathcal{H}}^{0}$ given by
\begin{align*}
\mathcal{P}_{\mathcal{H}}^{0}
:= \left\{
f=h+g \in \mathcal{H} :
\Re \bigl(h'(z)\bigr) > |g'(z)|,\;\; z\in\mathbb{D}
\right\}.
\end{align*}

The class $\mathcal{P}_{\mathcal{H}}^{0}$ is closely related to the class
\begin{align*}
\mathcal{R}
:= \left\{
f\in\mathcal{A} :
\Re\bigl(f'(z)\bigr)>0,\;\; z\in\mathbb{D}
\right\},
\end{align*}
introduced by MacGregor \cite{MacGregor-1962}. It is known that a harmonic function $f=h+\ol g$ belongs to
$\mathcal{P}_{\mathcal{H}}^{0}$ if and only if the analytic functions
$h+\lambda g$ belong to $\mathcal{R}$ for each $\lambda$
($|\lambda|=1$) (see \cite{Li-Ponnusamy-2013,Li-Ponnusamy-2013a}).
Using this property, Li and Ponnusamy \cite{Li-Ponnusamy-2013} obtained coefficient bounds and the radius of convexity for functions in $\mathcal{P}_{\mathcal{H}}^{0}$.

\begin{table}[ht]
	\centering
	\small
	\renewcommand{\arraystretch}{1.5}
	\setlength{\tabcolsep}{3.5pt}
	
	\begin{tabular}{>{\centering\arraybackslash}p{2.2cm}
			>{\raggedright\arraybackslash}p{3.4cm}
			>{\raggedright\arraybackslash}p{5.3cm}
			>{\raggedright\arraybackslash}p{3.8cm}}
		\toprule
		
		\textbf{Class}
		&
		\textbf{Role}
		&
		\textbf{Definition}
		&
		\textbf{Normalization / Relation}
		\\
		
		\midrule
		\multicolumn{4}{c}{\textbf{Fundamental Classes}}
		\\
		\midrule
		
		$\mathcal H_n$
		&
		Normalized pluriharmonic mappings
		&
		$f$ is pluriharmonic on $\mathbb P\Delta(0;1)$.
		&
		$f(0)=0,\;
		\nabla f(0)=(1,\ldots,1)$.
		\\[1mm]
		
		$\mathcal H_n^{0}$
		&
		Normalized subclass
		&
		$f\in\mathcal H_n$ with
		$\overline{\nabla}f(0)=\mathbf0$.
		&
		Equivalent to
		$g(z)=\displaystyle\sum_{|\alpha|\ge2}b_\alpha z^\alpha$.
		\\[1mm]
		
		$\mathcal A_n$
		&
		Normalized holomorphic subclass
		&
		$f\in\mathcal H_n$ with
		$g\equiv0$.
		&
		$\mathcal A_n\subset\mathcal H_n^{0}\subset\mathcal H_n$.
		\\
		
		\midrule
		\multicolumn{4}{c}{\textbf{Geometric Subclasses}}
		\\
		\midrule
		
		$\mathcal S_{\mathcal H_n}^{*}$
		&
		Pluriharmonic starlike mappings
		&
		$f(\mathbb P\Delta(0;1))$ is a starlike domain
		with respect to the origin.
		&
		Geometric subclass of
		$\mathcal H_n$.
		\\[1mm]
		
		$\mathcal K_{\mathcal H_n}$
		&
		Pluriharmonic convex mappings
		&
		$f(\mathbb P\Delta(0;1))$ is a convex domain.
		&
		Geometric subclass of
		$\mathcal H_n$.
		\\[1mm]
		
		$\mathcal C_{\mathcal H_n}$
		&
		Pluriharmonic close-to-convex mappings
		&
		$f(\mathbb P\Delta(0;1))$ is a close-to-convex domain.
		&
		Geometric subclass of
		$\mathcal H_n$.
		\\
		
		\midrule
		\multicolumn{4}{c}{\textbf{Normalized Geometric Subclasses}}
		\\
		\midrule
		
		${\mathcal S_{\mathcal H_n}^{*}}^{0}$
		&
		Normalized starlike mappings
		&
		$f\in\mathcal S_{\mathcal H_n}^{*}$ with
		$\overline{\nabla}f(0)=\mathbf0$.
		&
		Subclass of
		$\mathcal H_n^{0}$.
		\\[1mm]
		
		$\mathcal K_{\mathcal H_n}^{0}$
		&
		Normalized convex mappings
		&
		$f\in\mathcal K_{\mathcal H_n}$ with
		$\overline{\nabla}f(0)=\mathbf0$.
		&
		Subclass of
		$\mathcal H_n^{0}$.
		\\[1mm]
		
		$\mathcal C_{\mathcal H_n}^{0}$
		&
		Normalized close-to-convex mappings
		&
		$f\in\mathcal C_{\mathcal H_n}$ with
		$\overline{\nabla}f(0)=\mathbf0$.
		&
		Subclass of
		$\mathcal H_n^{0}$.
		\\
		
		\bottomrule
	\end{tabular}
	\vspace{.5cc}
	\caption{Hierarchy of the principal classes of pluriharmonic mappings considered in this paper. The superscript $0$ indicates the additional normalization
		$\overline{\nabla}f(0)=\mathbf0$, equivalently, the co-holomorphic part
		$g$ vanishes to second order at the origin. }
	
	\label{tab:HierarchyClasses}
\end{table}
In 1977, Chichra \cite{Chichra-1977} studied the following class
\begin{align*}
\mathcal{W}(\alpha)
=
\left\{
f \in \mathcal{A} :
\Re\!\left(f'(z)+\alpha z f''(z)\right)>0,
\quad z\in\mathbb{D},\ \alpha\geq 0
\right\}.
\end{align*}

In 2010, the regions of variability for functions in $\mathcal{W}(\alpha)$ were studied by Ponnusamy and Vasudevarao \cite{Ponnusamy-Vasudevarao-2010}, and in 1982, Singh and Singh \cite{Singh-Singh-1982} showed that $\mathcal{W}(1)$ is a subclass of $\mathcal{S}^{*}$.

In 2014, Nagpal and Ravichandran \cite{Nagpal-Ravichandran-2014} studied the following class
\begin{align*}
\mathcal{W}_{\mathcal{H}}^{0}
:=
\left\{
f=h+g \in \mathcal{H} :
\Re\!\left(h'(z)+z h''(z)\right)
>
\left|g'(z)+z g''(z)\right|,
\quad z\in\mathbb{D}
\right\},
\end{align*}
which is the harmonic analogue of $\mathcal{W}(1)$.

It is known that $\mathcal{W}_{\mathcal{H}}^{0}$ is a subclass of both
$\mathcal{S}_{\mathcal{H}}^{*0}$ and $\mathcal{P}_{\mathcal{H}}^{0}$.
In particular, members of $\mathcal{W}_{\mathcal{H}}^{0}$ are fully starlike in $\mathbb{D}$.
Sharp coefficient bounds and growth theorems for functions in
$\mathcal{W}_{\mathcal{H}}^{0}$ have been obtained in \cite{Nagpal-Ravichandran-2014}.

\section{\bf {Preliminary results}}	
In $2019$, Ghosh and Vasudevarao \cite{Ghosh-Allu-2019} have introduced an extension of the class $\mathcal{W}^0_{\mathcal{H}}$ as follows.

For $\alpha\geq 0$, let the class $\mathcal{W}^0_{\mathcal{H}}(\alpha)$ be defined by
\begin{align*}
\mathcal{W}^0_{\mathcal{H}}(\alpha)=\left\lbrace f=h+\ol g\in\mathcal{H}:\Re(h'(z)+\alpha zh''(z))>|g'(z)+\alpha zg''(z)|\;\;\text{for}\;z\in\mathbb{D} \right\rbrace
\end{align*}
with $g'(0)=0$.\vspace{1.2mm}

In the same paper, Ghosh and Vasudevarao \cite{Ghosh-Allu-2019} first established a result providing a one-to-one correspondence between the classes $\mathcal{W}_{\mathcal{H}}^0(\alpha)$ and $\mathcal{W}(\alpha)$.

\begin{theoA}\emph{\cite[Theorem 4.1]{Ghosh-Allu-2019}} The harmonic mapping $f=h+\ol g$ belongs to $\mathcal{W}_{\mathcal{H}}^0(\alpha)$ if and only if the analytic function $F=h+\varepsilon g$ belongs to $\mathcal{W}(\alpha)$ for each $|\varepsilon|=1$.
\end{theoA}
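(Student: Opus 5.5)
The plan is to reduce the statement to a pointwise inequality between two complex numbers, using the linearity of the operator $L[\phi](z)=\phi'(z)+\alpha z\phi''(z)$. For $F=h+\varepsilon g$ with $|\varepsilon|=1$ we have $L[F]=L[h]+\varepsilon L[g]$. Before that, we check the normalization: $F(0)=h(0)+\varepsilon g(0)=0$ and $F'(0)=h'(0)+\varepsilon g'(0)=1$, since $g'(0)=0$ for members of $\mathcal{W}^0_{\mathcal{H}}(\alpha)$. Hence $F\in\mathcal{A}$ in both directions. In the converse direction we assume the normalization $g'(0)=0$ as part of the setting, since $F\in\mathcal{A}$ for every $\varepsilon$ forces $\varepsilon g'(0)=0$ anyway.

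For the forward direction, suppose $\Re L[h](z)>|L[g](z)|$ on $\mathbb{D}$. Then for every unimodular $\varepsilon$,
\[
\Re L[F](z)=\Re L[h](z)+\Re\bigl(\varepsilon L[g](z)\bigr)\ge \Re L[h](z)-|L[g](z)|>0,
\]
so $F\in\mathcal{W}(\alpha)$.

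For the converse, fix $z\in\mathbb{D}$ and suppose $\Re\bigl(L[h](z)+\varepsilon L[g](z)\bigr)>0$ for all $|\varepsilon|=1$. If $L[g](z)=0$, the hypothesis already gives $\Re L[h](z)>0=|L[g](z)|$. Otherwise we choose
\[
\varepsilon=-\frac{\overline{L[g](z)}}{|L[g](z)|},
\]
so that $\varepsilon L[g](z)=-|L[g](z)|$. The hypothesis for this $\varepsilon$ yields $\Re L[h](z)-|L[g](z)|>0$. Since $z$ was arbitrary, $f\in\mathcal{W}^0_{\mathcal{H}}(\alpha)$.

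The only delicate point is the order of quantifiers. In the converse, the choice of $\varepsilon$ depends on $z$, which is legitimate because the hypothesis holds for every $\varepsilon$ at every $z$. Apart from this, the argument is a direct computation and I expect no real obstacle.
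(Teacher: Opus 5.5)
Your argument is correct and is essentially the one the paper gives for its several-variable analogue, Theorem~\ref{Th-1.1}, which reduces to Theorem A when $n=1$. In the forward direction you use $\Re\bigl(\varepsilon L[g](z)\bigr)\ge -|L[g](z)|$, and in the converse you choose, pointwise in $z$, the unimodular $\varepsilon$ with $\varepsilon L[g](z)=-|L[g](z)|$; the paper does the same in two steps, first replacing $\varepsilon$ by $-\varepsilon$ and then taking $\varepsilon=\overline{L[g](z)}/|L[g](z)|$, and it leaves implicit the normalization check $F(0)=0$, $F'(0)=1$ via $g'(0)=0$ that you carry out explicitly.
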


The following results due to Ghosh and Vasudevarao \cite{Ghosh-Allu-2019} provides sharp coefficient bounds for functions in $\mathcal{W}_{\mathcal{H}}^0(\alpha)$.

\begin{theoB}\emph{\cite[Theorem 4.2]{Ghosh-Allu-2019}}
Let $f=h+\overline{g}\in \mathcal{W}_{\mathcal{H}}^{0}(\alpha)$ for
$\alpha\geq 0$ be of the form 
\begin{align}\label{1.2}
h(z)=z+\sum\limits_{m=2}^{\infty}a_m z^m\;\;\text{and}\;\;g(z)=\sum\limits_{m=2}^{\infty} b_m z^m.
\end{align}
Then for $m\geq 2$,
\begin{align*}
|b_m|\leq \frac{1}{\alpha m^2+m(1-\alpha)}.
\end{align*}
	
The result is sharp when $f$ is given by
\begin{align*}
f(z)=z+\frac{1}{\alpha m^2+m(1-\alpha)}\,\overline{z}^{\,m}.
\end{align*}
\end{theoB}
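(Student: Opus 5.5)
Theorem B rests on the analytic function $G(z)=h'(z)+\alpha z h''(z)$ and its co-holomorphic counterpart $g'(z)+\alpha z g''(z)$. The defining inequality of $\mathcal{W}^0_{\mathcal{H}}(\alpha)$ gives
\[
|g'(z)+\alpha z g''(z)|<\Re\bigl(h'(z)+\alpha z h''(z)\bigr)\le |h'(z)+\alpha zh''(z)|,
\]
so $\omega(z)=\dfrac{g'(z)+\alpha z g''(z)}{h'(z)+\alpha z h''(z)}$ is analytic in $\mathbb{D}$ with $|\omega|<1$. This route works, but it is not the most efficient one. Instead I would use Theorem A directly.

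By Theorem A, for every $|\varepsilon|=1$ the function $F_\varepsilon=h+\varepsilon g$ lies in $\mathcal{W}(\alpha)$. Hence $p_\varepsilon(z)=F_\varepsilon'(z)+\alpha zF_\varepsilon''(z)$ has positive real part and $p_\varepsilon(0)=1$. Using \eqref{1.2} and computing term by term, $z^{m}$ contributes $m z^{m-1}+\alpha m(m-1)z^{m-1}$, so
\[
p_\varepsilon(z)=1+\sum_{m=2}^\infty m\bigl(1+\alpha(m-1)\bigr)(a_m+\varepsilon b_m)z^{m-1}.
\]
Carath\'eodory's lemma ($|c_k|\le 2$ for a function $1+\sum c_kz^k$ with positive real part) then gives
\[
\bigl(\alpha m^2+m(1-\alpha)\bigr)|a_m+\varepsilon b_m|\le 2\qquad\text{for all } |\varepsilon|=1.
\]
This alone only bounds $|a_m|+|b_m|$ by $2/(\alpha m^2+m(1-\alpha))$ after choosing $\varepsilon$ to align the arguments, which is too weak for $b_m$. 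So the main obstacle is extracting the sharper bound $1/(\alpha m^2+m(1-\alpha))$ for $b_m$ alone.

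For this I go back to the defining inequality on circles. Write $H(z)=h'+\alpha zh''=1+\sum_{m\ge2} A_m z^{m-1}$ and $K(z)=g'+\alpha zg''=\sum_{m\ge2} B_m z^{m-1}$, where $A_m=\lambda_m a_m$, $B_m=\lambda_m b_m$, and $\lambda_m=\alpha m^2+m(1-\alpha)$. For $0<r<1$,
\[
B_m r^{m-1}=\frac{1}{2\pi}\int_0^{2\pi}K(re^{i\theta})e^{-i(m-1)\theta}\,d\theta .
\]
Therefore
\[
|B_m|r^{m-1}\le\frac{1}{2\pi}\int_0^{2\pi}|K(re^{i\theta})|\,d\theta<\frac{1}{2\pi}\int_0^{2\pi}\Re H(re^{i\theta})\,d\theta=\Re H(0)=1,
\]
using that the mean of the harmonic function $\Re H$ on the circle equals its value at the origin. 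Letting $r\to1$ gives $|B_m|\le1$, i.e. $|b_m|\le 1/(\alpha m^2+m(1-\alpha))$. This mean-value step is the key, and it uses only the definition of the class.

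For sharpness, take $h(z)=z$ and $g(z)=\bar z$-coefficient $c\,z^m$ with $c=1/\lambda_m$. Then $H\equiv1$ and $K(z)=\lambda_m c z^{m-1}=z^{m-1}$, so $|K|<1=\Re H$ in $\mathbb{D}$. Thus $f(z)=z+c\,\overline{z}^{\,m}$ belongs to $\mathcal{W}^0_{\mathcal{H}}(\alpha)$ with $g'(0)=0$ (since $m\ge2$), and equality holds.
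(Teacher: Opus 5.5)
Your proof is correct and follows essentially the same route as the paper: the paper only cites Theorem B, but its proof of the $n$-variable analogue, Theorem~\ref{Th-1.2}, runs exactly your argument (extract the coefficient of $g'+\alpha z g''$ by Cauchy's formula on $|z|=r$, dominate it by the circle mean of $\Re(h'+\alpha z h'')$, which equals $1$, and let $r\to 1^-$), and at $n=1$ that theorem gives precisely the bound in Theorem B. The preliminary detour through Theorem A and Carath\'eodory's lemma is unnecessary, and your single-term extremal function $z+\overline{z}^{\,m}/(\alpha m^2+m(1-\alpha))$ correctly establishes sharpness.
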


\begin{theoC}\emph{\cite[Theorem 4.3]{Ghosh-Allu-2019}}
Let $f=h+\overline{g}\in \mathcal{W}_{\mathcal{H}}^{0}(\alpha)$ for
$\alpha\geq 0$ and be of the form \eqref{1.2}. Then for $m\geq 2$, we have
\begin{enumerate}
\item[\emph{(i)}]
$|a_m|+|b_m|
\leq \frac{2}{\alpha m^2+m(1-\alpha)}$.
\item[\emph{(ii)}]
$\bigl||a_m|-|b_m|\bigr|\leq
\frac{2}{\alpha m^2+m(1-\alpha)}$.
\item[\emph{(iii)}]
$|a_m|\leq
\frac{2}{\alpha m^2+m(1-\alpha)}$.
	\end{enumerate}
	
All these results are sharp for the function
\begin{align*}
f(z)=z+\sum_{m=2}^{\infty}
\frac{2}{\alpha m^2+m(1-\alpha)}\,z^m.
\end{align*}
\end{theoC}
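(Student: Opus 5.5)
The plan is to reduce everything to a coefficient estimate for the analytic class $\mathcal{W}(\alpha)$, using the correspondence of Theorem A. The key algebraic observation is that
$$\alpha m^2+m(1-\alpha)=m\bigl(1+\alpha(m-1)\bigr).$$
This matches the factor produced by the operator $F'+\alpha zF''$ acting on $z^m$.

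\textbf{Step 1: the analytic estimate.} Let $F(z)=z+\sum_{m\ge2}A_mz^m\in\mathcal{W}(\alpha)$. Then
$$p(z):=F'(z)+\alpha zF''(z)=1+\sum_{m\ge2}m\bigl(1+\alpha(m-1)\bigr)A_m z^{m-1}$$
satisfies $p(0)=1$ and $\Re p>0$ in $\mathbb{D}$. Carathéodory's lemma says every coefficient of such a function has modulus at most $2$, so
$$|A_m|\le \frac{2}{\alpha m^2+m(1-\alpha)}\qquad (m\ge2).$$

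\textbf{Step 2: passing to the harmonic class.} Take $f=h+\overline g\in\mathcal{W}^0_{\mathcal H}(\alpha)$ written as in \eqref{1.2}. By Theorem A, $F_\varepsilon=h+\varepsilon g$ lies in $\mathcal{W}(\alpha)$ for every $|\varepsilon|=1$. Its $m$-th coefficient is $a_m+\varepsilon b_m$, so Step 1 gives
$$|a_m+\varepsilon b_m|\le \frac{2}{\alpha m^2+m(1-\alpha)}\qquad\text{for all } |\varepsilon|=1.$$
If $a_mb_m\neq0$, choose $\varepsilon=e^{i(\arg a_m-\arg b_m)}$. Then $\varepsilon b_m$ has the same argument as $a_m$, so $|a_m+\varepsilon b_m|=|a_m|+|b_m|$, which is (i). If $a_m=0$ or $b_m=0$, (i) holds trivially for any choice of $\varepsilon$.

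\textbf{Step 3: the remaining bounds.} Both (ii) and (iii) follow immediately from (i), since
$$\bigl||a_m|-|b_m|\bigr|\le |a_m|+|b_m|\quad\text{and}\quad |a_m|\le|a_m|+|b_m|.$$

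\textbf{Step 4: sharpness.} For the extremal function we have $g\equiv0$ and
$$h'(z)+\alpha zh''(z)=1+2\sum_{m\ge2}z^{m-1}=\frac{1+z}{1-z},$$
which has positive real part in $\mathbb{D}$. Hence this $f$ belongs to $\mathcal{W}^0_{\mathcal H}(\alpha)$. Since $b_m=0$, equality holds in (i), (ii) and (iii) simultaneously for every $m\ge2$.

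The only substantive input is the Carathéodory coefficient bound in Step 1, and it is standard. The one point needing care is the choice of $\varepsilon$ in Step 2 that aligns the arguments of $a_m$ and $\varepsilon b_m$, including the degenerate cases where one of them vanishes.
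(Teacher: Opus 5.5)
Your proof is correct and takes essentially the same route as the paper. The paper only cites Theorem C, but its proof of the $n$-dimensional analogue, Theorem~\ref{Th-1.3}, also passes through the correspondence $F_\varepsilon=h+\varepsilon g\in\mathcal{W}(\alpha)$, a Carath\'eodory-type bound on the coefficients of $F_\varepsilon'+\alpha zF_\varepsilon''$, and a choice of $\varepsilon$ that aligns the arguments of $a_m$ and $\varepsilon b_m$; the only immaterial difference is that the paper gets (ii) directly from $\bigl||a_m|-|b_m|\bigr|\le|a_m+\varepsilon b_m|$ rather than from (i).
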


In the same paper, Ghosh and Vasudevarao \cite{Ghosh-Allu-2019} provides the growth estimate for functions in $\mathcal{W}_{\mathcal{H}}^0(\alpha)$.

\begin{theoD}\emph{\cite[Theorem 4.4]{Ghosh-Allu-2019}} Let $f=h+\ol g\in \mathcal{W}_{\mathcal{H}}^0(\alpha)$ be given by \eqref{1.2} with $0<\alpha\le 1$. Then
\begin{align*}
|z|+2\sum\limits_{m=2}^{\infty}\frac{(-1)^{m-1}|z|^m}{\alpha m^2+m(1-\alpha)}\leq |f(z)|\leq |z|+2\sum\limits_{m=2}^{\infty}\frac{|z|^m}{\alpha m^2+m(1-\alpha)}.
\end{align*}
	
Both inequalities are sharp when $f$ is given by
$f(z)=z+\sum\limits_{m=2}^{\infty}\frac{2}{\alpha m^2+m(1-\alpha)}z^n$, or its rotations.
\end{theoD}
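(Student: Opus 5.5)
The plan is to reduce both inequalities to statements about the analytic functions $F_\varepsilon=h+\varepsilon g$ supplied by Theorem A. I would first write down an integral representation for the class $\mathcal{W}(\alpha)$. If $F\in\mathcal{W}(\alpha)$, put $p(z)=F'(z)+\alpha zF''(z)$. Then $\Re p>0$ and $p(0)=1$. Comparing coefficients, with $F(z)=z+\sum_{m\ge2}A_mz^m$ and $p(z)=1+\sum_{k\ge1}p_kz^k$, gives $A_m=p_{m-1}/(\alpha m^2+m(1-\alpha))$. Equivalently,
\begin{align*}
F'(z)=\int_0^1 p(s^{\alpha}z)\,ds ,
\end{align*}
since the coefficient of $z^k$ on the right is $p_k/(1+\alpha k)=(k+1)A_{k+1}$.

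\textbf{Upper bound.} This is immediate. For $|z|=r$,
\begin{align*}
|f(z)|\le |z|+\sum_{m\ge2}(|a_m|+|b_m|)r^m ,
\end{align*}
and Theorem C(i) bounds each $|a_m|+|b_m|$ by $2/(\alpha m^2+m(1-\alpha))$, which gives the stated right-hand side.

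\textbf{Lower bound, pointwise step.} I would first bound $|h'(z)|-|g'(z)|$ from below. For fixed $z$, choose $\varepsilon$ with $|\varepsilon|=1$ so that $\varepsilon g'(z)=-|g'(z)|\,h'(z)/|h'(z)|$. Then
\begin{align*}
|h'(z)|-|g'(z)|=|F_\varepsilon'(z)|\ge \Re\!\left(\tfrac{\overline{h'(z)}}{|h'(z)|}F_\varepsilon'(z)\right).
\end{align*}
A simpler route is to use directly $|h'|-|g'|\ge \min_{|\varepsilon|=1}\Re F_\varepsilon'(z)$, which follows from $\Re h'-|g'|=\min_\varepsilon\Re(h'+\varepsilon g')$. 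By Theorem A each $F_\varepsilon\in\mathcal{W}(\alpha)$. Combining the integral representation with the Harnack bound $\Re p(w)\ge (1-|w|)/(1+|w|)$ yields
\begin{align*}
|h'(z)|-|g'(z)|\ge \int_0^1\frac{1-s^{\alpha}r}{1+s^{\alpha}r}\,ds=1+2\sum_{k\ge1}\frac{(-1)^k r^k}{1+\alpha k}.
\end{align*}

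\textbf{Lower bound, integration step (main obstacle).} To integrate this I need $f$ to be univalent. Since $\Re F_\varepsilon'>0$ for every $|\varepsilon|=1$, each $F_\varepsilon$ is close-to-convex. I would then invoke the Clunie--Sheil-Small criterion \cite{Clunie-Sheil-Small-1984} to conclude that $f=h+\ol g$ is univalent and sense-preserving, the latter because $|g'|<|h'|$. With univalence in hand, let $\Gamma$ be the preimage under $f$ of the radial segment $[0,f(z)]$. Then
\begin{align*}
|f(z)|=\int_\Gamma |h'(\zeta)d\zeta+\overline{g'(\zeta)d\zeta}|\ge\int_\Gamma(|h'|-|g'|)|d\zeta| .
\end{align*}
The bound from the pointwise step is a function of $|\zeta|$ only. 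I expect the hypothesis $0<\alpha\le1$ to be needed here, to ensure that this radial kernel is positive and behaves monotonically in $|\zeta|$, so that the integral over $\Gamma$ dominates the integral over the radius from $0$ to $r$. Integrating termwise in $\rho\in[0,r]$ then gives
\begin{align*}
r+2\sum_{m\ge2}\frac{(-1)^{m-1}r^m}{m(1+\alpha(m-1))},
\end{align*}
which is exactly the stated lower bound.

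\textbf{Sharpness.} The extremal function comes from taking $g\equiv0$ and $p(z)=(1+z)/(1-z)$, which gives $h(z)=z+\sum_{m\ge2}\frac{2}{\alpha m^2+m(1-\alpha)}z^m$. At $z=r$ the upper bound becomes an equality, and at $z=-r$ the lower bound does; rotations of this function give the remaining extremal cases.
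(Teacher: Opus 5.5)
Your proof is correct. The paper does not prove Theorem D itself; it quotes it from Ghosh--Vasudevarao. The argument it imports in the proof of Theorem \ref{Th-1.4} goes as follows. Write $F_\varepsilon'+\alpha tF_\varepsilon''=(1+\omega)/(1-\omega)$. Bound $|F_\varepsilon'|$ above by $1+2\sum_{k\ge1}|t|^k/(1+k\alpha)$ and below by $1+2\sum_{k\ge1}(-1)^k|t|^k/(1+k\alpha)$. Let $\varepsilon$ vary to get bounds for $|h'|+|g'|$ and $|h'|-|g'|$, and integrate.

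Your lower bound follows the same skeleton. The representation $F'(z)=\int_0^1p(s^\alpha z)\,ds$, combined with Harnack's inequality and $\Re h'-|g'|=\min_{|\varepsilon|=1}\Re F_\varepsilon'$, is a clean way to get $|h'(z)|-|g'(z)|\ge L(|z|)$, where $L(\rho)=\int_0^1\frac{1-s^\alpha\rho}{1+s^\alpha\rho}\,ds$. Sense-preservation comes out as a by-product.

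Your upper bound is genuinely different. Summing the coefficient estimate of Theorem C(i) termwise is shorter once Theorem C is available. The derivative route is self-contained and also yields a distortion estimate for $|h'|+|g'|$.

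You integrate the lower bound over the preimage of a radial segment, using univalence from Clunie--Sheil-Small. This is what the lower bound needs. The adaptation in the proof of Theorem \ref{Th-1.4} skips it: it integrates $|\tilde h'|-|\tilde g'|$ along the radius itself, which does not bound $|f|$ from below.

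Two small corrections. First, the hypothesis $0<\alpha\le1$ plays no role in your argument. $L$ is positive for every $\alpha\ge0$, and positivity alone gives $\int_\Gamma L(|\zeta|)\,|d\zeta|\ge\int_0^rL(\rho)\,d\rho$. Indeed, for a parametrization $\zeta(t)$ of $\Gamma$,
$\frac{d}{dt}\int_0^{|\zeta(t)|}L(\rho)\,d\rho=L(|\zeta(t)|)\frac{d}{dt}|\zeta(t)|\le L(|\zeta(t)|)\,|\zeta'(t)|$.
No monotonicity is needed, so your proof gives both estimates for all $\alpha\ge0$.

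Second, $f(\mathbb{D})$ is only known to be close-to-convex, so the segment $[0,f(z)]$ need not lie in $f(\mathbb{D})$. Take instead a point $w_0\in f(\{|\zeta|=r\})$ of minimal modulus, so that $[0,w_0)\subset f(\{|\zeta|<r\})$. Running your estimate on the preimage of $[0,w_0]$ bounds $|w_0|=\min_{|\zeta|=r}|f(\zeta)|\le|f(z)|$ from below.
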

\begin{figure}[ht]
	\centering
	\pgfplotsset{
		growth/.style={
			width=0.78\textwidth, height=6.8cm,
			xlabel={$r = |z|$},
			ylabel={Bound on $|f(z)|$},
			xmin=0, xmax=0.95,
			ymin=0, ymax=1.85,
			xtick={0,0.2,0.4,0.6,0.8},
			ytick={0,0.5,1.0,1.5},
			grid=both,
			grid style={line width=0.3pt, draw=gray!25},
			major grid style={line width=0.5pt, draw=gray!40},
			tick label style={font=\small},
			label style={font=\small},
			legend style={
				at={(0.5,-0.22)},
				anchor=north,
				legend columns=4,
				font=\footnotesize,
				draw=gray!40,
				fill=white
			},
			legend cell align=left,
			every axis plot/.append style={line width=1.1pt},
		}
	}
	\begin{tikzpicture}
		\begin{axis}[growth]

			\addplot[color=blue!70, thick]
			expression[domain=0:0.93, samples=120]
			{ x
				+ 2*x^2/2  + 2*x^3/3  + 2*x^4/4  + 2*x^5/5
				+ 2*x^6/6  + 2*x^7/7  + 2*x^8/8  + 2*x^9/9
				+ 2*x^10/10+ 2*x^11/11+ 2*x^12/12
			};
			\addlegendentry{Upper, $\alpha=0$}

			\addplot[color=blue!70, thick, dashed]
			expression[domain=0:0.93, samples=120]
			{ x
				+ 2*x^2/2  - 2*x^3/3  + 2*x^4/4  - 2*x^5/5
				+ 2*x^6/6  - 2*x^7/7  + 2*x^8/8  - 2*x^9/9
				+ 2*x^10/10- 2*x^11/11+ 2*x^12/12
			};
			\addlegendentry{Lower, $\alpha=0$}

			\addplot[color=teal!70!black, thick]
			expression[domain=0:0.93, samples=120]
			{ x
				+ 2*x^2/(0.5*4+0.5*2)
				+ 2*x^3/(0.5*9+0.5*3)
				+ 2*x^4/(0.5*16+0.5*4)
				+ 2*x^5/(0.5*25+0.5*5)
				+ 2*x^6/(0.5*36+0.5*6)
				+ 2*x^7/(0.5*49+0.5*7)
				+ 2*x^8/(0.5*64+0.5*8)
				+ 2*x^9/(0.5*81+0.5*9)
				+ 2*x^10/(0.5*100+0.5*10)
				+ 2*x^11/(0.5*121+0.5*11)
				+ 2*x^12/(0.5*144+0.5*12)
			};
			\addlegendentry{Upper, $\alpha=\tfrac{1}{2}$}
			
			\addplot[color=teal!70!black, thick, dashed]
			expression[domain=0:0.93, samples=120]
			{ x
				+ 2*x^2/(0.5*4+0.5*2)
				- 2*x^3/(0.5*9+0.5*3)
				+ 2*x^4/(0.5*16+0.5*4)
				- 2*x^5/(0.5*25+0.5*5)
				+ 2*x^6/(0.5*36+0.5*6)
				- 2*x^7/(0.5*49+0.5*7)
				+ 2*x^8/(0.5*64+0.5*8)
				- 2*x^9/(0.5*81+0.5*9)
				+ 2*x^10/(0.5*100+0.5*10)
				- 2*x^11/(0.5*121+0.5*11)
				+ 2*x^12/(0.5*144+0.5*12)
			};
			\addlegendentry{Lower, $\alpha=\tfrac{1}{2}$}

			\addplot[color=red!65, thick]
			expression[domain=0:0.93, samples=120]
			{ x
				+ 2*x^2/4   + 2*x^3/9   + 2*x^4/16
				+ 2*x^5/25  + 2*x^6/36  + 2*x^7/49
				+ 2*x^8/64  + 2*x^9/81  + 2*x^10/100
				+ 2*x^11/121+ 2*x^12/144
			};
			\addlegendentry{Upper, $\alpha=1$}

			\addplot[color=red!65, thick, dashed]
			expression[domain=0:0.93, samples=120]
			{ x
				+ 2*x^2/4   - 2*x^3/9   + 2*x^4/16
				- 2*x^5/25  + 2*x^6/36  - 2*x^7/49
				+ 2*x^8/64  - 2*x^9/81  + 2*x^10/100
				- 2*x^11/121+ 2*x^12/144
			};
			\addlegendentry{Lower, $\alpha=1$}
			
			\addplot[color=black!30, thick, dotted]
			expression[domain=0:0.93]{x};
			\addlegendentry{$|f(z)|=|z|$ (reference)}
			
		\end{axis}
	\end{tikzpicture}
	\caption{Upper bounds (solid) and lower bounds (dashed) for
		$|f(z)|$ from Theorem~D, plotted as functions of $r = |z|$ for
		$\alpha\in\{0,\tfrac{1}{2},1\}$.
		The partial sums are truncated at $m = 12$. The dotted line
		$|f(z)| = |z|$ is shown for reference. 
		}
	\label{fig:growth-bounds}
\end{figure}

The result below provides a condition that is sufficient for a function to belong to the class $f\in\mathcal{W}_{\mathcal{H}}^0(\alpha)$.
\begin{theoE}\emph{\cite[Theorem 4.5]{Ghosh-Allu-2019}}
Let $f=h+\ol g\in \mathcal{S}_{\mathcal{H}}^0$ be given by \eqref{1.2}. If
\begin{align*} 
\sum\limits_{m=2}^{\infty}\left(\alpha m^2+\left(1-\alpha\right)m\right)\left(|a_m|+|b_m|\right)<1,
\end{align*}
	then $f\in\mathcal{W}_{\mathcal{H}}^0(\alpha)$.
\end{theoE}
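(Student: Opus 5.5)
We have to verify $\Re(h'(z)+\alpha zh''(z))>|g'(z)+\alpha zg''(z)|$ for all $z\in\mathbb{D}$. The approach is to expand both sides as power series and estimate them against the coefficient sum in the hypothesis, using only the triangle inequality.

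From \eqref{1.2} we have
\begin{align*}
h'(z)+\alpha zh''(z)&=1+\sum_{m=2}^{\infty}\bigl(m+\alpha m(m-1)\bigr)a_m z^{m-1},\\
g'(z)+\alpha zg''(z)&=\sum_{m=2}^{\infty}\bigl(m+\alpha m(m-1)\bigr)b_m z^{m-1}.
\end{align*}
The multiplier satisfies $m+\alpha m(m-1)=\alpha m^2+(1-\alpha)m$, which is exactly the weight in the hypothesis. It is nonnegative for $\alpha\ge 0$ and $m\ge 2$, since $\alpha m^2-\alpha m=\alpha m(m-1)\ge 0$. Write $c_m=\alpha m^2+(1-\alpha)m$.

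For $|z|<1$ we then estimate
\begin{align*}
\Re\bigl(h'(z)+\alpha zh''(z)\bigr)-\bigl|g'(z)+\alpha zg''(z)\bigr|
&\ge 1-\sum_{m=2}^{\infty}c_m|a_m||z|^{m-1}-\sum_{m=2}^{\infty}c_m|b_m||z|^{m-1}\\
&> 1-\sum_{m=2}^{\infty}c_m\bigl(|a_m|+|b_m|\bigr)>0.
\end{align*}
The first inequality uses $\Re w\ge -|w|$ on the tail of the $h$-series and the triangle inequality on the $g$-series.

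The second inequality is strict because $|z|^{m-1}<1$. If all coefficients vanish, the difference equals $1>0$ directly. The last inequality is the hypothesis.

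The normalization $g'(0)=0$ is built into \eqref{1.2}, so $f\in\mathcal{W}_{\mathcal{H}}^0(\alpha)$. The hypothesis $f\in\mathcal{S}_{\mathcal{H}}^0$ is used only to fix the normalized form of $f$.

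There is no real obstacle here. The only points needing care are the identification $m+\alpha m(m-1)=c_m$ and the handling of strictness. The series converge absolutely on $\mathbb{D}$, since the $c_m$-weighted coefficient sum is finite, so all the termwise manipulations are justified.
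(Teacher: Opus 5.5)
Your proof is correct and takes essentially the paper's route. The paper only quotes Theorem E from Ghosh--Vasudevarao, but its proof of the $n$-variable analogue, Theorem \ref{Th-1.5}, argues exactly as you do (bound $\Re$ of the $h$-expression below by $1$ minus the weighted absolute coefficient sum, bound the $g$-expression above by the triangle inequality, then apply the hypothesis), and for $n=1$ its weight $n(m^2\alpha+(n-\alpha)m)$ reduces to your $c_m$. One small simplification: your separate handling of strictness is unnecessary, because the hypothesis is already strict, so the chain $\ge 1-\sum_m c_m(|a_m|+|b_m|)|z|^{m-1}\ge 1-\sum_m c_m(|a_m|+|b_m|)>0$ suffices.
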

In the next result Ghosh and Vasudevarao \cite{Ghosh-Allu-2019} provide $\mathcal{W}_{\mathcal{H}}^0(\alpha)$ is closed under convex combinations. 
\begin{theoF}\emph{\cite[Theorem 5.1]{Ghosh-Allu-2019}}
	$\mathcal{W}_{\mathcal{H}}^0(\alpha)$ is closed under convex combinations.
\end{theoF}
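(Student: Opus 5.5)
Theorem F asserts that $\mathcal{W}_{\mathcal{H}}^0(\alpha)$ is convex. The plan is to verify the defining inequality directly for a convex combination, using linearity of the operator $L_\alpha[\phi](z)=\phi'(z)+\alpha z\phi''(z)$ together with the triangle inequality.

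\textbf{Setup.} Let $f_j=h_j+\ol{g_j}\in\mathcal{W}_{\mathcal{H}}^0(\alpha)$ for $j=1,\ldots,N$, and let $t_j\ge 0$ with $\sum_j t_j=1$. Put $F=\sum_j t_j f_j=H+\ol G$, where
\[
H=\sum_j t_j h_j \quad\text{and}\quad G=\sum_j t_j g_j .
\]
Because the $t_j$ are real and nonnegative, $\ol{G}=\sum_j t_j\ol{g_j}$, so this decomposition is legitimate. The normalizations are linear and sum to $1$:
\[
H(0)=0,\qquad H'(0)=\sum_j t_j=1,\qquad G(0)=G'(0)=0 .
\]
Hence $F\in\mathcal{H}$ and $G'(0)=0$.

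\textbf{Key estimate.} By linearity of $L_\alpha$,
\[
\Re L_\alpha[H](z)=\sum_j t_j\,\Re L_\alpha[h_j](z)>\sum_j t_j\,\bigl|L_\alpha[g_j](z)\bigr|\ge\Bigl|\sum_j t_j L_\alpha[g_j](z)\Bigr|=\bigl|L_\alpha[G](z)\bigr| .
\]
The strict inequality holds because each $t_j\ge0$ and at least one $t_j>0$. This is exactly the defining condition, so $F\in\mathcal{W}_{\mathcal{H}}^0(\alpha)$.

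\textbf{Alternative route.} One could instead invoke Theorem A: for each $|\varepsilon|=1$, $H+\varepsilon G=\sum_j t_j(h_j+\varepsilon g_j)$ is a convex combination of members of $\mathcal{W}(\alpha)$. The class $\mathcal{W}(\alpha)$ is clearly convex, since the condition $\Re L_\alpha>0$ is linear in the function. Theorem A then gives the result.

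\textbf{Obstacle.} There is essentially no analytic difficulty here. The only points needing care are the normalization check and the strictness of the inequality, both handled above. The argument also extends to infinite convex combinations, provided the series converge locally uniformly so that derivatives may be taken termwise.
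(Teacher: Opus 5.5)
Your proof is correct and is essentially the paper's approach. The paper quotes Theorem F from Ghosh and Vasudevarao without reproving it, but its proof of the $n$-dimensional analogue, Theorem \ref{Th-1.6}, is exactly this argument: linearity of the operator, then the triangle inequality. You also note that the strict inequality needs some $t_j>0$, which the paper leaves implicit.
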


\begin{figure}[t]
	\centering
	
	\begin{tikzpicture}[scale=0.70,
		transform shape,
		>=Stealth,
		every node/.style={font=\small},
		topic/.style={
			draw,
			rounded corners=3pt,
			minimum width=3.3cm,
			minimum height=0.9cm,
			align=center,
			fill=blue!6
		},
		subtopic/.style={
			draw,
			rounded corners=3pt,
			minimum width=3.0cm,
			minimum height=0.85cm,
			align=center,
			fill=green!8
		},
		paper/.style={
			draw,
			rounded corners=3pt,
			minimum width=4.2cm,
			minimum height=1cm,
			align=center,
			fill=red!15,
			very thick
		}
		]
		
		
		\node[topic] (SCV)
		{Several Complex Variables};
		
		
		\node[subtopic]
		(Holo)
		[below left=1.8cm and 2.7cm of SCV]
		{Holomorphic\\Mappings};
		
		\node[subtopic]
		(Harm)
		[below=1.8cm of SCV]
		{Harmonic\\Mappings};
		
		\node[subtopic]
		(Pluri)
		[below right=1.8cm and 2.7cm of SCV]
		{Pluriharmonic\\Mappings};
		
		
		\node[subtopic]
		(Uni)
		[below=2.0cm of Holo]
		{Univalence};
		
		\node[subtopic]
		(Dist)
		[below=2.0cm of Harm]
		{Coefficient Estimates\\Distortion};
		
		\node[subtopic]
		(Geo)
		[below=2.0cm of Pluri]
		{Starlike\\Convex\\Close-to-Convex};
		
		
		\node[paper]
		(Paper)
		[below=2.3cm of Dist]
		{Present Investigation};
		
		
		\draw[->,thick]
		(SCV)--(Holo);
		
		\draw[->,thick]
		(SCV)--(Harm);
		
		\draw[->,thick]
		(SCV)--(Pluri);
		
		\draw[->,thick]
		(Holo)--(Uni);
		
		\draw[->,thick]
		(Harm)--(Dist);
		
		\draw[->,thick]
		(Pluri)--(Geo);
		
		\draw[->,thick]
		(Uni)--(Paper);
		
		\draw[->,thick]
		(Dist)--(Paper);
		
		\draw[->,thick]
		(Geo)--(Paper);
		
		
		\begin{scope}[on background layer]
			
			\node[
			draw=red!70,
			rounded corners=5pt,
			line width=1pt,
			inner sep=6pt,
			fit=(Paper),
			fill=red!5
			]{};
			
		\end{scope}
		
	\end{tikzpicture}
	
	\caption{
		Research landscape illustrating the mathematical framework of the present investigation.
	}
	
	\label{FigResearchLandscape}
	
\end{figure}
\begin{tcolorbox}[
	enhanced,
	colback=blue!4,
	colframe=blue!60!black,
	boxrule=0.8pt,
	arc=2mm,
	left=3mm,
	right=3mm,
	top=2mm,
	bottom=2mm,
	title=\textbf{Motivation},
	coltitle=black,
	fonttitle=\bfseries,
	colbacktitle=blue!15,
	attach boxed title to top left=
	{yshift=-2mm,xshift=3mm},
	boxed title style={
		boxrule=0.6pt,
		colframe=blue!60!black,
		colback=blue!15,
		arc=1.5mm
	}
	]
	The systematic study of geometric properties of harmonic mappings in the plane
	dates back to Clunie and Sheil--Small \cite{Clunie-Sheil-Small-1984}.
	Subsequent extensions to pluriharmonic mappings and higher dimensions include
	Aizenberg et al. \cite{Aizenberg-Aytuna-Djakov-JMAA-2001},
	Li--Ponnusamy \cite{Li-Ponnusamy-2013}, and recent work on coefficient
	problems and radius theorems
	\cite{Nagpal-Ravichandran-2014,Ghosh-Allu-2019}. Our results complement and
	extend these works by providing sharp coefficient bounds in several complex
	variables and explicit radii for geometric properties under natural
	normalizations.
\end{tcolorbox}
\section{{\bf Main Results}}\label{Sec-2}
We now introduce the class $\mathcal{W}_{\mathcal{H}_n^0}(\alpha)$ of pluriharmonic mappings defined on $\mathbb{P} \Delta(0;1)$ as follows.
\begin{defi} For $\alpha\geq 0$ and $z\in \mathbb{P} \Delta(0;1)$, let
	\begin{align*}
		&\mathcal{W}_{\mathcal{H}_n^0}(\alpha)\\=&\left\lbrace h+\ol g\in \mathcal{H}_n^0:\Re\left(\frac{\partial h(z)}{\partial z_k}+\alpha z_l\frac{\partial^2 h(z)}{\partial z_j\partial z_k}\right)>\left|\frac{\partial g(z)}{\partial z_k}+\alpha z_l\frac{\partial^2 g(z)}{\partial z_j\partial z_k}\right|\;\forall j,k,l\in\mathbb{Z}[1,n] \right\rbrace.
	\end{align*}
\end{defi}

When $n=1$, we denote $\mathcal{W}_{\mathcal{H}_1^0}(\alpha)$ by $\mathcal{W}_{\mathcal{H}}^0(\alpha)$.

\smallskip
We will show that the class $\mathcal{W}_{\mathcal{H}_n^0}(\alpha)$ is closely related to the class
\begin{align*}
	\mathcal{W}_n(\alpha)=\left\lbrace \phi\in\mathcal{A}_n: \Re\left(\frac{\partial \phi(z)}{\partial z_k}+\alpha z_l\frac{\partial^2 \phi(z)}{\partial z_j\partial z_k}\right)>0\;\forall\;z\in \mathbb{P} \Delta(0;1)\; \text{ and }\; j,k,l\in\mathbb{Z}[1,n]\right\rbrace.
\end{align*}

When $n=1$, we denote $\mathcal{W}_1(\alpha)$ by $\mathcal{W}(\alpha)$.  \vspace{1.2mm}

The novelty of this research lies in the extension of the theory of \emph{harmonic} functions in $\mathbb{C}$ to \emph{pluriharmonic} functions in higher-dimensional complex spaces ($\mathbb{C}^n$). In this paper, we study the analytic and geometric properties of the class $\mathcal{W}_{\mathcal{H}_n^0}(\alpha)$. We also establish that the subclass $\mathcal{W}_{\mathcal{H}_n^0}(\alpha)$ is a natural pluriharmonic extension of the holomorphic class $\mathcal W_n(\alpha)$.
The sharp coefficient bounds and growth estimates provided for the class $\mathcal{W}_{\mathcal{H}_n^0}(\alpha)$ constitute new contributions to the study of geometric function theory in several complex variables.\vspace{1.2mm}

The following result provides a one-to-one correspondence between the classes $\mathcal{W}_{\mathcal{H}_n^0}(\alpha)$ and $\mathcal{W}_n(\alpha)$.

\begin{theo}\label{Th-1.1} A pluriharmonic mapping $f=h+\ol g$ is in $\mathcal{W}_{\mathcal{H}_n^0}(\alpha)$ if and only if the function  $F_{\varepsilon}=h+\varepsilon g$ is in $\mathcal{W}_n(\alpha)$ for each $\varepsilon\; (|\varepsilon|=1)$.
\end{theo}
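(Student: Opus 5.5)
Throughout, fix $j,k,l\in\mathbb{Z}[1,n]$ and $z\in\mathbb{P}\Delta(0;1)$, and abbreviate
\begin{align*}
A(z)=\frac{\partial h(z)}{\partial z_k}+\alpha z_l\frac{\partial^2 h(z)}{\partial z_j\partial z_k},\qquad B(z)=\frac{\partial g(z)}{\partial z_k}+\alpha z_l\frac{\partial^2 g(z)}{\partial z_j\partial z_k}.
\end{align*}
Since differentiation is linear, the corresponding expression for $F_{\varepsilon}=h+\varepsilon g$ is exactly $A(z)+\varepsilon B(z)$. The whole proof therefore reduces to the elementary fact that, for complex numbers $A,B$,
\begin{align*}
\Re A>|B| \iff \Re(A+\varepsilon B)>0 \text{ for all } |\varepsilon|=1.
\end{align*}
This fact is then applied pointwise for every $z$ and every triple $(j,k,l)$.

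The normalization must also be checked. If $f\in\mathcal{H}_n^0$, then $h(0)=g(0)=0$, $\nabla h(0)=(1,\ldots,1)$ and $\nabla g(0)=0$ by \eqref{Eq 1.7}. Hence $F_\varepsilon(0)=0$ and $\nabla F_\varepsilon(0)=(1,\ldots,1)$, so $F_\varepsilon\in\mathcal{A}_n$.

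\emph{Forward direction.} Suppose $\Re A>|B|$. For $|\varepsilon|=1$ we have
\begin{align*}
\Re(A+\varepsilon B)\ge \Re A-|\varepsilon B|=\Re A-|B|>0.
\end{align*}
Hence $F_\varepsilon\in\mathcal{W}_n(\alpha)$ for each $\varepsilon$.

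\emph{Converse direction.} Suppose $\Re(A+\varepsilon B)>0$ for all unimodular $\varepsilon$, at a given point and triple. If $B=0$, then taking any $\varepsilon$ gives $\Re A>0=|B|$. Otherwise, choose $\varepsilon=-\overline{B}/|B|$, which depends on $z$, $j$, $k$, $l$. Then $\varepsilon B=-|B|$, so $\Re A-|B|>0$.

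This pointwise choice of $\varepsilon$ is legitimate because the hypothesis holds for every fixed $\varepsilon$, and hence at every point for that $\varepsilon$. It is the only subtle step: the extremal $\varepsilon$ varies with $z$, whereas the hypothesis is stated with $\varepsilon$ fixed first. Since the hypothesis quantifies over all $\varepsilon$, we may swap the quantifiers and evaluate at the point-dependent choice.

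It remains to confirm that $f\in\mathcal{H}_n^0$ in the converse. Here $h+\varepsilon g\in\mathcal{A}_n$ for all $\varepsilon$. Subtracting the normalizations for two distinct values of $\varepsilon$ gives $g(0)=0$ and $\nabla g(0)=0$, and then $\nabla h(0)=(1,\ldots,1)$. Thus $f=h+\overline{g}\in\mathcal{H}_n^0$, which concludes the argument.
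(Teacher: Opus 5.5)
Your proof is correct and matches the paper's argument. The forward direction uses the triangle inequality, and the converse uses a pointwise choice of unimodular $\varepsilon$ with $\varepsilon B=-|B|$; the paper reaches the same choice by replacing $\varepsilon$ with $-\varepsilon$ and taking $\varepsilon=\overline{B}/|B|$. Your explicit check of the normalizations ($F_\varepsilon\in\mathcal{A}_n$ in one direction, $f\in\mathcal{H}_n^0$ in the other) is a small but worthwhile addition, since the paper leaves it implicit.
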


\begin{proof} Let $f\in \mathcal{W}_{\mathcal{H}_n^0}(\alpha)$. Then for every $\varepsilon$ satisfying $|\varepsilon|=1$, we have
\begin{align*}
\Re\left(\frac{\partial F_{\varepsilon}(z)}{\partial z_k}+\alpha z_l\frac{\partial^2 F_{\varepsilon}(z)}{\partial z_j\partial z_k}\right)
=&\Re\left(\frac{\partial h(z)}{\partial z_k}+\alpha z_l\frac{\partial^2 h(z)}{\partial z_j\partial z_k}\right)+\Re\left(\varepsilon\left(\frac{\partial g(z)}{\partial z_k}+\alpha z_l\frac{\partial^2 g(z)}{\partial z_j\partial z_k}\right)\right)\\>&
\Re\left(\frac{\partial h(z)}{\partial z_k}+\alpha z_l\frac{\partial^2 h(z)}{\partial z_j\partial z_k}\right)-\left|\frac{\partial g(z)}{\partial z_k}+\alpha z_l\frac{\partial^2 g(z)}{\partial z_j\partial z_k}\right|\\>&0
\end{align*}
for all $z\in \mathbb{P} \Delta(0_n;1_n)$ and for all $j,k,l\in\mathbb{Z}[1,n]$ and so $F_{\varepsilon}\in \mathcal{W}_n(\alpha)$. Conversely, let $F_{\varepsilon}=h+\varepsilon g\in \mathcal{W}_n(\alpha)$ for each $\varepsilon\;(|\varepsilon|=1)$. Then, we have
\begin{align*}
\Re\left(\frac{\partial F_{\varepsilon}(z)}{\partial z_k}+\alpha z_l\frac{\partial^2 F_{\varepsilon}(z)}{\partial z_j\partial z_k}\right)
=\Re\left(\frac{\partial h(z)}{\partial z_k}+\alpha z_l\frac{\partial^2 h(z)}{\partial z_j\partial z_k}\right)+\varepsilon\left(\frac{\partial g(z)}{\partial z_k}+\alpha z_l\frac{\partial^2 g(z)}{\partial z_j\partial z_k}\right)>0
\end{align*}
for all $z\in \mathbb{P} \Delta(0_n;1_n)$ and for all $j,k,l\in\mathbb{Z}[1,n]$. Replacing $\varepsilon$ by $-\varepsilon$ in the above inequality, we have
\begin{align}\label{Th1:1.1}
\Re\left(\frac{\partial h(z)}{\partial z_k}+\alpha z_l\frac{\partial^2 h(z)}{\partial z_j\partial z_k}\right)>\Re\left(\varepsilon\left(\frac{\partial g(z)}{\partial z_k}+\alpha z_l\frac{\partial^2 g(z)}{\partial z_j\partial z_k}\right)\right)
\end{align}
for all $z\in \mathbb{P} \Delta(0_n;1_n)$ and for all $j,k,l\in\mathbb{Z}[1,n]$. Since $\varepsilon$ is arbitrary, we may choose it so that 
\begin{align}\label{Th1:1.2}
\Re\left(\varepsilon\left(\frac{\partial g(z)}{\partial z_k}+\alpha z_l\frac{\partial^2 g(z)}{\partial z_j\partial z_k}\right)\right)=\left|\varepsilon\left(\frac{\partial g(z)}{\partial z_k}+\alpha z_l\frac{\partial^2 g(z)}{\partial z_j\partial z_k}\right)\right|
\end{align}
for all $z\in \mathbb{P} \Delta(0_n;1_n)$ and for all $j,k,l\in\{1,2,\ldots,n\}$.
For the verification of the equality (\ref{Th1:1.2}), we substitute
\begin{align*}
\varepsilon=\frac{\ol{\frac{\partial g(z)}{\partial z_k}+\alpha z_l\frac{\partial^2 g(z)}{\partial z_j\partial z_k}}}{\left|\frac{\partial g(z)}{\partial z_k}+\alpha z_l\frac{\partial^2 g(z)}{\partial z_j\partial z_k}\right|}
\end{align*}
when $\frac{\partial g(z)}{\partial z_k}+\alpha z_l\frac{\partial^2 g(z)}{\partial z_j\partial z_k}\neq 0$. Obviously (\ref{Th1:1.2}) is true when $\frac{\partial g(z)}{\partial z_k}+\alpha z_l\frac{\partial^2 g(z)}{\partial z_j\partial z_k}= 0$. Now using (\ref{Th1:1.2}) to (\ref{Th1:1.1}), we get
\begin{align*}
\Re\left(\frac{\partial h(z)}{\partial z_k}+\alpha z_l\frac{\partial^2 h(z)}{\partial z_j\partial z_k}\right)>\Re\left|\frac{\partial g(z)}{\partial z_k}+\alpha z_l\frac{\partial^2 g(z)}{\partial z_j\partial z_k}\right|
\end{align*}
for all $z\in \mathbb{P} \Delta(0_n;1_n)$ and for all $j,k,l\in\{1,2,\ldots,n\}$. Consequently $f\in \mathcal{W}_{\mathcal{H}_n^0}(\alpha)$.
\end{proof}

The following result provides the sharp coefficient bounds for functions in $\mathcal{P}_{\mathcal{H}_n^0}(M)$.

\begin{theo}\label{Th-1.2} Let $f=h+\ol g\in \mathcal{W}_{\mathcal{H}_n^0}(\alpha)$ and be given by (\ref{Eq 1.7}). Then for any multi-index $\beta=(\beta_1,\beta_2,\ldots,\beta_n)$ such that $|\beta|=m\geq 2$, we have
\begin{align*}
\sum\limits_{|\beta|=m}|b_{\beta}|\leq \frac{\binom{m+n-1}{n-1}n^2}{\alpha m^2+(n-\alpha)m}
\end{align*}
The inequality is sharp.
\end{theo}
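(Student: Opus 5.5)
The plan is to prove the sharper pointwise estimate
\[
\beta_k\bigl(1+\alpha(\beta_j-\delta_{jk})\bigr)\,|b_\beta|\le 1\qquad (j,k\in\mathbb{Z}[1,n],\ |\beta|=m),
\]
then sum it over $j,k$ to bound each single coefficient, and finally sum over all multi-indices of degree $m$. The key tool is a torus mean-value argument, the several-variable version of the $n=1$ argument behind Theorem B. It uses the defining inequality of $\mathcal{W}_{\mathcal{H}_n^0}(\alpha)$ directly rather than Theorem \ref{Th-1.1}.

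\textbf{Step 1: the torus estimate.} Fix $j,k$ and take $l=j$ in the definition. Put
\[
H_{kj}=\frac{\partial h}{\partial z_k}+\alpha z_j\frac{\partial^2 h}{\partial z_j\partial z_k},\qquad G_{kj}=\frac{\partial g}{\partial z_k}+\alpha z_j\frac{\partial^2 g}{\partial z_j\partial z_k}.
\]
These are holomorphic in $\mathbb{P}\Delta(0;1)$, satisfy $\Re H_{kj}>|G_{kj}|$, and by (\ref{Eq 1.7}) we have $H_{kj}(0)=1$. For $0<r<1$, integrate over the torus $z_t=re^{i\theta_t}$. Since $\Re H_{kj}$ is pluriharmonic, its torus mean equals $\Re H_{kj}(0)=1$. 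Cauchy's formula on the torus gives, for the coefficient $c_\gamma$ of $z^\gamma$ in $G_{kj}$,
\[
|c_\gamma|\,r^{|\gamma|}\le \frac{1}{(2\pi)^n}\int_{[0,2\pi]^n}|G_{kj}|\,d\theta<\frac{1}{(2\pi)^n}\int_{[0,2\pi]^n}\Re H_{kj}\,d\theta=1 .
\]
A direct differentiation of $g=\sum b_\beta z^\beta$ shows that the coefficient of $z^{\beta-e_k}$ in $G_{kj}$ (when $\beta_k\ge1$) is $\beta_k\bigl(1+\alpha(\beta_j-\delta_{jk})\bigr)b_\beta$. Letting $r\to1$ gives the pointwise estimate above. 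When $\beta_k=0$ it holds trivially.

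\textbf{Step 2: summation.} Sum the estimate over all $n^2$ pairs $(j,k)$. The weights add up to
\[
\sum_{k}\beta_k\sum_j\bigl(1+\alpha(\beta_j-\delta_{jk})\bigr)=\sum_k\beta_k\,(n+\alpha m-\alpha)=\alpha m^2+(n-\alpha)m .
\]
Hence
\[
|b_\beta|\le \frac{n^2}{\alpha m^2+(n-\alpha)m}
\]
for every $\beta$ with $|\beta|=m$. There are exactly $\binom{m+n-1}{n-1}$ multi-indices of degree $m$, so summing over them gives the stated bound.

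\textbf{Step 3: sharpness, the expected obstacle.} For $n=1$ the bound reduces to Theorem B, and the extremal function $z+\frac{1}{\alpha m^2+(1-\alpha)m}\,\ol z^{\,m}$ is sharp there. For general $n$, equality requires every per-pair inequality to be equality with equal weights across all $(j,k)$. This is delicate: for $\beta=me_1$ the weights with $k\neq1$ vanish. I would first try a symmetric candidate
\[
h(z)=\sum_t z_t,\qquad g(z)=c\,P_m(z)
\]
with $P_m$ built from monomials in the diagonal direction, for instance $(z_1+\cdots+z_n)^m$ suitably normalized. I would then check which $c$ keeps $\Re H_{kj}>|G_{kj}|$ and whether equality is attained or approached in the limit. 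If exact equality fails for $n\ge2$, I would interpret ``sharp'' through the $n=1$ case, or through the single-coefficient bound along the diagonal slice.
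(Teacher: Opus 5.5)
Your Steps 1--2 are correct and prove the inequality. They use the same idea as the paper: compare the torus mean of $|G|$ with that of $\Re H$, which equals $1$, then sum the weights over indices to produce $\alpha m^2+(n-\alpha)m$. Your version is sounder. The paper substitutes $z=(re^{i\theta},\dots,re^{i\theta})$ and sums over all $j,k,l$, which is not a valid coefficient extraction: for $l\neq j$ the two parts of $G$ carry $b_\beta$ on different monomials, $z^{\beta-e_k}$ and $z^{\beta-e_j-e_k+e_l}$. The paper also inserts the factor $\binom{m+n-1}{n-1}$ without justification. Your choice $l=j$ puts both contributions on $z^{\beta-e_k}$. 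It also gives the stronger per-coefficient bound $|b_\beta|\le n^2/(\alpha m^2+(n-\alpha)m)$.

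The gap is Step 3, and it cannot be closed: for $n\ge 2$ the bound is not sharp, and your own estimate proves it. Equality in the summed bound would force $|b_\beta|=n^2/(\alpha m^2+(n-\alpha)m)$ for every $\beta$ with $|\beta|=m$, including $\beta=me_1$. But the pair $(j,k)=(1,1)$ alone gives
\[
|b_{me_1}|\le\frac{1}{m\bigl(1+\alpha(m-1)\bigr)}<\frac{n^2}{m\bigl(n+\alpha(m-1)\bigr)}=\frac{n^2}{\alpha m^2+(n-\alpha)m},
\]
where the strict inequality is equivalent to $n(n-1)+(n^2-1)\alpha(m-1)>0$. So over the whole class, $\sum_{|\beta|=m}|b_\beta|$ stays below the stated bound by the fixed positive amount $\frac{n^2}{\alpha m^2+(n-\alpha)m}-\frac{1}{m(1+\alpha(m-1))}$. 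Hence no extremal function exists, symmetric or not, and the bound is not even approached.

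A concrete case: for $n=m=2$ and $\alpha=0$ the stated bound is $3$. Your Step 1 gives $|b_{(2,0)}|,|b_{(0,2)}|\le\tfrac12$ and $|b_{(1,1)}|\le 1$, so the sum is at most $2$. In fact the maximum there is $1$, attained by $z_1+z_2+\ol{z_1z_2}$.

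The paper's extremal $f_1$, with $b_\beta=n^2/(\alpha m^2+(n-\alpha)m)$ for every $m\ge 2$, is not in the class. For $n\ge 2$ it violates your $(1,1)$ estimate at $\beta=me_1$. Even for $n=1$ it gives $g'+\alpha z g''=z/(1-z)$, which is unbounded while $\Re(h'+\alpha z h'')=1$.

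Reinterpreting ``sharp'' does not prove the statement as written. The right fix is to replace Step 3 by the non-sharpness argument above and claim sharpness only for $n=1$, where $z+\ol z^{\,m}/(\alpha m^2+(1-\alpha)m)$ is extremal.
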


\begin{proof} 
\begin{table}[H]
	\centering
	\caption{Notation used throughout the proof.}
	
	\begin{tcolorbox}[
		colback=white,
		colframe=black,
		boxrule=0.8pt,
		arc=2mm,
		width=0.78\textwidth,
		left=3mm,
		right=3mm,
		top=2mm,
		bottom=2mm]
		\centering
		\renewcommand{\arraystretch}{1.15}
		\begin{tabular}{ll}
			\hline
			\textbf{Symbol} & \textbf{Meaning}\\
			\hline
			$\beta$ & Multi-index of degree $m$\\
			$\beta_k$ & Exponent of $z_k$\\
			$\beta_{jk}$ & Mixed derivative coefficient\\
			$\beta_j^*$ & Reduced exponent after differentiation\\
			$\beta_k^{**}$ & Modified exponent after differentiation\\
			$P_m,Q_m$ & Homogeneous polynomials of degree $m$\\
			\hline
		\end{tabular}
	\end{tcolorbox}
	
\end{table}	
	
	Let $f=h+\ol g\in \mathcal{W}_{\mathcal{H}_n^0}(\alpha)$. Then
\begin{align}\label{Th2:1.1}
\Re\left(\frac{\partial h(z)}{\partial z_k}+\alpha z_l\frac{\partial^2 h(z)}{\partial z_j\partial z_k}\right)>\left|\frac{\partial g(z)}{\partial z_k}+\alpha z_l\frac{\partial^2 g(z)}{\partial z_j\partial z_k}\right|
\end{align}
for all $z=(z_1,z_2,\ldots,z_n)\in \mathbb{P} \Delta(0_n;1_n)$ and for all $j,k,l\in\mathbb{Z}[1,n]$. It follows from (\ref{Eq 1.7}) that
\begin{align}\label{Th2:1.2}
h(z)=\sum\limits_{j=1}^n z_j+\sum\limits_{m=2}^{\infty}P_m(z)\quad {and} \quad g(z)=\sum\limits_{k=2}^{\infty} Q_m(z)
\end{align}
for all $z\in \mathbb{P} \Delta(0_n;1_n)$, where
\begin{align}\label{Th2:1.3}
P_{m}(z)=\sum\limits_{|\beta|=m} a_{\beta} z^{\beta}\quad \text{and}\quad Q_m(z)=\sum\limits_{|\beta|=m} b_{\beta} z^{\beta}
\end{align}
are homogeneous polynomials of degree $m\geq 2$ in $z\in \mathbb{P} \Delta(0_n;1_n)$. We know that the number of terms in $\sum\limits_{|\beta|=m}$ is $\binom{|\beta|+n-1}{n-1}$. A simple computation using (\ref{Th2:1.3}) shows that
\begin{align}\label{Th2:1.3a}
\frac{\partial Q_m(z)}{\partial z_k}=\sum\limits_{|\beta|=m}\beta_{k} b_{\beta}z_1^{\beta_1}\ldots z_{j-1}^{\beta_{j-1}}z_{j}^{\beta_j}z_{j+1}^{\beta_{j+1}}\ldots z_{k-1}^{\beta_{k-1}}z_{k}^{\beta_k-1}z_{k+1}^{\beta_{k+1}}\ldots z_l^{\beta_l}\ldots z_n^{\beta_n}
\end{align}
and
\begin{align}\label{Th2:1.4}
z_l\frac{\partial^2 Q_m(z)}{\partial z_j\partial z_k}=\sum\limits_{|\beta|=m}\beta_{jk} b_{\beta}z_1^{\beta_1}\ldots z_{j-1}^{\beta_{j-1}}z_{j}^{\beta_j^*}z_{j+1}^{\beta_{j+1}}\ldots z_{k-1}^{\beta_{k-1}}z_{k}^{\beta_k^{**}}z_{k+1}^{\beta_{k+1}}\ldots z_l^{\beta_l+1}\ldots z_n^{\beta_n},
\end{align}
where
\begin{align}\label{ll.1}
\beta_{jk}=
\begin{cases}
\beta_j(\beta_j-1), & \text{if}\; j=k\;\text{and}\;\beta_j=|\beta|,\\[2ex]
\beta_j(\beta_j-1), & \text{if}\; j=k\;\text{and}\;2\leq \beta_j<|\beta|,\\[2ex]
\beta_j\beta_k,& \text{if}\; j\neq k
\end{cases}
\;\;,\quad 
\beta^*_j=
\begin{cases}
\beta_j-2,& \text{if}\; j=k,\\[2ex]
\beta_j-1,& \text{if}\; j\neq k
\end{cases}
\end{align}
and
\begin{align}\label{ll.2}
\beta^{**}_k=
\begin{cases}
\beta_k,& \text{if}\; j=k,\\[2ex]
\beta_k-1,& \text{if}\; j\neq k.
\end{cases}
\end{align}

Using (\ref{Th2:1.2})-(\ref{Th2:1.3a}) and (\ref{Th2:1.4}), we get respectively
\begin{align}\label{Th2:1.5a}
\frac{\partial g(z)}{\partial z_k}&=
\sum\limits_{m=2}^{\infty} \frac{\partial Q_m(z)}{\partial z_k}\\&=\sum\limits_{m=2}^{\infty}\sum\limits_{|\beta|=m}\beta_{k} b_{\beta}z_1^{\beta_1}\ldots z_{j-1}^{\beta_{j-1}}z_{j}^{\beta_j}z_{j+1}^{\beta_{j+1}}\ldots z_{k-1}^{\beta_{k-1}}z_{k}^{\beta_k-1}z_{k+1}^{\beta_{k+1}}\ldots z_l^{\beta_l}\ldots z_n^{\beta_n}\nonumber
\end{align}
and 
\begin{align}\label{Th2:1.5}
z_l\frac{\partial^2 g(z)}{\partial z_j\partial z_k}&=
\sum\limits_{m=2}^{\infty} z_l\frac{\partial^2 Q_m(z)}{\partial z_j\partial z_k}\\&=\sum\limits_{m=2}^{\infty}\sum\limits_{|\beta|=m}\beta_{jk} b_{\beta}z_1^{\beta_1}\ldots z_{j-1}^{\beta_{j-1}}z_{j}^{\beta_j^*}z_{j+1}^{\beta_{j+1}}\ldots z_{k-1}^{\beta_{k-1}}z_{k}^{\beta_k^{**}}z_{k+1}^{\beta_{k+1}}\ldots z_l^{\beta_l+1}\ldots z_n^{\beta_n}.\nonumber
\end{align}

By applying Cauchy's integral formula to $\frac{\partial h(z)}{\partial z_k}$ and $z_l \frac{\partial^2 h(z)}{\partial z_j \partial z_k}$, we find from \eqref{Th2:1.5a} and \eqref{Th2:1.5} that
\begin{align}\label{Th2:1.6a}
&(2\pi i)^n \beta_{k}b_{\beta}\\=&\int\limits_{|z_1|=r_1}\ldots \int\limits_{|z_n|=r_n}\frac{\frac{\partial g(z)}{\partial z_k}\;dz_1\;d z_2\ldots d z_n}{z_1^{\beta_1+1}\ldots z_{j-1}^{\beta_{j-1}+1}z_{j}^{\beta_j+1}z_{j+1}^{\beta_{j+1}+1}\ldots z_{k-1}^{\beta_{k-1}+1}z_{k}^{\beta_k}z_{k+1}^{\beta_{k+1}+1}\ldots z_l^{\beta_l+1}\ldots z_n^{\beta_n+1}},\nonumber
\end{align}
and
\begin{align}\label{Th2:1.6}
&(2\pi i)^n \beta_{jk}b_{\beta}\\=&\int\limits_{|z_1|=r_1}\ldots \int\limits_{|z_n|=r_n}\frac{z_l \frac{\partial^2 g(z)}{\partial z_j\partial z_k}\;dz_1\;d z_2\ldots d z_n}{z_1^{\beta_1+1}\ldots z_{j-1}^{\beta_{j-1}+1}z_{j}^{\beta_j^*+1}z_{j+1}^{\beta_{j+1}+1}\ldots z_{k-1}^{\beta_{k-1}+1}z_{k}^{\beta_k^{**}+1}z_{k+1}^{\beta_{k+1}+1}\ldots z_l^{\beta_l+2}\ldots z_n^{\beta_n+1}},\nonumber
\end{align}
where $0<r_j<1$ for $j=1,2,\ldots,n$. We set $z=\left(re^{\iota\theta},re^{\iota\theta},\ldots,re^{\iota\theta}\right)$, where $0\leq \theta\leq 2\pi$.
Now from (\ref{Th2:1.6a}) and (\ref{Th2:1.6}), we have respectively
\begin{align}\label{Th2:1.6b}
(2\pi i)^n \beta_{k}b_{\beta}=\int\limits_{0}^{2\pi}\ldots \int\limits_0^{2\pi}\frac{\frac{\partial g(z)}{\partial z_k}\;(d\theta)^n}{r^{m-1}e^{\iota (m-1)\theta}}
\end{align}
and
\begin{align}\label{Th2:1.6c}
(2\pi i)^n \beta_{jk}b_{\beta}=\int\limits_{0}^{2\pi}\ldots \int\limits_0^{2\pi}\frac{re^{\iota \theta} \frac{\partial^2 g(z)}{\partial z_j\partial z_k}\;(d\theta)^n}{r^{m-1}e^{\iota (m-1)\theta}}.
\end{align}
\noindent Clearly from (\ref{Th2:1.6b}) and (\ref{Th2:1.6c}), we get
\begin{align*}
(2\pi i)^n \left(\beta_k+\alpha \beta_{jk}\right)b_{\beta}=\int\limits_{0}^{2\pi}\ldots \int\limits_0^{2\pi}\frac{\left(\frac{\partial g(z)}{\partial z_k}+\alpha re^{\iota \theta} \frac{\partial^2 g(z)}{\partial z_j\partial z_k}\right)\;(d\theta)^n}{r^{m-1}e^{\iota (m-1)\theta}}
\end{align*}
and so
\begin{align}\label{Th2:1.7}
(2\pi)^n\sum\limits_{|\beta|=m}\left(\beta_k+\alpha \beta_{jk}\right)|b_{\beta}|\leq \binom{|\beta|+n-1}{n-1}\int\limits_{0}^{2\pi}\ldots \int\limits_0^{2\pi}\frac{\left|\frac{\partial g(z)}{\partial z_k}+\alpha re^{\iota \theta} \frac{\partial^2 g(z)}{\partial z_j\partial z_k}\right|\;(d\theta)^n}{r^{m-1}}
\end{align}
for all $j,k,l\in\mathbb{Z}[1,n]$. 

Consequently, from (\ref{Th2:1.7}), we obtain
\begin{align}\label{Th2:1.8}
&(2\pi)^nr^{m-1} \sum\limits_{l=1}^n\sum\limits_{|\beta|=m}\sum\limits_{j=1}^n\sum\limits_{k=1}^n \left(\beta_k+\alpha \beta_{jk}\right)|b_{\beta}|\\=&
(2\pi)^nr^{m-1} \sum\limits_{|\beta|=m}\sum\limits_{l=1}^n\sum\limits_{j=1}^n\sum\limits_{k=1}^n \left(\beta_k+\alpha \beta_{jk}\right)|b_{\beta}|\nonumber\\ \leq &\binom{|\beta|+n-1}{n-1}\int\limits_{0}^{2\pi}\ldots \int\limits_0^{2\pi}\sum\limits_{l=1}^n\sum\limits_{j=1}^n\sum\limits_{k=1}^n\left|\frac{\partial g(z)}{\partial z_k}+\alpha re^{\iota \theta} \frac{\partial^2 g(z)}{\partial z_j\partial z_k}\right|\;(d\theta)^n.\nonumber
\end{align}
We now observe that if $\beta_j=|\beta|=m$ for $j=1,2,\dots,n$, then
\begin{align*}
\sum\limits_{|\beta|=m}\sum\limits_{j=1}^n\sum\limits_{k=1}^n\beta_{jk}|b_{\beta}|=\sum\limits_{|\beta|=m}\sum\limits_{j=1}^n \beta_j(\beta_j-1)|b_{\beta}|=m(m-1)\sum\limits_{\substack{j=1\\\beta_j=m}}^n|b_{\beta}|
\end{align*}
and if $\beta_j<|\beta|$ for $j=1,2,\ldots,n$, then we have
\begin{align*}
\sum\limits_{|\beta|=m}\sum\limits_{j=1}^n\sum\limits_{k=1}^n\beta_{jk}|b_{\beta}|=&
\sum\limits_{|\beta|=m}\left(\sum\limits_{j=1}^n \beta_j(\beta_j-1)+2\sum\limits_{\substack{j,k=1\\j\neq k}}^n \beta_j\beta_k\right)|b_{\beta}|\\=&m(m-1)\sum\limits_{\substack{|(\beta_1,\beta_2,\ldots,\beta_n)|=m\\\beta_j<m}}|b_{\beta}|.
\end{align*}
Therefore,
\begin{align}\label{Th2:1.8a}
\sum\limits_{|\beta|=m}\sum\limits_{j=1}^n\sum\limits_{k=1}^n\beta_{jk}|b_{\beta}|=m(m-1)\sum\limits_{|\beta|=m}|b_{\beta}|.
\end{align}

Using (\ref{Th2:1.8a}) in (\ref{Th2:1.8}), we get
\begin{align}\label{Th2:1.8b}
&(2\pi)^nr^{m-1}n(nm+\alpha m(m-1)) \sum\limits_{|\beta|=m}|b_{\beta}|
\\ \leq &\binom{|\beta|+n-1}{n-1}\int\limits_{0}^{2\pi}\ldots \int\limits_0^{2\pi}\sum\limits_{l=1}^n\sum\limits_{j=1}^n\sum\limits_{k=1}^n\left|\frac{\partial g(z)}{\partial z_k}+\alpha re^{\iota \theta} \frac{\partial^2 g(z)}{\partial z_j\partial z_k}\right|\;(d\theta)^n.\nonumber
\end{align}

For any multi-index $\nu=(\nu_1,\nu_2,\ldots,\nu_n)$, we have
\begin{align}\label{Th2:1.9}
\int\limits_{0}^{2\pi}\ldots \int\limits_0^{2\pi} (e^{i\theta_1})^{\nu_1}\ldots (e^{i\theta_n})^{\nu_n}d\theta_1 \ldots d\theta_n=
\begin{cases}
0,& \nu\neq (0,0,\ldots,0),\\[2ex]
(2\pi)^n,& \nu=(0,0,\ldots,0).
\end{cases}
\end{align}
Moreover, 
\begin{align}\label{Th2:1.8c}
&\frac{\partial h(z)}{\partial z_k}+\alpha z_l\frac{\partial^2 h(z)}{\partial z_j\partial z_k}\\=&1+
\sum\limits_{m=2}^{\infty}\sum\limits_{|\beta|=m}\beta_{k} a_{\beta}z_1^{\beta_1}\ldots z_{j-1}^{\beta_{j-1}}z_{j}^{\beta_j}z_{j+1}^{\beta_{j+1}}\ldots z_{k-1}^{\beta_{k-1}}z_{k}^{\beta_k-1}z_{k+1}^{\beta_{k+1}}\ldots z_l^{\beta_l}\ldots z_n^{\beta_n}\nonumber\\&+\alpha 
\sum\limits_{m=2}^{\infty}\sum\limits_{|\beta|=m}\beta_{jk} a_{\beta}z_1^{\beta_1}\ldots z_{j-1}^{\beta_{j-1}}z_{j}^{\beta_j^*}z_{j+1}^{\beta_{j+1}}\ldots z_{k-1}^{\beta_{k-1}}z_{k}^{\beta_k^{**}}z_{k+1}^{\beta_{k+1}}\ldots z_l^{\beta_l+1}\ldots z_n^{\beta_n}.\nonumber
\end{align}

Using (\ref{Th2:1.9}) to (\ref{Th2:1.8c}), we deduce that
\begin{align*}
\frac{1}{(2\pi)^n}\int\limits_{0}^{2\pi}\ldots \int\limits_0^{2\pi} \left(\frac{\partial h(z)}{\partial z_k}+\alpha z_l\frac{\partial^2 h(z)}{\partial z_j\partial z_k}\right)\;d\theta_1\;d \theta_2\ldots d \theta_n=1
\end{align*}
and so for $z=\left(re^{\iota\theta},re^{\iota\theta},\ldots,re^{\iota\theta}\right)$, where $0\leq \theta\leq 2\pi$, we get
\begin{align}\label{Th2:1.9a}
\frac{1}{(2\pi)^n}\int\limits_{0}^{2\pi}\ldots \int\limits_0^{2\pi} \Re\left(\frac{\partial h(z)}{\partial z_k}+\alpha re^{\iota \theta}\frac{\partial^2 h(z)}{\partial z_j\partial z_k}\right)\;(d\theta)^n=1
\end{align}
for all $j,k,l\in\mathbb{Z}[1,n]$.
In view of (\ref{Th2:1.1}) and (\ref{Th2:1.9a}) and using (\ref{Th2:1.8b}), we obtain
\begin{align*}
&r^{m-1}n(nm+\alpha m(m-1)) \sum\limits_{|\beta|=m}|b_{\beta}|
\\ \leq &\binom{|\beta|+n-1}{n-1}\frac{1}{(2\pi)^n}\int\limits_{0}^{2\pi}\ldots \int\limits_0^{2\pi}\sum\limits_{l=1}^n\sum\limits_{j=1}^n\sum\limits_{k=1}^n\left|\frac{\partial g(z)}{\partial z_k}+\alpha re^{\iota \theta} \frac{\partial^2 g(z)}{\partial z_j\partial z_k}\right|\;(d\theta)^n\\\leq &
\binom{|\beta|+n-1}{n-1}\frac{1}{(2\pi)^n}\int\limits_{0}^{2\pi}\ldots \int\limits_0^{2\pi}\sum\limits_{l=1}^n\sum\limits_{j=1}^n\sum\limits_{k=1}^n\Re\left(\frac{\partial h(z)}{\partial z_k}+\alpha re^{\iota \theta}\frac{\partial^2 h(z)}{\partial z_j\partial z_k}\right)\;(d\theta)^n\\\leq&
n^3\binom{|\beta|+n-1}{n-1}.
\end{align*}

Letting $r\to 1^-$, if $m\geq 2$, we obtain
\begin{align*}
\sum\limits_{|\beta|=m}|b_{\beta}|\leq \frac{\binom{m+n-1}{n-1}n^2}{\alpha m^2+(n-\alpha)m}.
\end{align*}
\vspace{1.2mm}

To show that the bound is sharp, we consider the following function
\begin{align*}
f_1(z)=\sum\limits_{j=1}^n z_j+\sum\limits_{m=2}^{\infty}\sum\limits_{|\beta|=m} b_{\beta} \ol{z^{\beta}},
\end{align*}
where 
\begin{align*}
b_{\beta}=\frac{n^2}{\alpha m^2+(n-\alpha)m}
\end{align*}
for all multi-index $\beta=(\beta_1,\beta_2,\ldots,\beta_n)$ such that $|\beta|=m$. 
It is easy to see that $f_1\in \mathcal{W}_{\mathcal{H}_n^0}(\alpha)$, and 
\begin{align*}
\sum\limits_{|\beta|=m}|b_{\beta}(f_1)|=\frac{\binom{m+n-1}{n-1}n^2}{\alpha m^2+(n-\alpha)m}.
\end{align*} 
\begin{table}[h]
	\centering
	\caption{Main ingredients in the proof of Theorem \ref{Th-1.2}.}
	\begin{tabular}{ll}
		\hline
		Step & Tool Used\\
		\hline
		Series expansion & Homogeneous polynomials\\
		Coefficient extraction & Cauchy's Integral Formula\\
		Integral simplification & Orthogonality identity\\
		Summation & Multi-index combinatorics\\
		Final estimate & Membership condition\\
		Sharpness & Extremal mapping\\
		\hline
	\end{tabular}
\end{table}

\end{proof}

\begin{lem}\label{Lm-3.1}\emph{\cite[Theorem 6.1.4]{Graham-Kohr}} Let $f$ be holomorphic in the polydisk $\mathbb{P}\Delta(0_n;1_n)$ such that $|f(z)|\leq 1$ for all $z\in \mathbb{P}\Delta(0_n;1_n)$. Then
\begin{align*}
\left|\frac{\partial^{|\beta|} f(0)}{\partial z_1^{\beta_1}\ldots \partial z_n^{\beta_n}}\right|\leq \beta!
\end{align*}
 for multi-index $\beta=(\beta_1,\ldots, \beta_n)$.
\end{lem}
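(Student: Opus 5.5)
The plan is the standard Cauchy-estimate argument on the distinguished boundary, carried out by induction on the dimension $n$.

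\emph{Base case $n=1$.} Fix $0<r<1$. Cauchy's formula gives
\begin{align*}
f^{(k)}(0)=\frac{k!}{2\pi i}\int_{|\zeta|=r}\frac{f(\zeta)}{\zeta^{k+1}}\,d\zeta ,
\end{align*}
and since $|f|\le 1$ this yields $|f^{(k)}(0)|\le k!\,r^{-k}$. Letting $r\to1^-$ gives $|f^{(k)}(0)|\le k!$.

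\emph{General case.} For fixed $0<r<1$, use the iterated Cauchy integral formula on the torus $|z_1|=\cdots=|z_n|=r$ contained in $\mathbb{P}\Delta(0_n;1_n)$:
\begin{align*}
\frac{\partial^{|\beta|} f(0)}{\partial z_1^{\beta_1}\cdots\partial z_n^{\beta_n}}=\frac{\beta!}{(2\pi i)^n}\int_{|z_1|=r}\cdots\int_{|z_n|=r}\frac{f(z)\,dz_1\cdots dz_n}{z_1^{\beta_1+1}\cdots z_n^{\beta_n+1}} .
\end{align*}
This formula follows by applying the one-variable Cauchy formula in each variable successively. That step is legitimate because $f$ is holomorphic in each variable separately and continuous on the closed polydisk of radius $r$, so Fubini applies.

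Next, estimate the integrand. On the torus, $|f|\le1$ and $|z^{\beta+1_n}|=r^{|\beta|+n}$, while the product of the path lengths is $(2\pi r)^n$. Hence the modulus of the derivative is at most
\begin{align*}
\frac{\beta!}{(2\pi)^n}\,(2\pi r)^n\, r^{-|\beta|-n}=\beta!\,r^{-|\beta|}.
\end{align*}
Letting $r\to1^-$ gives the claim.

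Equivalently, one can read the bound off the power series $f(z)=\sum a_\beta z^\beta$. The derivative equals $\beta!\,a_\beta$, and
\begin{align*}
a_\beta=\frac{1}{(2\pi)^n}\int_{[0,2\pi]^n} f(re^{i\theta_1},\ldots,re^{i\theta_n})\,r^{-|\beta|}e^{-i\beta\cdot\theta}\,d\theta ,
\end{align*}
which gives $|a_\beta|\le r^{-|\beta|}\to1$.

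The only point requiring care is justifying the iterated Cauchy formula (equivalently, the orthogonality relation (\ref{Th2:1.9}) applied to the absolutely convergent expansion on the compact torus). This is standard: uniform convergence of the power series on the torus allows term-by-term integration. There is no real obstacle here; the statement is the classical Cauchy inequality, as in \cite{Graham-Kohr}.
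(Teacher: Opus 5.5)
Your proof is correct. It is the classical Cauchy inequality: apply the iterated Cauchy integral formula on the torus $|z_1|=\cdots=|z_n|=r<1$, bound the integrand by $r^{-|\beta|-n}$, and let $r\to 1^-$, which is the standard argument behind the cited result of Graham--Kohr (the paper itself gives no proof beyond that citation); the only blemish is that the announced induction on $n$ is never used, since you apply the $n$-fold formula directly.
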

Let $G\not=\varnothing$ be an open subset of $\mathbb{C}^n$. Let $f$ be a holomorphic function on $G$. For a point $a\in\mathbb{C}^n$, we write 
\begin{align*}
	f(z)=\sum_{i=0}^{\infty}P_i(z-a),
\end{align*}
where the term $P_i(z-a)$ is either identically zero or a homogeneous polynomial of degree $i$. Denote the zero-multiplicity of $f$ at $a$ by 
\begin{align*}
	k=\min\{i:P_i(z-a)\not\equiv 0\}.
\end{align*}
Clearly $1$ is the zero-multiplicity of $f$ at $a$ when $f(a)=0$ and $\frac{\partial f(a)}{\partial z_j}\neq 0$ for some $j=1,2,\ldots,n$.

New we state the multidimensional version of Theorem C ($(i)$ and $(ii)$).
\begin{theo}\label{Th-1.3} Let $f=h+\ol g\in \mathcal{W}_{\mathcal{H}_n^0}(\alpha)$ and be given by (\ref{Eq 1.7}). Then for any multi-index $\beta=(\beta_1,\beta_2,\ldots,\beta_n)$ such that $|\beta|=m\geq 2$, we have
\begin{enumerate}
\item[\emph{(i)}] $\displaystyle \left|\sum\limits_{|\beta|=m}a_{\beta}\right|+\left|\sum\limits_{|\beta|=m}b_{\beta}\right|\leq \frac{2n^2\binom{m+n-1}{n-1}}{\alpha m^2+(n-\alpha)m}$,\vspace{1.2mm}
\item[\emph{(ii)}] $\displaystyle \left|\;\left|\sum\limits_{|\beta|=m}a_{\beta}\right|-\left|\sum\limits_{|\beta|=m}b_{\beta}\right|\;\right|\leq \frac{2n^2\binom{m+n-1}{n-1}}{\alpha m^2+(n-\alpha)m}$,\vspace{1.2mm}
\item[\emph{(iii)}] 
$\displaystyle \left|\sum\limits_{|\beta|=m}a_{\beta}\right|\leq \frac{2n^2\binom{m+n-1}{n-1}}{\alpha m^2+(n-\alpha)m}$.
\end{enumerate}
All three inequalities are sharp.
\end{theo}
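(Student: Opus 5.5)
We reduce to the holomorphic class via Theorem \ref{Th-1.1} and then rerun the coefficient argument of Theorem \ref{Th-1.2} for a single holomorphic function.

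\textbf{Step 1 (a bound for $\mathcal{W}_n(\alpha)$).} Fix $\varepsilon$ with $|\varepsilon|=1$. By Theorem \ref{Th-1.1}, $F_\varepsilon=h+\varepsilon g\in\mathcal{W}_n(\alpha)$, with coefficients $c_\beta=a_\beta+\varepsilon b_\beta$ for $|\beta|=m\ge 2$. For $\phi\in\mathcal{W}_n(\alpha)$ with coefficients $c_\beta$, we show
\begin{align*}
\left|\sum_{|\beta|=m}c_\beta\right|\le \frac{2n^2\binom{m+n-1}{n-1}}{\alpha m^2+(n-\alpha)m}.
\end{align*}
Set
\[
p(z)=\frac{\partial \phi(z)}{\partial z_k}+\alpha z_l\frac{\partial^2\phi(z)}{\partial z_j\partial z_k}.
\]
Then $\Re p>0$ and $p(0)=1$. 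We restrict to the diagonal $z=(re^{i\theta},\ldots,re^{i\theta})$ and extract the $(m-1)$-th Fourier coefficient. This is the same computation as (\ref{Th2:1.6b})--(\ref{Th2:1.8a}), now with the full summation over $j,k,l$. The Carathéodory-type estimate for a function of positive real part gives $|\widehat{\Re p}(m-1)|\le 2\widehat{\Re p}(0)$. Summing over $j,k,l\in\mathbb{Z}[1,n]$ produces the factor $n\bigl(nm+\alpha m(m-1)\bigr)$ on the left and $2n^3\binom{m+n-1}{n-1}$ on the right. This is the same bookkeeping as in Theorem \ref{Th-1.2}, with the $\Re h$ estimate replaced by $|\,\cdot\,|\le 2\Re(\cdot)$ on mean values. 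Letting $r\to1^-$ gives the displayed bound, since $\alpha m^2+(n-\alpha)m = nm+\alpha m(m-1)$.

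\textbf{Step 2 (choosing $\varepsilon$).} Write $A=\sum_{|\beta|=m} a_\beta$, $B=\sum_{|\beta|=m} b_\beta$ and $C_m=\frac{2n^2\binom{m+n-1}{n-1}}{\alpha m^2+(n-\alpha)m}$. Step 1 gives $|A+\varepsilon B|\le C_m$ for every unimodular $\varepsilon$. Choosing $\varepsilon$ so that $\varepsilon B$ has the same argument as $A$ gives $|A|+|B|\le C_m$, which is (i). Part (ii) follows from (i), since $\bigl||A|-|B|\bigr|\le |A|+|B|$. Part (iii) is immediate from (i).

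\textbf{Step 3 (sharpness).} Take $g\equiv0$ and
\[
h(z)=\sum_{j} z_j+\sum_{m\ge2}\sum_{|\beta|=m}\frac{2n^2}{\alpha m^2+(n-\alpha)m}z^\beta,
\]
by analogy with Theorem C and with the extremal function $f_1$ of Theorem \ref{Th-1.2}. For this $h$ we have $|A|=C_m$ and $B=0$, so equality holds in all three inequalities. Membership in the class would be checked as for $f_1$ in Theorem \ref{Th-1.2}.

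\textbf{Main obstacle.} The hardest step is Step 1: justifying the passage from the single-variable Carathéodory bound, applied on the diagonal slice, to the summed multi-index identity (\ref{Th2:1.8a}). On the diagonal the different $\beta$ of the same degree collapse into one Fourier mode, and the weights $\beta_k+\alpha\beta_{jk}$ depend on $\beta$. I would first try to sum over $j,k$ before restricting, so that the weights become the uniform factor $nm+\alpha m(m-1)$, exactly as in the proof of Theorem \ref{Th-1.2}.
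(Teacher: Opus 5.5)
\textbf{Steps 1 and 2 are sound, but Step 3 (sharpness) fails and cannot be repaired for $n\ge 2$.}

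Your argument for (i)--(iii) works, and restricting to the diagonal is cleaner than the paper's route. The paper first proves $|P_\gamma(j,k,l)|\le 2$ for every multivariate coefficient of $P_{jkl}$, using roots-of-unity averaging, a Cayley transform and Lemma~\ref{Lm-3.1}. It then estimates $\bigl|\sum_{\gamma}P_\gamma(j,k,l)\bigr|$ termwise, and that is where its binomial factor comes from.

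In your route that factor never appears. For fixed $j,k,l$, the function $p(t)=\frac{\partial F_\varepsilon}{\partial z_k}(t,\ldots,t)+\alpha t\,\frac{\partial^2 F_\varepsilon}{\partial z_j\partial z_k}(t,\ldots,t)$ is a Carath\'eodory function on $\mathbb{D}$. Its $t^{m-1}$-coefficient is exactly $\sum_{|\beta|=m}(\beta_k+\alpha\beta_{jk})c_\beta$, so this number has modulus at most $2$. The correct Fourier form is $|\hat p(m-1)|\le 2\,\widehat{\Re p}(0)$; your version with $\widehat{\Re p}(m-1)$ loses a factor $2$. For each individual $\beta$ one has $\sum_k\beta_k=m$ and $\sum_{j,k}\beta_{jk}=(\sum_j\beta_j)^2-\sum_j\beta_j=m(m-1)$. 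Summing over $j,k,l$ therefore gives
\[
n\bigl(\alpha m^2+(n-\alpha)m\bigr)\Bigl|\sum_{|\beta|=m}c_\beta\Bigr|\le 2n^3 .
\]
So your ``main obstacle'' is not one. All $\beta$ of degree $m$ collapse onto a single Fourier mode, but that is harmless because the statement only involves $\sum_\beta c_\beta$. Do not lean on (\ref{Th2:1.6b}), which wrongly claims that the diagonal integral isolates a single $b_\beta$. Step 2 then yields $|A|+|B|\le \frac{2n^2}{\alpha m^2+(n-\alpha)m}$. This implies (i)--(iii), but it is smaller than the stated bound by the factor $\binom{m+n-1}{n-1}\ge m+1$ whenever $n\ge2$.

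That stronger bound is why Step 3 is a genuine gap. For $n\ge 2$ the stated bounds are never attained, and your candidate (the paper's $f_2$) does not belong to $\mathcal{W}_{\mathcal{H}_n^0}(\alpha)$.
\begin{itemize}
\item Take $n=2$, $\alpha=0$. Then $\frac{\partial h}{\partial z_1}(t,t)=1+\sum_{m\ge2}2(m+1)t^{m-1}$. Its coefficients exceed $2$, and at $t=-\frac12$ it equals $1-\frac{16}{9}=-\frac79<0$.
\item Deferring to $f_1$ does not help, because $f_1$ also violates the defining inequality. For $n=2$, $\alpha=0$ one has $\frac{\partial g}{\partial z_1}(t,t)=\sum_{m\ge2}(m+1)t^{m-1}$, which equals $4>1$ at $t=\frac12$.
\end{itemize}
The paper's proof rests on the same function and the same unchecked membership claim, so the defect lies in the statement itself. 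Sharpness holds only for $n=1$, where it is Theorem C with extremal function $z+\sum_{m\ge2}\frac{2z^m}{\alpha m^2+(1-\alpha)m}$. For $n\ge2$ you can prove the three inequalities, indeed in the stronger form displayed above, but you must drop the sharpness assertion.
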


\begin{proof}Let $f=h+\ol g\in \mathcal{W}_{\mathcal{H}_n^0}(\alpha)$. Then from Theorem \ref{Th-1.1}, we see that the function $F_{\varepsilon}=h+\varepsilon g$ belongs to $\mathcal{W}_{n}(\alpha)$ for each $\varepsilon\; (|\varepsilon|=1)$, and further
\begin{align}\label{Th3:1.1}
\displaystyle\Re\left(\frac{\partial F_{\varepsilon}(z)}{\partial z_k}+\alpha z_l\frac{\partial^2 F_{\varepsilon}(z)}{\partial z_j\partial z_k}\right)
=\Re\left(\frac{\partial h(z)}{\partial z_k}+\alpha z_l\frac{\partial^2 h(z)}{\partial z_j\partial z_k}+\varepsilon\left(\frac{\partial g(z)}{\partial z_k}+\alpha z_l\frac{\partial^2 g(z)}{\partial z_j\partial z_k}\right)\right)>0
\end{align}
for all $z=(z_1,z_2,\ldots,z_n)\in \mathbb{P} \Delta(0_n;1_n)$ and for all $j,k,l\in\mathbb{Z}[1,n]$. It is clear that
\begin{align*}
\displaystyle F_{\varepsilon}(z)=\sum\limits_{j=1}^n z_j+\sum\limits_{m=2}^{\infty}\left(P_m(z)+\varepsilon Q_m(z)\right),
\end{align*}
for all $z\in \mathbb{P} \Delta(0_n;1_n)$, where $P_m(z)$ and $Q_m(z)$ are defined in (\ref{Th2:1.3}). Note that
 \begin{align*}
\displaystyle \frac{\partial F_{\varepsilon}(z)}{\partial z_k}=&1+
 \sum\limits_{m=2}^{\infty} \frac{\partial \left(P_m(z)+\varepsilon Q_m(z)\right)}{\partial z_k}\\=&1+\sum\limits_{m=2}^{\infty}\sum\limits_{|\beta|=m}\beta_{k} c_{\beta}z_1^{\beta_1}\ldots z_{j-1}^{\beta_{j-1}}z_{j}^{\beta_j}z_{j+1}^{\beta_{j+1}}\ldots z_{k-1}^{\beta_{k-1}}z_{k}^{\beta_k-1}z_{k+1}^{\beta_{k+1}}\ldots z_l^{\beta_l}\ldots z_n^{\beta_n}
 \end{align*}
 and
\begin{align*}
\displaystyle z_l\frac{\partial^2 F_{\varepsilon}(z)}{\partial z_j\partial z_k}=&
\sum\limits_{m=2}^{\infty} z_l\frac{\partial^2 \left(P_m(z)+\varepsilon Q_m(z)\right)}{\partial z_j\partial z_k}\\=&\sum\limits_{m=2}^{\infty}\sum\limits_{|\beta|=m}\beta_{jk} c_{\beta}z_1^{\beta_1}\ldots z_{j-1}^{\beta_{j-1}}z_{j}^{\beta_j^*}z_{j+1}^{\beta_{j+1}}\ldots z_{k-1}^{\beta_{k-1}}z_{k}^{\beta_k^{**}}z_{k+1}^{\beta_{k+1}}\ldots z_l^{\beta_l+1}\ldots z_n^{\beta_n},\nonumber
\end{align*}
where $c_{\beta}=a_{\beta}+\varepsilon b_{\beta}$ and $\beta_{jk}$, $\beta^*_j$ and $\beta^{**}_k$ are defined as in (\ref{ll.1}) and (\ref{ll.2}) respectively.
Consequently
\begin{align}\label{Th3:1.2}
\displaystyle &\frac{\partial F_{\varepsilon}(z)}{\partial z_k}+\alpha z_l\frac{\partial^2 F_{\varepsilon}(z)}{\partial z_j\partial z_k}\\ =&1+
\sum\limits_{m=2}^{\infty} \frac{\partial \left(P_m(z)+\varepsilon Q_m(z)\right)}{\partial z_k}+\alpha \sum\limits_{m=2}^{\infty} z_l\frac{\partial^2 \left(P_m(z)+\varepsilon Q_m(z)\right)}{\partial z_j\partial z_k}\nonumber\\=&
1+\sum\limits_{m=2}^{\infty}\sum\limits_{|\beta|=m}\beta_{k} c_{\beta}z_1^{\beta_1}\ldots z_{j-1}^{\beta_{j-1}}z_{j}^{\beta_j}z_{j+1}^{\beta_{j+1}}\ldots z_{k-1}^{\beta_{k-1}}z_{k}^{\beta_k-1}z_{k+1}^{\beta_{k+1}}\ldots z_l^{\beta_l}\ldots z_n^{\beta_n}\nonumber\\&+
\alpha\sum\limits_{m=2}^{\infty}\sum\limits_{|\beta|=m}\beta_{jk} c_{\beta}z_1^{\beta_1}\ldots z_{j-1}^{\beta_{j-1}}z_{j}^{\beta_j^*}z_{j+1}^{\beta_{j+1}}\ldots z_{k-1}^{\beta_{k-1}}z_{k}^{\beta_k^{**}}z_{k+1}^{\beta_{k+1}}\ldots z_l^{\beta_l+1}\ldots z_n^{\beta_n}\nonumber
\end{align}
for all $z=(z_1,z_2,\ldots,z_n)\in \mathbb{P} \Delta(0_n;1_n)$ and for all $j,k,l\in\mathbb{Z}[1,n]$.

Let 
\begin{align}\label{Th3:1.3}
\displaystyle P_{jkl}(z)=\frac{\partial F_{\varepsilon}(z)}{\partial z_k}+\alpha z_l\frac{\partial^2 F_{\varepsilon}(z)}{\partial z_j\partial z_k}.
\end{align}
for all $z=(z_1,z_2,\ldots,z_n)\in \mathbb{P} \Delta(0_n;1_n)$ and for all $j,k,l\in\mathbb{Z}[1,n]$.

 Clearly $P_{jkl}(z)$ is holomorphic in $\mathbb{P}\Delta(0_n;1_n)$. Using (\ref{Th3:1.1}) to (\ref{Th3:1.3}), we see that $\Re\{P_{jkl}(z)\} > 0$. Also from \eqref{Th3:1.2} and \eqref{Th3:1.3},  we have $P_{jkl}(0)=1$.
 
 Now, we may assume that
 \begin{align}\label{Th3:1.4}
 \displaystyle	P_{jkl}(z)=1+\sum\limits_{m=1}^{\infty}\sum\limits_{|\beta|=m}P_{\beta}(j,k,l)z^{\beta}
 \end{align}
 for all $z\in \mathbb{P}\Delta(0_n;1_n)$ and for all $j,k,l\in\mathbb{Z}[1,n]$.
 
 Let $\omega_l=e^{\frac{2\pi \iota}{k}l}$, where $k\geq 1$ is an integer. We see that $\sum_{l=1}^k \omega_l^s=0$ if $s\leq k-1$. Let
 \begin{align*}
 \displaystyle	g_{jkl}(z)=\frac{1}{k}\sum\limits_{l=1}^k P_{jkl}\left(\omega_l z\right)
 \end{align*}
 for all $j,k,l\in\mathbb{Z}[1,n]$.
 The function $g_{jkl}$ is clearly holomorphic in $\mathbb{P}\Delta(0_n;1_n)$ such that $\Re\{g_{jkl}(z)\} > 0$ throughout the domain. Furthermore, we see that $g_{jkl}(0) =1$ and $g_{jkl}$ has the series expansion
 \begin{align*}
 \displaystyle	g_{jkl}(z)=1+\sum\limits_{m=1}^{\infty}\sum\limits_{|\beta|=mk} P_{\beta}(j,k,l)z^{\beta}
 \end{align*}
 for all $z\in \mathbb{P}\Delta(0_n;1_n)$.
 
 Let
 \begin{align}
 \displaystyle	\label{Eq-3.20}\phi_{jkl}(z)=\frac{g_{jkl}(z)-1}{g_{jkl}(z)+1}
 \end{align}
 for all $z\in \mathbb{P}\Delta(0_n;1_n)$. It is easy to verify that $|\phi_{jkl}(z)|<1$ for all $z\in \mathbb{P}\Delta(0_n;1_n)$ and $k$ is the zero-multiplicity of $\phi_{jkl}$ at $0$. We expand $\phi_{jkl}(z)$ to a Taylor series with respect to multi-index $\beta$,
 \begin{align}
 	\label{Eq-3.21} \phi_{jkl}(z)=\sum\limits_{m=k}^{\infty}\sum\limits_{|\beta|=m}Q_{\beta}(j,k,l)z^{\beta}.
 \end{align}
 
 By Lemma \ref{Lm-3.1}, we have
 \begin{align}
 \displaystyle	\label{Eq-3.22} |Q_{\beta}(j,k,l)|\leq 1
 \end{align}
 for multi-index $\beta$. 
 
 Now from \eqref{Eq-3.20} and \eqref{Eq-3.21}, we deduce that
 \begin{align*}
\displaystyle &\sum\limits_{|\beta|=k} P_{\beta}(j,k,l)z^{\beta}+\sum\limits_{m=2}^{\infty}\sum\limits_{|\beta|=mk} P_{\beta}(j,k,l)z^{\beta}\\=&2 \sum\limits_{m=k}^{\infty}\sum\limits_{|\beta|=m}Q_{\beta}(j,k,l)z^{\beta}\\&+
 	\left(\sum\limits_{|\beta|=k} P_{\beta}(j,k,l)z^{\beta}+\sum\limits_{m=2}^{\infty}\sum\limits_{|\beta|=mk} P_{\beta}(j,k,l)z^{\beta}\right)\left(\sum\limits_{m=k}^{\infty}\sum\limits_{|\beta|=m}Q_{\beta}(j,k,l)z^{\beta}\right).
 \end{align*}
 
 It then follows that
 \begin{align}\label{Eq-3.23} \displaystyle \sum\limits_{|\beta|=k} \left(P_{\beta}(j,k,l)-2Q_{\beta}(j,k,l)\right)z^{\beta}+\text{higher degree terms of}\; (z_1^{\beta_1},\ldots,z_n^{\beta_n})\equiv 0
 \end{align}
 holds for all $z\in \mathbb{P}\Delta(0_n;1_n)$. 
 
 Then by a simple calculation, we get from \eqref{Eq-3.23} that 
 \begin{align}
 \displaystyle	\label{Eq-3.24}\sum\limits_{|\beta|=k} \left(P_{\beta}(j,k,l)-2Q_{\beta}(j,k,l)\right)=0.
 \end{align}
 \noindent Using \eqref{Eq-3.22} and \eqref{Eq-3.24}, we conclude that
 \begin{align}\label{Th3:1.5}
 \displaystyle	\sum\limits_{|\beta|=k} |P_{\beta}(j,k,l)|\leq 2\sum\limits_{|\beta|=k} |Q_{\beta}(j,k,l)|\leq 2\sum\limits_{|\beta|=k} 1\leq 2\binom{k+n-1}{n-1},
 \end{align}
 where $\beta=(\beta_1,\beta_2,\ldots,\beta_n)$ such that $|\beta|=k$ and for all $j,k,l\in\mathbb{Z}[1,n]$.\vspace{1.2mm}


On the other hand, from \eqref{Th3:1.2} and \eqref{Th3:1.4}, we obtain
\begin{align}\label{Th3:1.6}
\displaystyle &\sum\limits_{m=2}^{\infty}\sum\limits_{|\beta|=m}\sum\limits_{l=1}^n\sum\limits_{j=1}^n\sum\limits_{k=1}^n\beta_{k} c_{\beta}z_1^{\beta_1}\ldots z_{j-1}^{\beta_{j-1}}z_{j}^{\beta_j}z_{j+1}^{\beta_{j+1}}\ldots z_{k-1}^{\beta_{k-1}}z_{k}^{\beta_k-1}z_{k+1}^{\beta_{k+1}}\ldots z_l^{\beta_l}\ldots z_n^{\beta_n}\\&+
\alpha\sum\limits_{m=2}^{\infty}\sum\limits_{|\beta|=m}\sum\limits_{l=1}^n\sum\limits_{j=1}^n\sum\limits_{k=1}^n\beta_{jk} c_{\beta}z_1^{\beta_1}\ldots z_{j-1}^{\beta_{j-1}}z_{j}^{\beta_j^*}z_{j+1}^{\beta_{j+1}}\ldots z_{k-1}^{\beta_{k-1}}z_{k}^{\beta_k^{**}}z_{k+1}^{\beta_{k+1}}\ldots z_l^{\beta_l+1}\ldots z_n^{\beta_n}\nonumber\\=&\sum\limits_{m=1}^{\infty}\sum\limits_{|\beta|=m}\sum\limits_{l=1}^n\sum\limits_{j=1}^n\sum\limits_{k=1}^nP_{\beta}(j,k,l)z^{\beta}\nonumber,
\end{align}
where $\beta=(\beta_1,\beta_2,\ldots,\beta_n)$ such that $|\beta|=m\geq 2$.

 Putting $z_1=z_2=\ldots=z_n=z$ on both sides of \eqref{Th3:1.6} and then comparing the coefficients, we get
\begin{align*}
\displaystyle \sum\limits_{|\beta|=m}\sum\limits_{l=1}^n\sum\limits_{j=1}^n\sum\limits_{k=1}^n
\left(\beta_k+\alpha\beta_{jk}\right)c_{\beta}=\sum\limits_{|\beta|=m}\sum\limits_{l=1}^n\sum\limits_{j=1}^n\sum\limits_{k=1}^nP_{\beta}(j,k,l),
\end{align*}
for all $\beta=(\beta_1,\beta_2,\ldots,\beta_n)$ such that $|\beta|=m\geq 2$. Consequently
\begin{align}\label{Th3:1.7}
&\sum\limits_{|\beta|=m}\sum\limits_{l=1}^n\sum\limits_{j=1}^n\sum\limits_{k=1}^n
\left(\beta_k+\alpha\beta_{jk}\right)a_{\beta}+\varepsilon \sum\limits_{|\beta|=m}\sum\limits_{l=1}^n\sum\limits_{j=1}^n\sum\limits_{k=1}^n
\left(\beta_k+\alpha\beta_{jk}\right)b_{\beta} \\=&\sum\limits_{|\beta|=m}\sum\limits_{l=1}^n\sum\limits_{j=1}^n\sum\limits_{k=1}^nP_{\beta}(j,k,l)\nonumber
\end{align}	
for all $\beta=(\beta_1,\beta_2,\ldots,\beta_n)$ such that $|\beta|=m\geq 2$. 

Now in view of \eqref{Th2:1.8a}, we conclude that
\begin{align}\label{Th3:1.8}
\displaystyle	\sum\limits_{|\beta|=m}\sum\limits_{l=1}^n\sum\limits_{j=1}^n\sum\limits_{k=1}^n\beta_{jk}a_{\beta}=nm(m-1)\sum\limits_{|\beta|=m}a_{\beta}
\end{align}
and
\begin{align}\label{Th3:1.9}
\displaystyle	\sum\limits_{|\beta|=m}\sum\limits_{l=1}^n\sum\limits_{j=1}^n\sum\limits_{k=1}^n\beta_{jk}b_{\beta}=nm(m-1)\sum\limits_{|\beta|=m}b_{\beta}
\end{align}
for all $\beta=(\beta_1,\beta_2,\ldots,\beta_n)$ such that $|\beta|=m\geq 2$. 

Using \eqref{Th3:1.8} and \eqref{Th3:1.9} to \eqref{Th3:1.7}, we find that
\begin{align}\label{Th3:1.10}
\displaystyle &(n^2m+\alpha nm(m-1))\sum\limits_{|\beta|=m}a_{\beta}+\varepsilon (n^2m+\alpha nm(m-1))\sum\limits_{|\beta|=m} b_{\beta}\\=&\sum\limits_{|\beta|=m}\sum\limits_{l=1}^n\sum\limits_{j=1}^n\sum\limits_{k=1}^nP_{\beta}(j,k,l)\nonumber
\end{align}
for all $\beta=(\beta_1,\beta_2,\ldots,\beta_n)$ such that $|\beta|=m\geq 2$. 

Applying \eqref{Th3:1.5} to \eqref{Th3:1.10}, we obtain
\begin{align}\label{Th3:1.11}
\displaystyle \left|\sum\limits_{|\beta|=m}a_{\beta}+\varepsilon \sum\limits_{|\beta|=m} b_{\beta}\right|\leq 
\frac{2n^2\binom{m+n-1}{n-1}}{\alpha m^2+(n-\alpha)m}
\end{align}
for every integer $m\ge2$ and every
$\varepsilon$ with $|\varepsilon|=1$. \vspace{1.2mm}
	
Suppose $A_{\beta}=\sum\limits_{|\beta|=m}a_{\beta}\neq 0$ and $B_{\beta}=\sum\limits_{|\beta|=m}b_{\beta}\neq 0$. If we choose 
\begin{align*}
\displaystyle \varepsilon=\frac{A_{\beta}}{|A_{\beta}|}\frac{\ol B_{\beta}}{|B_{\beta}|},
\end{align*}
then $|\varepsilon|=1$ and so from \eqref{Th3:1.11}, we have
\begin{align}\label{Th3:1.12}
\displaystyle |A_{\beta}+\varepsilon B_{\beta}|=|A_{\beta}|+|B_{\beta}|\leq \frac{2n^2\binom{m+n-1}{n-1}}{\alpha m^2+(n-\alpha)m}
\end{align}
for $\beta=(\beta_1,\beta_2,\ldots,\beta_n)$ such that $|\beta|=m\geq 2$. If either $A_{\beta}=0$ or $B_{\beta}=0$, then \eqref{Th3:1.12} also holds.
Therefore 
\begin{align}\label{Th3:1.13}
	\left|\sum\limits_{|\beta|=m}a_{\beta}\right|+\left|\sum\limits_{|\beta|=m}b_{\beta}\right|\leq \frac{2n^2\binom{m+n-1}{n-1}}{\alpha m^2+(n-\alpha)m},
\end{align}
for every integer $m\ge2$.

 On the other hand, we have
\begin{align}\label{Th3:1.14}
\left||A_{\beta}|-|B_{\beta}|\right|\leq |A_{\beta}+\varepsilon B_{\beta}|\leq \frac{2n^2\binom{m+n-1}{n-1}}{\alpha m^2+(n-\alpha)m}
\end{align}
for each integer $m\ge2$.

Therefore from (\ref{Th3:1.14}), we obtain
 \begin{align*}
\displaystyle \left|\;\left|\sum\limits_{|\beta|=m}a_{\beta}\right|-\left|\sum\limits_{|\beta|=m}b_{\beta}\right|\;\right|\leq \frac{2n^2\binom{m+n-1}{n-1}}{\alpha m^2+(n-\alpha)m},
\end{align*}
for every integer $m\ge2$.

It is clear from \eqref{Th3:1.13} that 
\begin{align*}
\displaystyle \left|\sum\limits_{|\beta|=m}a_{\beta}\right|\leq \dfrac{2n^2\binom{m+n-1}{n-1}}{\alpha m^2+(n-\alpha)m},
\end{align*}
for every integer $m\ge2$.\vspace{1.2mm}

To show that all the inequalities are sharp, we consider the following function
\begin{align}\label{Ex1}
\displaystyle f_2(z)=\sum\limits_{j=1}^n z_j+\sum\limits_{m=2}^{\infty}\sum\limits_{|\beta|=m} b_{\beta} z^{\beta},
\end{align}
where 
\begin{align*}
b_{\beta}=\frac{2n^2}{\alpha m^2+(n-\alpha)m}
\end{align*}
for all multi-index $\beta=(\beta_1,\beta_2,\ldots,\beta_n)$ such that $|\beta|=m$. 
It is easy to see that $f_2\in \mathcal{W}_{\mathcal{H}_n^0}(\alpha)$, and 
\begin{align*}
\displaystyle \left|\sum\limits_{|\beta|=m}a_{\beta}(f_2)\right|=\frac{2\binom{m+n-1}{n-1}n^2}{\alpha m^2+(n-\alpha)m}.
\end{align*} 


\end{proof}

In the following result, we have established the growth estimates for the functions in the class $\mathcal{W}_{\mathcal{H}_n^0}(\alpha)$.

\begin{theo}\label{Th-1.4}Let $f=h+\ol g\in \mathcal{W}_{\mathcal{H}_n^0}(\alpha)$ with $0<\alpha\leq 1$ and be given by (\ref{Eq 1.7}). Then for $z\in \mathbb{P} \Delta(0;1/n)$, we have
\begin{align*}
\displaystyle \left|\sum\limits_{j=1}^nz_j\right|+2\sum\limits_{m=2}^{\infty} \frac{(-1)^{m-1}n^m||z||_{\infty}^{m}}{\alpha m^2+(1-\alpha)m}\leq |f(z)|\leq n||z||_{\infty}+2\sum\limits_{m=2}^{\infty} \frac{n^m||z||_{\infty}^{m}}{\alpha m^2+(1-\alpha)m}.
\end{align*}
Both inequalities are sharp.
\end{theo}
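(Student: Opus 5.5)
The plan is to expand $f$ into homogeneous pieces and bound the pieces one degree at a time. By (\ref{Eq 1.7}) we write
\[
f(z)=\sum_{j=1}^n z_j+\sum_{m=2}^{\infty}\bigl(P_m(z)+\ol{Q_m(z)}\bigr),
\]
with $P_m,Q_m$ as in (\ref{Th2:1.3}). Put $r=\|z\|_\infty<1/n$.

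\textbf{Step 1: a per-degree estimate.} I aim for a bound of the form
\[
|P_m(z)|+|Q_m(z)|\le \frac{2\,n^m r^m}{\alpha m^2+(1-\alpha)m}.
\]
I would not go through Theorem \ref{Th-1.3}: its bound on $\sum_{|\beta|=m}a_\beta$ carries the factor $\binom{m+n-1}{n-1}$, which does not match the target. Instead I would restrict to a complex line. Fix $z$, set $\zeta=z/r$ (so $\|\zeta\|_\infty=1$), and consider the one-variable harmonic function
\[
\psi(t)=\frac1n f(t\zeta)=\frac1n h(t\zeta)+\ol{\tfrac1n g(t\zeta)},\qquad t\in\mathbb{D}.
\]
The key claim is that $\psi$ (or a rescaled version of it) lies in the one-dimensional class $\mathcal{W}^0_{\mathcal{H}}(\alpha)$. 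To check this, one computes
\[
H'(t)+\alpha t H''(t)=\frac1n\sum_k \zeta_k\Bigl(\partial_k h+\alpha\sum_j t\zeta_j\,\partial_j\partial_k h\Bigr).
\]
The aim is to dominate the corresponding $g$-expression by the definition of $\mathcal{W}_{\mathcal{H}_n^0}(\alpha)$, averaging over $j,k$ and using the freedom in $l$ to match $t\zeta_j$ with $z_l$.

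\textbf{Step 2: the obstacle and a fallback.} This is where I expect the main difficulty. The unimodular factors $\zeta_k$ spoil the real-part inequality, and the normalisation $\psi'(0)=\frac1n\sum_k\zeta_k$ need not equal $1$. The condition $z\in\mathbb{P}\Delta(0;1/n)$ and the factor $n^m$ suggest a cruder route that sidesteps both problems. Go back to Theorem \ref{Th-1.1}: $F_\varepsilon=h+\varepsilon g\in\mathcal{W}_n(\alpha)$. Along the diagonal direction, the same averaging as in (\ref{Th3:1.10}) gives that
\[
\frac{1}{n^2}\sum_{j,k}\Bigl(\partial_k F_\varepsilon+\alpha z_l\,\partial_j\partial_k F_\varepsilon\Bigr)
\]
has positive real part and equals $1$ at the origin. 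A Carathéodory-type estimate, as in (\ref{Th3:1.5}), then bounds its homogeneous coefficients by $2$. This yields, for each $\varepsilon$,
\[
|P_m(z)+\varepsilon Q_m(z)|\le \frac{2(nr)^m}{\alpha m^2+(1-\alpha)m}.
\]
The extra $n^m$ comes from $|\sum_k z_k|\le nr$ raised to the $m$-th power, and the denominator comes from integrating the one-variable relation $(1-\alpha)\phi'+\alpha(t\phi')'$. Choosing $\varepsilon$ to align the phases of $P_m(z)$ and $\ol{Q_m(z)}$, exactly as in the proof of (\ref{Th3:1.12}), gives the per-degree bound of Step 1.

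\textbf{Step 3: summation.} For the upper bound, the triangle inequality gives
\[
|f(z)|\le \Bigl|\sum_j z_j\Bigr|+\sum_{m\ge2}\bigl(|P_m(z)|+|Q_m(z)|\bigr)\le nr+2\sum_{m\ge2}\frac{n^m r^m}{\alpha m^2+(1-\alpha)m},
\]
and the series converges because $nr<1$. For the lower bound, I would imitate Theorem D. Integrate $f$ along a radial segment, use a lower estimate of $\Re(h'+\alpha t h'')-|g'+\alpha t g''|$ by the extremal Carathéodory function $(1-\rho)/(1+\rho)$ with $\rho=nr$, and convert via the one-variable representation $\phi'(t)=\int_0^1 p(ts^\alpha)\,ds$. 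This produces the alternating series, with $|\sum_j z_j|$ as the leading term. The hypothesis $0<\alpha\le1$ is what makes the alternating terms decrease, so the comparison is valid.

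\textbf{Step 4: sharpness.} I would use $f_2$ from (\ref{Ex1}), restricted to the diagonal $z=(x,\dots,x)$ for the upper bound and $z=(-x,\dots,-x)$ for the lower bound, and check that equality holds termwise. Whether the coefficients of $f_2$ actually produce the denominator $\alpha m^2+(1-\alpha)m$ rather than $\alpha m^2+(n-\alpha)m$ is the second point I expect to need care. I would try adjusting the extremal function to $\sum_j z_j+\sum_m \frac{2}{\alpha m^2+(1-\alpha)m}\bigl(\sum_j z_j\bigr)^m$.
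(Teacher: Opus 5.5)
\textbf{Step 2 gap (repairable).} Step~2 does not prove the per-degree estimate you need. Averaging $\frac1{n^2}\sum_{j,k}$ at a fixed $l$ gives, in degree $m-1$, the quantity $\frac1n\sum_k\partial_kR_m+\frac{\alpha}{n^2}z_l\sum_{j,k}\partial_j\partial_kR_m$, where $R_m=P_m+\varepsilon Q_m$. These are derivatives in the direction $(1,\dots,1)$. They become a multiple of $R_m(z)$ only on the diagonal. The remark that ``$n^m$ comes from $|\sum_kz_k|^m$'' is not an argument, because $P_m$ is not a function of $\sum_kz_k$.

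The repair is to fix $k$, take $l=j$, and average over $j$ only. For $\|\zeta\|_\infty\le1$, the slice $t\mapsto\partial_kF_\varepsilon(t\zeta)+\alpha t\zeta_j\,\partial_j\partial_kF_\varepsilon(t\zeta)$ is a Carath\'eodory function, so $|\partial_kR_m(\zeta)+\alpha\zeta_j\partial_j\partial_kR_m(\zeta)|\le2$. Summing over $j$ and using Euler's identity $\sum_j\zeta_j\partial_j\partial_kR_m=(m-1)\partial_kR_m$ gives $|\partial_kR_m(\zeta)|\le\frac{2n}{n+\alpha(m-1)}$. Then $mR_m=\sum_k\zeta_k\partial_kR_m$, together with your choice of $\varepsilon$, yields
\[
|P_m(z)|+|Q_m(z)|\le\frac{2n^2\|z\|_\infty^m}{\alpha m^2+(n-\alpha)m}\le\frac{2(n\|z\|_\infty)^m}{\alpha m^2+(1-\alpha)m}.
\]

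With this, your triangle-inequality upper bound is a complete proof. It is also preferable to the paper's route. The paper restricts $f$ to the line through $z/(n\|z\|_\infty)$ and asserts \eqref{Th4:1.7}. That identity already fails at $t=0$, where its left side is $\sum_kz_k/(n\|z\|_\infty)$ rather than $1$.

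\textbf{Step 4 cannot succeed.} The same display shows this. At $m=2$ it gives $\frac{n^2}{n+\alpha}\|z\|_\infty^2<\frac{n^2}{1+\alpha}\|z\|_\infty^2$. Hence for $n\ge2$ and $z\ne0$ the stated upper bound is never attained. In particular the paper's $f_4$ is not in $\mathcal{W}_{\mathcal{H}_n^0}(\alpha)$. Your modified extremal is not even defined on $\mathbb{P}\Delta(0;1)$: its series in $s=\sum_jz_j$ has radius $1$, while $|s|$ ranges up to $n$.

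\textbf{The lower bound is a genuine gap.} Your radial integration needs a one-variable Carath\'eodory function controlling $\Re(\cdot)-|\cdot|$ along the slice. That is exactly what you observed fails in a general direction, because of the unimodular factors $\zeta_k$ and the normalisation $\sum_k\zeta_k\neq1$. So no representation $\phi'(t)=\int_0^1p(ts^\alpha)\,ds$ is available.

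The per-degree estimate does not rescue it. Combined with the triangle inequality it gives only
\[
\Bigl|\sum_jz_j\Bigr|-\sum_{m\ge2}\frac{2n^2\|z\|_\infty^m}{\alpha m^2+(n-\alpha)m},
\]
with no alternation. For $n=2$, $\alpha=1$ and $n\|z\|_\infty\to1$, this subtracts about $0.45$, while the claim allows only $2\sum_{m\ge2}(-1)^m/m^2\approx0.355$.

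You cannot borrow the missing step from the paper either. Its argument passes through
\[
|f(z)|\ge n\|z\|_\infty+2\sum_{m\ge2}\frac{(-1)^{m-1}(n\|z\|_\infty)^m}{\alpha m^2+(1-\alpha)m},
\]
which is false: for $f=\sum_jz_j$ and $z=(r,-r,0,\dots,0)$ the left side is $0$ and the right side is positive. Your proposal therefore does not establish the stated lower bound. A genuinely new argument is needed to handle points whose coordinates have different arguments.
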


\begin{proof} Let $f=h+\ol g\in \mathcal{W}_{\mathcal{H}_n^0}(\alpha)$. Then from Theorem \ref{Th-1.1}, we see that the function $F_{\varepsilon}=h+\varepsilon g$ belongs to $\mathcal{W}_{n}(\alpha)$ for each $\varepsilon\; (|\varepsilon|=1)$, and further 
\begin{align*}
\Re\left(\frac{\partial F_{\varepsilon}(z)}{\partial z_k}+\alpha z_l\frac{\partial^2 F_{\varepsilon}(z)}{\partial z_j\partial z_k}\right)>0	
\end{align*}
for all $z=(z_1,z_2,\ldots,z_n)\in \mathbb{P} \Delta(0;1/n)$ and for all $j,k,l\in\mathbb{Z}[1,n]$. Then there exists a holomorphic function $\omega_{jkl}(z)$ with $\omega_{jkl}(0)=0$ and $|\omega_{jkl}(z)|<1$ for all $z=(z_1,z_2,\ldots,z_n)\in \mathbb{P} \Delta(0;1/n)$ such that 
\begin{align}\label{Th4:1.1}	
\frac{\partial F_{\varepsilon}(z)}{\partial z_k}+\alpha z_l\frac{\partial^2 F_{\varepsilon}(z)}{\partial z_j\partial z_k}=\frac{1+\omega_{jkl}(z)}{1-\omega_{jkl}(z)}
\end{align}
for all $z=(z_1,z_2,\ldots,z_n)\in \mathbb{P} \Delta(0;1/n)$, where $j,k,l\in\mathbb{Z}[1,n]$.

Let $z\in \mathbb{P}\Delta(0;1/n)$ be fixed in such a way that $z\neq 0$. Obviously $\|z\|_{\infty}<1/n$. 
We define 
\begin{align}\label{Th4:1.2}
\tilde h(t)=h\left(\frac{z}{n||z||_{\infty}}t\right)=\frac{1}{n||z||_{\infty}}\left(\sum\limits_{j=1}^n z_j\right)t+\sum\limits_{m=2}^{\infty}P_m\left(\frac{z}{n||z||_{\infty}}\right)t^m
\end{align}
and
\begin{align}\label{Th4:1.3}
\tilde g(t)=g\left(\frac{z}{n||z||_{\infty}}t\right)=\sum\limits_{k=2}^{\infty} Q_m\left(\frac{z}{n||z||_{\infty}}\right)t^m,
\end{align}
where $h(z)$ and $g(z)$ are defined by (\ref{Th2:1.2}) and $t\in\mathbb{C}$ such that $|t|<1$. For a fixed $z\in \mathbb{P}\Delta(0;1/n)$, we can say that $\tilde g(t)$ and $\tilde h(t)$ are analytic in $|t|<1$. Obviously for a fixed $z\in \mathbb{P}\Delta(0;1/n)$, the following function
\begin{align}\label{Th4:1.4}
\tilde F_{\varepsilon}(t)=F_{\varepsilon}\left(\frac{z}{n||z||_{\infty}}t\right)=\tilde h(t)+\varepsilon \tilde g(t)
\end{align}
is analytic in $|t|<1$, where $\tilde h(t)$ and $\tilde g(t)$ are defined by (\ref{Th4:1.2}) and (\ref{Th4:1.3}). Also from (\ref{Th4:1.2}) and (\ref{Th4:1.3}), we see that
\begin{align}\label{Th4:1.5}
\tilde f(t)=f\left(\frac{z}{n||z||_{\infty}}t\right)=\tilde h(t)+\ol{\tilde g(t)}.
\end{align}

We define 
\begin{align}\label{Th4:1.6}
\tilde \omega_{jkl}(t)=\omega_{jkl}\left(\frac{z}{n||z||_{\infty}}t\right),
\end{align}
which is analytic in $|t|<1$, where $j,k,l\in\mathbb{Z}[1,n]$.
In terms of one variable $t$, we deduce from (\ref{Th4:1.1}) that
\begin{align}\label{Th4:1.7}
\frac{d \tilde F_{\varepsilon}(t)}{d t}+\alpha t\frac{d^2 \tilde F_{\varepsilon}(t)}{d t^2}=\frac{1+\tilde \omega_{jkl}(t)}{1-\tilde \omega_{jkl}(t)}.
\end{align}

Applying the argument used in the proof of Theorem 4.4 of \cite{Ghosh-Allu-2019}, we obtain 
\begin{align}\label{Th4:1.8}
\left|\frac{d \tilde F_{\varepsilon}(t)}{d t}\right|=\left|\frac{d \tilde h(t)}{d t}+\varepsilon \frac{d \tilde g(t)}{d t}\right|\leq 1+2\sum\limits_{k=1}^{\infty}\frac{|t|^{k}}{1+k\alpha}
\end{align}
and
\begin{align}\label{Th4:1.9}
\left|\frac{d \tilde F_{\varepsilon}(t)}{d t}\right|=\left|\frac{d \tilde h(t)}{d t}+\varepsilon \frac{d \tilde g(t)}{d t}\right|\geq 1+2\sum\limits_{k=1}^{\infty}\frac{(-1)^k|t|^{k}}{1+k\alpha}.
\end{align}

Since $\varepsilon\;(|\varepsilon|=1)$ is arbitrary, it follows from (\ref{Th4:1.8}) and (\ref{Th4:1.9}) that
\begin{align}\label{Th4:1.10}
\left|\frac{d \tilde h(t)}{d t}\right|+\left|\frac{d \tilde g(t)}{d t}\right|\leq 1+2\sum\limits_{k=1}^{\infty}\frac{|t|^{k}}{1+k\alpha}
\end{align}
and
\begin{align}\label{Th4:1.11}
\left|\frac{d \tilde h(t)}{d t}\right|-\left|\frac{d \tilde g(t)}{d t}\right|\geq 1+2\sum\limits_{k=1}^{\infty}\frac{(-1)^k|t|^{k}}{1+k\alpha}.
\end{align}

Let $\Gamma$ be the radial segment from $0$ to $n||z||_{\infty}$. Then from (\ref{Th4:1.5}) and (\ref{Th4:1.10}), we have
\begin{align}\label{Th4:1.12}
|\tilde f(n\|z\|_{\infty})|=\left|\int_{\Gamma} \frac{\partial \tilde f(s)}{\partial s}d s+\frac{\partial \tilde f(s)}{\partial \ol s}d \ol s\right|\leq&
\int\limits_{0}^{n||z||_{\infty}}\left\lbrack \left|\frac{d \tilde h(s)}{d s}\right|+\left|\frac{d \tilde g(s)}{d s}\right|\right\rbrack |d s|\\\leq&
\int\limits_{0}^{n||z||_{\infty}}\left(1+2\sum\limits_{k=1}^{\infty}\frac{\xi^{k}}{1+k\alpha}\right)d\xi\nonumber\\=&
n||z||_{\infty}+2\sum\limits_{k=1}^{\infty} \frac{n^{k+1}||z||_{\infty}^{k+1}}{(k+1)(1+k\alpha)}\nonumber\\=&
n||z||_{\infty}+2\sum\limits_{m=2}^{\infty} \frac{n^m||z||_{\infty}^{m}}{\alpha m^2+(1-\alpha)m}.\nonumber
\end{align}
\begin{figure}[t]
	\centering
	
	\begin{tikzpicture}
		
		\begin{axis}[
			width=0.82\textwidth,
			height=7cm,
			xmin=0,
			xmax=1,
			ymin=0,
			ymax=2.4,
			xlabel={$r=n|z|_{\infty}$},
			ylabel={Bound value},
			grid=major,
			legend style={
				draw=black,
				fill=white,
				font=\small,
				at={(0.03,0.97)},
				anchor=north west
			},
			tick label style={font=\small},
			label style={font=\small},
			every axis plot/.append style={
				very thick,
				smooth,
				samples=200
			}
			]
			
			\addplot[blue]
			{1.75*x+0.55*x^2};
			
			\addlegendentry{Upper bound}
			
			\addplot[red,dashed]
			{x+0.18*x^2-0.22*x^3};
			
			\addlegendentry{Lower bound}
			
		\end{axis}
		
	\end{tikzpicture}
	
	\caption{Schematic illustration of the growth bounds in
		Theorem~\ref{Th-1.4}.}
	
	\label{fig:GrowthBoundTh14}
	
\end{figure}

Consequently from (\ref{Th4:1.5}) and (\ref{Th4:1.12}), we get
\begin{align*}
\displaystyle |f(z)|\leq n||z||_{\infty}+2\sum\limits_{m=2}^{\infty} \frac{n^m||z||_{\infty}^{m}}{\alpha m^2+(1-\alpha)m},
\end{align*}
which also holds for $z=0$.

Again from (\ref{Th4:1.5}) and (\ref{Th4:1.11}), we have
\begin{align*}
\displaystyle |\tilde f(n\|z\|_{\infty})|=&\left|\int_{\Gamma} \frac{\partial \tilde f(s)}{\partial s}d s+\frac{\partial \tilde f(s)}{\partial \ol s}d \ol s\right|\\\geq&
\int\limits_{0}^{n||z||_{\infty}}\left\lbrack \left|\frac{d \tilde h(s)}{d s}\right|-\left|\frac{d \tilde g(s)}{d s}\right|\right\rbrack |d s|\nonumber\\\geq&
n||z||_{\infty}+2\sum\limits_{m=2}^{\infty} \frac{(-1)^{m-1}n^m||z||_{\infty}^{m}}{\alpha m^2+(1-\alpha)m}.\nonumber
\end{align*}
Consequently
\begin{align*}
\displaystyle |f(z)|\geq n||z||_{\infty}+2\sum\limits_{m=2}^{\infty} \frac{(-1)^{m-1}n^m||z||_{\infty}^{m}}{\alpha m^2+(1-\alpha)m},
\end{align*}
which also holds for $z=0$. 

Since $|\sum_{j=1}^n z_j|\leq n\|z\|_{\infty}$, we have
\begin{align*}
	\displaystyle |f(z)|\geq \left|\sum\limits_{j=1}^nz_j\right|+2\sum\limits_{m=2}^{\infty} \frac{(-1)^{m-1}n^m||z||_{\infty}^{m}}{\alpha m^2+(1-\alpha)m}.
\end{align*}

Thus the desired inequalities are established.\vspace{2mm}



To show that the bounds are sharp, we consider the following functions
\begin{align*}
\displaystyle f_4(z)=\sum\limits_{j=1}^n z_j+\sum\limits_{m=2}^{\infty}\sum\limits_{|\beta|=m}a_{\beta}z^{\beta}
\end{align*}
and
\begin{align*} 
\displaystyle f_5(z)=\sum\limits_{j=1}^n z_j+\sum\limits_{m=2}^{\infty}\sum\limits_{|\beta|=m} b_{\beta}z^{\beta},
\end{align*}
where
\begin{align*}
\displaystyle a_{\beta}=\frac{2n^m}{\binom{m+n-1}{n-1}(\alpha m^2+(1-\alpha)m)}
\end{align*}
and
\begin{align*}
\displaystyle	b_{\beta}=\frac{2(-1)^{m-1}n^m}{\binom{m+n-1}{n-1}(\alpha m^2+(1-\alpha)m)}
\end{align*}
for every integer $m\geq 2$. 

It is easy to verify that $f_4,f_5\in \mathcal{W}_{\mathcal{H}_n^0}(\alpha)$.\vspace{1.2mm}

Now for the point $z=(r,r,\ldots,r)$, where $r<1/n$, we find that
\begin{align*}
\displaystyle |f_4(z)|=n||z||_{\infty}+\sum\limits_{m=2}^{\infty} \frac{2n^m||z||_{\infty}^{m}}{\alpha m^2+(1-\alpha)m}
\end{align*}
and
\begin{align*}
\displaystyle |f_5(z)|=\left|\sum\limits_{j=1}^n z_j\right|+\sum\limits_{m=2}^{\infty} \frac{2(-1)^{m-1}n^m||z||_{\infty}^{m}}{\alpha m^2+(1-\alpha)m}.
\end{align*}
	\begin{center}
	\begin{tabular}{cc}
	\includegraphics[width=\textwidth]{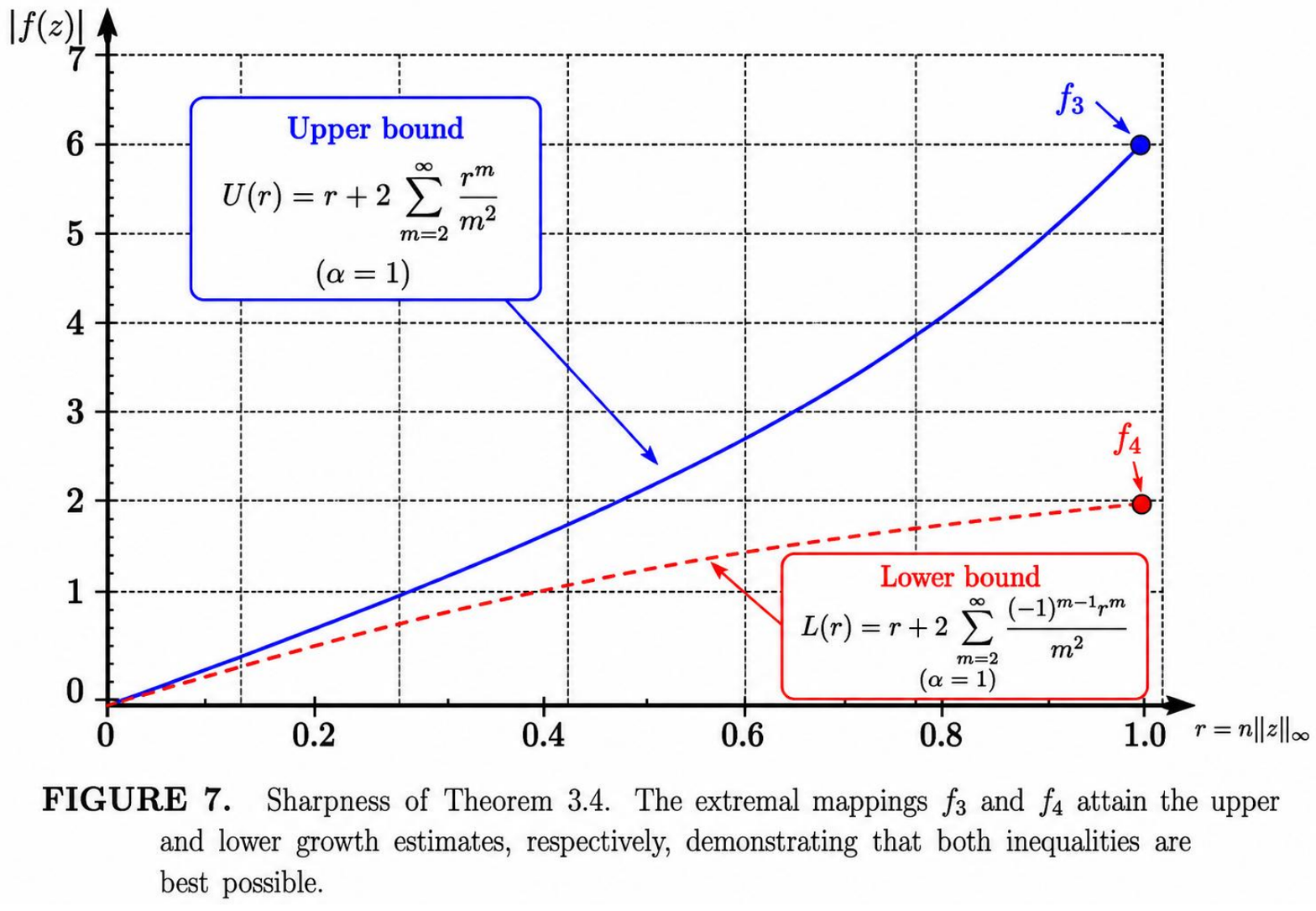}\\ 
	\end{tabular}
\end{center}
\end{proof}
			
\begin{theo}\label{Th-1.5}Let $f=h+\ol g\in \mathcal{H}_n^0$ and be given by (\ref{Eq 1.7}). If
\begin{align}\label{Th5:1.0}
\sum\limits_{m=2}^{\infty}\sum\limits_{|\beta|=m}n\left(m^2\alpha+(n-\alpha)m\right)\left(|a_{\beta}|+|b_{\beta}|\right)<1,
\end{align}
then $f\in \mathcal{W}_{\mathcal{H}_n^0}(\alpha)$.
\end{theo}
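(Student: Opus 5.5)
Since only a sufficient condition is claimed, I plan to verify the defining inequality of $\mathcal{W}_{\mathcal{H}_n^0}(\alpha)$ directly, term by term, with the triangle inequality. The two quantities to compare are
\begin{align*}
\Re\left(\frac{\partial h(z)}{\partial z_k}+\alpha z_l\frac{\partial^2 h(z)}{\partial z_j\partial z_k}\right)\quad\text{and}\quad \left|\frac{\partial g(z)}{\partial z_k}+\alpha z_l\frac{\partial^2 g(z)}{\partial z_j\partial z_k}\right|
\end{align*}
for fixed $j,k,l\in\mathbb{Z}[1,n]$ and $z\in\mathbb{P}\Delta(0;1)$. The paper already has the series expansions (\ref{Th2:1.5a}), (\ref{Th2:1.5}) and (\ref{Th2:1.8c}). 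Each monomial $z^{\beta}$ with $|\beta|=m$ contributes $(\beta_k+\alpha\beta_{jk})a_{\beta}$, times a monomial of modulus $<1$, to the $h$-expression. The same monomial contributes $(\beta_k+\alpha\beta_{jk})b_{\beta}$, times such a monomial, to the $g$-expression.

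Using $\Re(1+w)\geq 1-|w|$, it suffices to show
\begin{align*}
\sum_{m=2}^{\infty}\sum_{|\beta|=m}(\beta_k+\alpha\beta_{jk})\bigl(|a_{\beta}|+|b_{\beta}|\bigr)<1
\end{align*}
for every fixed triple $(j,k,l)$. Strictness is automatic, because $|z^{\gamma}|<1$ inside the polydisk and the hypothesis (\ref{Th5:1.0}) is already strict.

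The key step is the combinatorial bound $\beta_k+\alpha\beta_{jk}\leq n(\alpha m^2+(n-\alpha)m)$ for each individual $(j,k)$, uniformly in $l$. This follows from $\beta_k\leq m$ and $\beta_{jk}\leq \beta_j\beta_k\leq m^2$ (or $\beta_j(\beta_j-1)\leq m(m-1)$). These give $\beta_k+\alpha\beta_{jk}\leq m+\alpha m^2$. Then $m+\alpha m^2\leq n\alpha m^2+n(n-\alpha)m$ holds since $n\geq1$ and $n(n-\alpha)m+ (n-1)\alpha m^2 \geq m$; indeed $n^2m-n\alpha m+n\alpha m^2-\alpha m^2\geq m$ reduces, for $n\ge1$, $m\ge2$, to $(n^2-1)m+\alpha m(n-1)(m-1)\ge0$ after regrouping. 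With this bound, the displayed sum is dominated by the left side of (\ref{Th5:1.0}), which is less than $1$.

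I expect the main obstacle to be exactly this coefficient comparison: I need to confirm that the weight $n(m^2\alpha+(n-\alpha)m)$ in the hypothesis really dominates the per-triple weights $\beta_k+\alpha\beta_{jk}$ in every case of (\ref{ll.1}). The alternative, summing over $j,k,l$ as in (\ref{Th2:1.8a}), only controls an average, whereas the definition requires each triple separately. So I will use the crude pointwise bound, check the $j=k$ and $j\neq k$ cases, and verify the regrouped inequality carefully. After that the conclusion $f\in\mathcal{W}_{\mathcal{H}_n^0}(\alpha)$ is immediate. Alternatively, one can phrase the argument via Theorem \ref{Th-1.1}: show $\Re(\partial_k F_\varepsilon+\alpha z_l\partial_j\partial_k F_\varepsilon)>0$ for all $|\varepsilon|=1$ with $|a_\beta+\varepsilon b_\beta|\le|a_\beta|+|b_\beta|$.
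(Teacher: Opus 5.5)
Your strategy is the same as the paper's: apply the triangle inequality to (\ref{Th2:1.5a}), (\ref{Th2:1.5}) and (\ref{Th2:1.8c}), then dominate the per-triple weight $\beta_k+\alpha\beta_{jk}$ by the weight in (\ref{Th5:1.0}). However, the combinatorial step as you wrote it is false in part of the theorem's range. The inequality $m+\alpha m^2\le n\left(\alpha m^2+(n-\alpha)m\right)$ fails for $n=1$ and $\alpha>0$, since the difference is
\[
n\left(\alpha m^2+(n-\alpha)m\right)-(m+\alpha m^2)=(n^2-1)m+\alpha m\bigl((n-1)m-n\bigr),
\]
which equals $-\alpha m<0$ when $n=1$. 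For example, $n=1$, $\alpha=1$, $m=2$ would require $6\le 4$. Your regrouped expression $(n^2-1)m+\alpha m(n-1)(m-1)$ exceeds the true difference by exactly $\alpha m$. For $n\ge 2$ the inequality does hold, because $(n-1)m-n\ge n-2\ge 0$. So the failure is only at $n=1$, the case where the theorem reduces essentially to Theorem E.

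The loss comes from the crude estimate $\beta_{jk}\le m^2$. When $n=1$ the only weight is exactly $m+\alpha m(m-1)$, which equals the hypothesis weight, so there is no slack to absorb it. The repair is to use $\beta_{jk}\le m(m-1)$ in both cases of (\ref{ll.1}). For $j\ne k$ this holds because $\beta_j\beta_k\le m^2/4\le m(m-1)$ when $m\ge 2$. This gives $\beta_k+\alpha\beta_{jk}\le \alpha m^2+(1-\alpha)m$, and then
\[
n\left(\alpha m^2+(n-\alpha)m\right)-\left(\alpha m^2+(1-\alpha)m\right)=(n-1)m\bigl(\alpha(m-1)+n+1\bigr)\ge 0 .
\]

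Your reason for rejecting the summation route is also mistaken, and that route is exactly the paper's proof. Since $\alpha\ge 0$, every summand $\beta_k+\alpha\beta_{jk}$ is nonnegative. Hence the weight of any single triple is at most the full sum $\sum_{l,j,k}(\beta_k+\alpha\beta_{jk})$, which bounds the maximum rather than merely the average. That sum equals $n^2m+n\alpha m(m-1)=n\left(\alpha m^2+(n-\alpha)m\right)$ exactly, because $\sum_{j,k}\beta_{jk}=\bigl(\sum_j\beta_j\bigr)^2-\sum_j\beta_j=m(m-1)$. This explains where the weight in (\ref{Th5:1.0}) comes from, and it works uniformly for all $n\ge 1$ with no case analysis. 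With either fix, the rest of your argument goes through, including the strictness.
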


\begin{proof} Let $f=h+\ol g\in \mathcal{H}_n^0$. Now from (\ref{Th2:1.8c}), we have
\begin{align}\label{Th5:1.1}
&\Re\left(\frac{\partial h(z)}{\partial z_k}+\alpha z_l\frac{\partial^2 h(z)}{\partial z_j\partial z_k}\right)\\=&1+
\Re\left(\sum\limits_{m=2}^{\infty}\sum\limits_{|\beta|=m}\beta_{k} a_{\beta}z_1^{\beta_1}\ldots z_{j-1}^{\beta_{j-1}}z_{j}^{\beta_j}z_{j+1}^{\beta_{j+1}}\ldots z_{k-1}^{\beta_{k-1}}z_{k}^{\beta_k-1}z_{k+1}^{\beta_{k+1}}\ldots z_l^{\beta_l}\ldots z_n^{\beta_n}\right)\nonumber\\&+\alpha 
\Re\left(\sum\limits_{m=2}^{\infty}\sum\limits_{|\beta|=m}\beta_{jk} a_{\beta}z_1^{\beta_1}\ldots z_{j-1}^{\beta_{j-1}}z_{j}^{\beta_j^*}z_{j+1}^{\beta_{j+1}}\ldots z_{k-1}^{\beta_{k-1}}z_{k}^{\beta_k^{**}}z_{k+1}^{\beta_{k+1}}\ldots z_l^{\beta_l+1}\ldots z_n^{\beta_n}\right)\nonumber\\\geq &
1-\left|\sum\limits_{m=2}^{\infty}\sum\limits_{|\beta|=m}\beta_{k} a_{\beta}z_1^{\beta_1}\ldots z_{j-1}^{\beta_{j-1}}z_{j}^{\beta_j}z_{j+1}^{\beta_{j+1}}\ldots z_{k-1}^{\beta_{k-1}}z_{k}^{\beta_k-1}z_{k+1}^{\beta_{k+1}}\ldots z_l^{\beta_l}\ldots z_n^{\beta_n}\right|\nonumber\\&-
\alpha 
\left|\sum\limits_{m=2}^{\infty}\sum\limits_{|\beta|=m}\beta_{jk} a_{\beta}z_1^{\beta_1}\ldots z_{j-1}^{\beta_{j-1}}z_{j}^{\beta_j^*}z_{j+1}^{\beta_{j+1}}\ldots z_{k-1}^{\beta_{k-1}}z_{k}^{\beta_k^{**}}z_{k+1}^{\beta_{k+1}}\ldots z_l^{\beta_l+1}\ldots z_n^{\beta_n}\right|\nonumber\\\geq &
1-\sum\limits_{m=2}^{\infty}\sum\limits_{|\beta|=m}\beta_{k} \left|a_{\beta}\right|-\alpha 
\sum\limits_{m=2}^{\infty}\sum\limits_{|\beta|=m}\beta_{jk} \left|a_{\beta}\right|\nonumber\\\geq&
1-\sum\limits_{m=2}^{\infty}\sum\limits_{|\beta|=m}\sum\limits_{l=1}^n\sum\limits_{k=1}^n\sum\limits_{j=1}^n\left(\beta_{k}+\alpha \beta_{jk}\right)|a_{\beta}|\nonumber
\end{align}
for all $j,k,l\in\mathbb{Z}[1,n]$.
Using (\ref{Th3:1.10}) together with (\ref{Th5:1.1}), we get	
\begin{align}\label{Th5:1.2}
\Re\left(\frac{\partial h(z)}{\partial z_k}+\alpha z_l\frac{\partial^2 h(z)}{\partial z_j\partial z_k}\right)\geq 1-\sum\limits_{m=2}^{\infty}\sum\limits_{|\beta|=m}n\left(m^2\alpha+(n-\alpha)m\right)|a_{\beta}|
\end{align}	
for all $j,k,l\in\mathbb{Z}[1,n]$.

Again from (\ref{Th2:1.5a}) and (\ref{Th2:1.5}), we deduce that
\begin{align}\label{Th5:1.3}
&\left|\frac{\partial g(z)}{\partial z_k}+\alpha z_l\frac{\partial^2 g(z)}{\partial z_j\partial z_k}\right|\\\leq&
\sum\limits_{m=2}^{\infty}\sum\limits_{|\beta|=m}\beta_{k} \left|b_{\beta}z_1^{\beta_1}\ldots z_{j-1}^{\beta_{j-1}}z_{j}^{\beta_j}z_{j+1}^{\beta_{j+1}}\ldots z_{k-1}^{\beta_{k-1}}z_{k}^{\beta_k-1}z_{k+1}^{\beta_{k+1}}\ldots z_l^{\beta_l}\ldots z_n^{\beta_n}\right|\nonumber\\&+\alpha 
\sum\limits_{m=2}^{\infty}\sum\limits_{|\beta|=m}\beta_{jk} \left|b_{\beta}z_1^{\beta_1}\ldots z_{j-1}^{\beta_{j-1}}z_{j}^{\beta_j^*}z_{j+1}^{\beta_{j+1}}\ldots z_{k-1}^{\beta_{k-1}}z_{k}^{\beta_k^{**}}z_{k+1}^{\beta_{k+1}}\ldots z_l^{\beta_l+1}\ldots z_n^{\beta_n}\right|\nonumber
\\\leq &
\sum\limits_{m=2}^{\infty}\sum\limits_{|\beta|=m}\beta_{k} \left|b_{\beta}\right|+\alpha 
\sum\limits_{m=2}^{\infty}\sum\limits_{|\beta|=m}\beta_{jk} \left|b_{\beta}\right|\nonumber\\\leq&
\sum\limits_{m=2}^{\infty}\sum\limits_{|\beta|=m}n\left(m^2\alpha+(n-\alpha)m\right)|b_{\beta}|\nonumber
\end{align}
for all $j,k,l\in\mathbb{Z}[1,n]$.
Now using (\ref{Th5:1.0}), (\ref{Th5:1.2}) and (\ref{Th5:1.3}), we get
\begin{align*}
\Re\left(\frac{\partial h(z)}{\partial z_k}+\alpha z_l\frac{\partial^2 h(z)}{\partial z_j\partial z_k}\right)>&\sum\limits_{m=2}^{\infty}\sum\limits_{|\beta|=m}n\left(m^2\alpha+(n-\alpha)m\right)|b_{\beta}|\\\geq&
\left|\frac{\partial g(z)}{\partial z_k}+\alpha z_l\frac{\partial^2 g(z)}{\partial z_j\partial z_k}\right|
\end{align*}
for all $j,k,l\in\mathbb{Z}[1,n]$ and so $f\in \mathcal{W}_{\mathcal{H}_n^0}(\alpha)$.
\end{proof}

Finally, we show that $\mathcal{W}_{\mathcal{H}_n^0}(\alpha)$ is closed under convex combinations.

\begin{theo}\label{Th-1.6} The class $\mathcal{W}_{\mathcal{H}_n^0}(\alpha)$ is closed under convex combination.
\end{theo}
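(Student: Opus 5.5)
Take finitely many members $f_i=h_i+\ol{g_i}\in\mathcal{W}_{\mathcal{H}_n^0}(\alpha)$, $i=1,\ldots,N$, and weights $t_i\ge 0$ with $\sum_{i=1}^N t_i=1$. Set
\begin{align*}
f=\sum_{i=1}^N t_i f_i=h+\ol g,\qquad h=\sum_{i=1}^N t_ih_i,\quad g=\sum_{i=1}^N t_ig_i ;
\end{align*}
this is the decomposition of $f$ into holomorphic and co-holomorphic parts, since the $t_i$ are real. The plan is to check directly from the definition of $\mathcal{W}_{\mathcal{H}_n^0}(\alpha)$ that $f$ belongs to the class. We do not pass through Theorem \ref{Th-1.1}.

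\textbf{Normalization.} First I check that $f\in\mathcal{H}_n^0$.
\begin{itemize}
\item $f$ is pluriharmonic, being a finite linear combination of pluriharmonic functions.
\item $f(0)=\sum_i t_if_i(0)=0$.
\item $\nabla f(0)=\sum_i t_i(1,\ldots,1)=(1,\ldots,1)$.
\item $\ol\partial f(0)=\sum_i t_i\,\ol\partial f_i(0)=0$.
\end{itemize}
Equivalently, $h$ and $g$ have expansions of the form (\ref{Eq 1.7}), with coefficients $a_\beta=\sum_i t_ia_\beta^{(i)}$ and $b_\beta=\sum_i t_ib_\beta^{(i)}$.

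\textbf{Main inequality.} Fix $z\in\mathbb{P}\Delta(0;1)$ and $j,k,l\in\mathbb{Z}[1,n]$. Write
\begin{align*}
L_{jkl}[\phi](z)=\frac{\partial \phi(z)}{\partial z_k}+\alpha z_l\frac{\partial^2 \phi(z)}{\partial z_j\partial z_k}.
\end{align*}
This operator is complex linear, so $L_{jkl}[h]=\sum_i t_iL_{jkl}[h_i]$ and $L_{jkl}[g]=\sum_i t_iL_{jkl}[g_i]$. Taking real parts, applying the defining inequality for each $f_i$, using $t_i\ge0$, and then the triangle inequality gives
\begin{align*}
\Re L_{jkl}[h](z)=\sum_{i=1}^N t_i\Re L_{jkl}[h_i](z)>\sum_{i=1}^N t_i\left|L_{jkl}[g_i](z)\right|\ge\left|\sum_{i=1}^N t_iL_{jkl}[g_i](z)\right|=\left|L_{jkl}[g](z)\right|.
\end{align*}
The first inequality is strict because at least one $t_i$ is positive. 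This is exactly the membership condition, so $f\in\mathcal{W}_{\mathcal{H}_n^0}(\alpha)$.

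No real obstacle is expected. The only points needing care are the strictness of the inequality, handled by the remark that some $t_i>0$, and the fact that the weights are real. Real weights ensure that $\ol{\sum t_ig_i}=\sum t_i\ol{g_i}$, so the co-holomorphic part of $f$ is exactly $\sum_i t_ig_i$. If infinite convex combinations are intended, the same argument applies after noting that locally uniform convergence lets us differentiate term by term.
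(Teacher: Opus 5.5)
Your proposal is correct and follows essentially the same route as the paper: write $h=\sum t_s h_s$, $g=\sum t_s g_s$, check the normalization, and combine the linearity of $\phi\mapsto \frac{\partial \phi}{\partial z_k}+\alpha z_l\frac{\partial^2\phi}{\partial z_j\partial z_k}$ with the defining inequalities and the triangle inequality. Your remark that the strict inequality survives because some $t_i>0$ makes explicit a point the paper leaves implicit.
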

\begin{proof} Let $f_s=h_s+\ol g_s\in \mathcal{W}_{\mathcal{H}_n^0}(\alpha)$ for $s=1,2,\ldots,m$ and $\sum_{s=1}^{m} t_s=1\;(0\leq t_s\leq 1)$.
The convex combination of the functions $f_s$'s can be written as
\begin{align*}
f(z)=\sum\limits_{s=1}^m t_sf_s=h(z)+\ol {g(z)},
\end{align*}
where $h(z)=\sum_{s=1}^m t_s h_s(z)$ and $g(z)=\sum_{s=1}^m t_s g_s(z)$. Then both $h(z)$ and $g(z)$ are holomorphic in $\mathbb{P} \Delta(0;1)$ with $h(0)=g(0)=0$. Then	\begin{align*}
\;\left(\frac{\partial h(0)}{\partial z_1},\ldots,\frac{\partial h(0)}{\partial z_n}\right)=(1,\ldots,1)\;\text{and}\; \left(\frac{\partial g(0)}{\partial z_1},\ldots,\frac{\partial g(0)}{\partial z_n}\right)=(0,\ldots,0).
\end{align*}
	
A simple computation shows that
\begin{align*}
\Re\left(\frac{\partial h(z)}{\partial z_k}+\alpha z_l\frac{\partial^2 h(z)}{\partial z_j\partial z_k}\right)=&\Re\left(\sum\limits_{s=1}^mt_s\left(\frac{\partial h_s(z)}{\partial z_k}+\alpha z_l\frac{\partial^2 h_s(z)}{\partial z_j\partial z_k}\right)\right)\\>&
\sum\limits_{s=1}^m t_s\left|\frac{\partial g_s(z)}{\partial z_k}+\alpha z_l\frac{\partial^2 g_s(z)}{\partial z_j\partial z_k} \right|\\\geq&
\left|\frac{\partial g(z)}{\partial z_k}+\alpha z_l\frac{\partial^2 g(z)}{\partial z_j\partial z_k} \right|
\end{align*}
for all $j,k,l\in\mathbb{Z}[1,n]$ and so $f\in \mathcal{W}_{\mathcal{H}_n^0}(\alpha)$.
\end{proof}

\section{{\bf Partial sums of functions in $\mathcal{W}_{\mathcal{H}_n^0}(\alpha)$}}
Let 
\begin{align*}
f(z)=\sum\limits_{k=1}^n z_k+\sum\limits_{m=2}^{\infty}\sum\limits_{|\beta|=m}a_{\beta}z^{\beta}
\end{align*}
in $\in \mathbb{P}\Delta(0;1_n)$. Then the $p$-th partial sum (or section) of $f(z)$ is defined by
\begin{align*}
S_p(f(z))=\sum\limits_{k=1}^n z_k+\sum\limits_{m=2}^{p}\sum\limits_{|\beta|=m}a_{\beta}z^{\beta}
\end{align*}
in $\in \mathbb{P}\Delta(0;1_n)$ and $p\geq 2$. Analogously in the pluriharmonic case, the $p, q$-th partial sum (or section) of a harmonic function $f=h + \ol g$ given by (\ref{Eq 1.7}) is defined by
\begin{align*}
S_{p,q}(f)=S_p(h)+\ol{S_q(g)},
\end{align*}
where
\begin{align*}
S_p(h(z))=\sum\limits_{k=1}^n z_k+\sum\limits_{m=2}^{p}\sum\limits_{|\beta|=m}a_{\beta}z^{\beta}\;\;\text{and}\;\;S_q(g(z))=\sum\limits_{m=2}^{q}\sum\limits_{|\beta|=m}b_{\beta}z^{\beta}
\end{align*}
in $\in \mathbb{P}\Delta(0;1_n)$ and $p,q\geq 2$.\vspace{1.2mm}

In 1928 Szeg\"{o} \cite{Szego-1928} proved the remarkable result that every section $S_p(f)$ of a function $f\in\mathcal{S}$ is univalent in the disk $|z|<1/4$. That the number $1/4$ is best possible is evident from the second partial sum of the Koebe function $k(z)= z/(1-z)^2$.\vspace{1.2mm}

The study of sections of harmonic mappings is motivated by the significant interest in approximating real-valued harmonic functions by harmonic polynomials (see \cite{Walsh-1929}), owing to the many advantages such approximations offer. For instance, a harmonic function attains its maximum and minimum values on the boundary of the domain under consideration. Since planar harmonic mappings $f=h+\ol g$
defined on $\mathbb{D}$ admit a series representation, their sections can be viewed as approximations by complex-valued harmonic polynomials. Consequently, the approximation of univalent harmonic mappings by univalent harmonic polynomials may provide new insights and potential applications, particularly in problems arising in fluid dynamics and fluid flow theory.\vspace{1.2mm}

In $2013$, Li and Ponnusamy \cite{Li-Ponnusamy-2013} discussed the sections of functions in the class
\begin{align*}
\mathcal{P}^{0}_{\mathcal{H}}(\alpha)
= \left\lbrace f=h+\overline{g}\in\mathcal{H} :
	\Re\bigl(h'(z)-\alpha\bigr) > |g'(z)|,\quad z\in\mathbb{D}
	\right\rbrace
\end{align*}
and in 2015, Ponnusamy et al. \cite{Li-Ponnusamy-2015} also investigated properties of the sections of stable harmonic convex functions (see also \cite{Ponnusamy-Kaliraj-Starkov-2017}). For several other interesting results concerning sections of analytic functions, we refer the reader to \cite{Obradovic-Ponnusamy-2013, Obradovic-Ponnusamy-2014, Ponnusamy-Sahoo-Yanagihara-2014,Silverman-1988, Singh-1970}.\vspace{1.2mm}

Regarding sections of harmonic mappings, Ghosh and Vasudevarao \cite{Ghosh-Allu-2019} obtained the following result.

\begin{theoG}\emph{\cite[Theorem 6.2]{Ghosh-Allu-2019}}
Let $f \in \mathcal{W}_{\mathcal{H}}^{0}(\alpha)$. Then, for each $q \geq 2$, $S_{1,q}(f)\in \mathcal{W}_{\mathcal{H}}^{0}(\alpha)$ for $|z|< \frac{1}{2}$.
\end{theoG}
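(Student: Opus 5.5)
Theorem G is a one-variable statement: for $f=h+\ol g\in\mathcal{W}^0_{\mathcal{H}}(\alpha)$ and $q\ge 2$, the section $S_{1,q}(f)=z+\ol{S_q(g)}$ satisfies the defining inequality of $\mathcal{W}^0_{\mathcal{H}}(\alpha)$ on $|z|<1/2$. Since the holomorphic part of $S_{1,q}(f)$ is just $z$, its left-hand side is $\Re(1+\alpha z\cdot 0)=1$. So the whole task is to show
\begin{align*}
\left|S_q(g)'(z)+\alpha z\,S_q(g)''(z)\right|<1\quad\text{for } |z|<\tfrac12 .
\end{align*}
If $g(z)=\sum_{m\ge2}b_mz^m$, then
\begin{align*}
S_q(g)'(z)+\alpha zS_q(g)''(z)=\sum_{m=2}^q\bigl(m+\alpha m(m-1)\bigr)b_m z^{m-1}=\sum_{m=2}^q\bigl(\alpha m^2+(1-\alpha)m\bigr)b_mz^{m-1}.
\end{align*}

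Next I would bound the coefficients by Theorem B: $|b_m|\le 1/(\alpha m^2+m(1-\alpha))$. Each term in the sum then has modulus at most $|z|^{m-1}$, so the triangle inequality gives
\begin{align*}
\left|\sum_{m=2}^q\bigl(\alpha m^2+(1-\alpha)m\bigr)b_mz^{m-1}\right|\le\sum_{m=2}^q|z|^{m-1}<\sum_{m=1}^\infty|z|^{m}=\frac{|z|}{1-|z|}.
\end{align*}
The right-hand side is at most $1$ exactly when $|z|\le 1/2$. Hence for $|z|<1/2$ the modulus is strictly less than $1=\Re(\text{holomorphic part})$, and $S_{1,q}(f)$ is in the class on that disk. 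The normalizations $g(0)=g'(0)=0$ are inherited by $S_q(g)$, so nothing further needs checking.

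This is essentially routine, and there is no real obstacle. The only point to be careful about is strictness: it comes from truncating the series at $q$ (or from $|z|<1/2$ itself), so the strict inequality holds throughout the open disk. Theorem B is the key input, and it is what turns the weighted coefficients into a plain geometric series.
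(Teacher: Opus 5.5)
Your proof is correct and is the argument the paper uses: the paper only cites Theorem~G, but its proof of Theorem~\ref{Th7-1} at $n=1$ (where $1/(2n^5)=1/2$ and Theorem~\ref{Th-1.2} becomes Theorem~B) is exactly your steps — the holomorphic part contributes $\Re 1=1$, the coefficient bound makes each weighted term at most $|z|^{m-1}$, and the geometric sum stays below $1$ on $|z|<1/2$. One cosmetic point: your strict inequality $\sum_{m=2}^q|z|^{m-1}<|z|/(1-|z|)$ fails at $z=0$, but the strictness you need already comes from $|z|/(1-|z|)<1$ for $|z|<1/2$.
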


In this section, we introduce the sections of pluriharmonic mappings in the unit polydisk $\mathbb{P}\Delta(0;1_n)$ and obtained the following result.

\begin{theo}\label{Th7-1} Let $f=h+\ol g\in \mathcal{W}_{\mathcal{H}_n^0}(\alpha)$ with $\alpha\geq 0$ and be given by (\ref{Eq 1.7}). Then for each $q \geq 2$ and $\|z\|_{\infty}< \frac{1}{2n^5}$; $S_{1,q}(f)\in \mathcal{W}_{\mathcal{H}_n^0}(\alpha)$.	
\end{theo}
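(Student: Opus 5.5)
Our plan is to mimic the one-variable argument behind Theorem G. Since $S_{1,q}(f)=\sum_{j=1}^n z_j+\ol{S_q(g)}$, the holomorphic part is $\sum_j z_j$. For it,
\[
\frac{\partial}{\partial z_k}\Big(\sum_j z_j\Big)+\alpha z_l\frac{\partial^2}{\partial z_j\partial z_k}\Big(\sum_j z_j\Big)=1
\]
for all $j,k,l\in\mathbb{Z}[1,n]$. The normalizations $S_{1,q}(f)(0)=0$, $\nabla S_{1,q}(f)(0)=(1,\ldots,1)$ and $\ol\partial S_{1,q}(f)(0)=0$ are automatic, because $g$ starts at degree $2$. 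So by the definition of $\mathcal{W}_{\mathcal{H}_n^0}(\alpha)$, it suffices to show that
\[
\left|\frac{\partial S_q(g)(z)}{\partial z_k}+\alpha z_l\frac{\partial^2 S_q(g)(z)}{\partial z_j\partial z_k}\right|<1
\]
for all $j,k,l$ and all $\|z\|_\infty<\frac{1}{2n^5}$.

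\textbf{Step 1: termwise bound.} As in (\ref{Th2:1.5a})--(\ref{Th2:1.5}) and (\ref{Th5:1.3}), for $\|z\|_\infty=r$ we bound the left side by
\[
\sum_{m=2}^{q}\sum_{|\beta|=m}(\beta_k+\alpha\beta_{jk})|b_\beta|\,r^{m-1}.
\]
This uses that each monomial appearing has degree $m-1$ in $z$. Since $\beta_k\le m$ and $\beta_{jk}\le m(m-1)$, the inner factor is at most $m+\alpha m(m-1)=\alpha m^2+(1-\alpha)m$.

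\textbf{Step 2: coefficient estimate.} We use Theorem \ref{Th-1.2},
\[
\sum_{|\beta|=m}|b_\beta|\le \frac{\binom{m+n-1}{n-1}n^2}{\alpha m^2+(n-\alpha)m}.
\]
Because $\alpha m^2+(n-\alpha)m\ge \alpha m^2+(1-\alpha)m$ for $n\ge1$, the ratio of the two denominators is at most $1$. This gives
\[
\left|\frac{\partial S_q(g)}{\partial z_k}+\alpha z_l\frac{\partial^2 S_q(g)}{\partial z_j\partial z_k}\right|\le n^2\sum_{m=2}^{q}\binom{m+n-1}{n-1}r^{m-1}.
\]

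\textbf{Step 3: summing the series (the main obstacle).} The binomial factor grows polynomially in $m$ with degree $n-1$, and the radius $\frac{1}{2n^5}$ must absorb it. We would use the crude bound $\binom{m+n-1}{n-1}\le n^{m}$, valid since it counts monomials of degree $m$ in $n$ variables and $\le (m+1)^{n-1}$ is also usable. With this, the bound becomes
\[
n^2\sum_{m\ge2}n^m r^{m-1}=n^3\sum_{m\ge2}(nr)^{m-1}=\frac{n^4 r}{1-nr}.
\]
For $r<\frac{1}{2n^5}$ we have $nr<\frac{1}{2n^4}\le\frac12$, so this is less than $\frac{n^4}{2n^5}\cdot 2=\frac1n\le 1$, giving strict inequality uniformly in $q$.

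If the exact constant $\frac{1}{2n^5}$ is meant to come out more naturally, we would alternatively bound $\binom{m+n-1}{n-1}\le n^{m-1}\cdot n$ and keep the factor $n$ from summing over $l$. The slack in the exponent $5$ suggests that any such crude estimate suffices. The care needed is only in checking that the $m$-dependence from $\beta_k+\alpha\beta_{jk}$ cancels against the denominator in Theorem \ref{Th-1.2}, which holds for every $\alpha\ge0$. Since the estimate is independent of $q$, the conclusion holds for each $q\ge2$.
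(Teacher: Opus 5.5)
Your proof is correct and follows the same route as the paper. The holomorphic part contributes exactly $1$, the co-holomorphic part is bounded termwise using $\|z\|_\infty^{m-1}$, and Theorem \ref{Th-1.2} together with $\binom{m+n-1}{n-1}\le n^m$ reduces everything to a geometric series. The one difference is that you bound the weight $\beta_k+\alpha\beta_{jk}$ directly by $\alpha m^2+(1-\alpha)m$, whereas the paper replaces it by the full sum over $j,k,l$; this saves you a factor of $n$ and shows that the radius $\frac{1}{2n^5}$ has room to spare.
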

\begin{proof}
Let $f=h+\ol g\in \mathcal{W}_{\mathcal{H}_n^0}(\alpha)$, where $h$ and $g$ are given by (\ref{Eq 1.7}). Note that
\begin{align*}
	S_{1,q}(f(z))=S_1(h(z))+\ol{S_q(g(z))}=\sum\limits_{k=1}^n z_k+\ol{\sum\limits_{m=2}^q\sum_{|\beta|=m}b_{\beta}z^{\beta}}.
\end{align*}

Observe that
\begin{align*}
	\frac{\partial S_1(h(z))}{\partial z_k}+\alpha z_l
	\frac{\partial^2 S_1(h(z))}{\partial z_j\partial z_k}=1
\end{align*}
and so 
\begin{align*}
\Re\left(\frac{\partial S_1(h(z))}{\partial z_k}+\alpha z_l
	\frac{\partial^2 S_1(h(z))}{\partial z_j\partial z_k}\right)=1
\end{align*}
for all $j,k,l\in\mathbb{Z}[1,n]$.
On the other hand, from (\ref{Th2:1.5a}) and (\ref{Th2:1.5}), we deduce that
\begin{align}\label{Th7:1.1}
&\left|\frac{\partial S_q(g(z))}{\partial z_k}+\alpha z_l\frac{\partial^2 S_q(g(z))}{\partial z_j\partial z_k}\right|\\\leq&
\sum\limits_{m=2}^{q}\sum\limits_{|\beta|=m}\beta_{k} \left|b_{\beta}z_1^{\beta_1}\ldots z_{j-1}^{\beta_{j-1}}z_{j}^{\beta_j}z_{j+1}^{\beta_{j+1}}\ldots z_{k-1}^{\beta_{k-1}}z_{k}^{\beta_k-1}z_{k+1}^{\beta_{k+1}}\ldots z_l^{\beta_l}\ldots z_n^{\beta_n}\right|\nonumber\\&+\alpha 
\sum\limits_{m=2}^{q}\sum\limits_{|\beta|=m}\beta_{jk} \left|b_{\beta}z_1^{\beta_1}\ldots z_{j-1}^{\beta_{j-1}}z_{j}^{\beta_j^*}z_{j+1}^{\beta_{j+1}}\ldots z_{k-1}^{\beta_{k-1}}z_{k}^{\beta_k^{**}}z_{k+1}^{\beta_{k+1}}\ldots z_l^{\beta_l+1}\ldots z_n^{\beta_n}\right|\nonumber
\\\leq &
\sum\limits_{m=2}^{q}\sum\limits_{|\beta|=m}\beta_{k} \left|b_{\beta}\right|||z||_{\infty}^{m-1}+\alpha 
\sum\limits_{m=2}^{\infty}\sum\limits_{|\beta|=m}\beta_{jk} \left|b_{\beta}\right|||z||_{\infty}^{m-1}\nonumber\\\leq&
\sum\limits_{m=2}^{q}\sum\limits_{|\beta|=m}\sum\limits_{l=1}^n\sum\limits_{k=1}^{n}\sum\limits_{j=1}^n\left(\beta_{k}+\alpha \beta_{jk}\right)|b_{\beta}|||z||_{\infty}^{m-1}\nonumber\\=&
\sum\limits_{m=2}^{q}\sum\limits_{|\beta|=m}n\left(m^2\alpha+(n-\alpha)m\right)|b_{\beta}|||z||_{\infty}^{m-1}\nonumber.
\end{align}
Using Theorem \ref{Th-1.2} together with \eqref{Th7:1.1}, we obtain
\begin{align*}
 \left|\frac{\partial S_q(g(z))}{\partial z_k}+\alpha z_l\frac{\partial^2 S_q(g(z))}{\partial z_j\partial z_k}\right|\leq& \sum\limits_{m=2}^q \binom{m+n-1}{n-1}n^3||z||_{\infty}^{m-1}\\\leq&
 \sum\limits_{m=2}^q n^{m+3}||z||_{\infty}^{m-1}\\<&
 \sum\limits_{m=2}^q  \frac{n^{m+3}}{2^{m-1}n^{5(m-1)}}\\\leq&
 \sum\limits_{m=2}^q \frac{1}{2^{m-1}}\\<& 1=\Re\left(\frac{\partial S_1(h(z))}{\partial z_k}+\alpha z_l
 \frac{\partial^2 S_1(h(z))}{\partial z_j\partial z_k}\right)
 \end{align*}
 for all $j,k,l\in\mathbb{Z}[1,n]$. Therefore $S_{1,q}(f)\in \mathcal{W}_{\mathcal{H}_n^0}(\alpha)$.
\end{proof}

\vspace{5mm}

\noindent\textbf{Conflict of interest:} The authors declare that there is no conflict  of interest regarding the publication of this paper.\vspace{1.2mm}

\noindent {\bf Funding:} Not Applicable.\vspace{1.2mm}

\noindent\textbf{Data availability statement:}  Data sharing not applicable to this article as no datasets were generated or analysed during the current study.\vspace{1.2mm}

\noindent {\bf Authors' contributions:} All the authors have equal contributions in preparation of the manuscript.

\end{document}